\title{On zero-cycles of varieties over Laurent fields}
\author{Jan Lange}
\address{Institute of Algebraic Geometry \\ Leibniz University Hannover \\ Welfengarten 1 \\ 30167 Hannover, Germany}
\email{\href{mailto:lange@math.uni-hannover.de}{lange@math.uni-hannover.de}}
\date{February 22, 2024}
\subjclass[2020]{Primary 14C25; Secondary 14M22, 14M25}
\keywords{algebraic cycles, zero-cycles, decomposition of the diagonal, toric geometry}
\newtheorem{theorem}{Theorem}[section]
\newtheorem{example}[theorem]{Example}
\newtheorem{proposition}[theorem]{Proposition}
\newtheorem{cor}[theorem]{Corollary}
\newtheorem{lemma}[theorem]{Lemma}
\newtheorem*{theorem*}{Theorem}
\newtheorem{remark}[theorem]{Remark}
\newtheorem{setup}[theorem]{Setup}
\theoremstyle{definition}
\newtheorem{definition}[theorem]{Definition}
\newtheorem{construction}[theorem]{Construction}
\newcolumntype{C}{>{$}c<{$}}
\DeclareMathOperator{\Azero}{A_0}
\DeclareMathOperator{\blowup}{Bl}
\DeclareMathOperator{\Brauer}{Br}
\DeclareMathOperator{\CH}{CH}
\DeclareMathOperator{\charac}{char}
\DeclareMathOperator{\Cone}{Cone}
\DeclareMathOperator{\Homo}{Hom}
\DeclareMathOperator{\Ima}{Im}
\DeclareMathOperator{\mult}{mult}
\DeclareMathOperator{\pr}{pr}
\DeclareMathOperator{\Spec}{Spec}
\DeclareMathOperator{\Wall}{RelWall}
\newcommand{\N}{\mathbb{N}}
\newcommand{\Z}{\mathbb{Z}}
\newcommand{\Q}{\mathbb{Q}}
\newcommand{\R}{\mathbb{R}}
\newcommand{\C}{\mathbb{C}}
\newcommand{\aff}{\mathbb{A}}
\newcommand{\proj}{\mathbb{P}}
\newcommand{\restr}[1]{\left. #1 \right|} 
\newcommand{\innerpro}[2]{\left\langle #1, #2 \right\rangle}
\numberwithin{equation}{section}
\begin{document}

\begin{abstract}
    We generalize a recent result of Pavic--Schreieder regarding the surjectivity of the obstruction morphism defined in \cite{PS23}. As a consequence of this result, we show that geometrically (retract) rational varieties over a Laurent field of characteristic 0, which admit a strictly semi-stable model, have trivial Chow group of zero-cycles. Our key new ingredient comes from toric geometry.
\end{abstract}

\maketitle

\section{Introduction}

Kontsevich--Tschinkel \cite{KT19} and Nicaise--Shinder \cite{NS19} study the behaviour of rationality and stable rationality in families in characteristic 0. In particular for degenerations of a smooth projective variety, they construct a motivic obstruction to (stable) rationality which depends only on the special fibre of the degeneration. This approach was successfully applied by Nicaise--Ottem \cite{NO22} to show the stable irrationality of quartic fivefolds and several complete intersections by reducing to previously known irrationality results \cite{HPT18,Sch19JAMS}. Building on \cite{Sch19JAMS}, \cite{Moe23} uses the approach in \cite{NO22} to improve Schreieder's logarithmic bound for stable irrational hypersurfaces in characteristic $0$. 

Motivated by the cycle-theoretic approaches to stable rationality in \cite{Voi15,CTP16,Sch19Duke}, Pavic--Schreieder \cite{PS23} introduce a Chow-theoretic analogue of the motivic approach for strictly semi-stable schemes $\mathfrak{X}$ over a dvr $R$. These are regular, flat, and proper $R$-schemes whose special fibre is a geometrically reduced simple normal crossing divisor $Y = \bigcup_{i \in I} Y_i$ with irreducible components $Y_i$. For each such $\mathfrak{X} \to \Spec R$ they consider the complex
\begin{equation}\label{eq:complex}
    \bigoplus\limits_{j \in I} \CH_1(Y_j) \overset{\Phi}{\longrightarrow} \bigoplus\limits_{i \in I} \CH_0(Y_i) \xrightarrow[]{\sum_i \deg} \Z \longrightarrow 0,
\end{equation}
where $\Phi = \sum\limits_{i \in I} \sum\limits_{j \in I} \iota_i^\ast (\iota_j)_\ast$ with $\iota_i \colon Y_i \hookrightarrow \mathfrak X$ the natural inclusion  for $i \in I$.

\begin{theorem}\label{thm:main}
    Let $R$ be a discrete valuation ring with algebraically closed residue field $k$, and let $\mathfrak{X} \to \Spec R$ be a strictly semi-stable projective $R$-scheme. Assume that the geometric generic fibre admits a decomposition of the diagonal (e.g. is  retract rational). Then the complex \eqref{eq:complex} is exact after base-change to any field extension $L/k$, i.e. the complex \eqref{eq:complex} is exact for the strictly semi-stable family $\mathfrak{X} \times_{R} A \to \Spec A$, where $A/R$ is any unramified extension of dvr's with induced extension $L/k$ of residue fields.
\end{theorem}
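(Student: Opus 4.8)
The plan is to prove exactness of \eqref{eq:complex} for $\mathfrak{X}_A := \mathfrak{X}\times_R A$ by relating it to zero-cycles on the generic fibre, feeding in the decomposition of the diagonal over a finite (possibly ramified) extension of the base, and repairing the resulting failure of semi-stability by a toric construction. First some harmless reductions. Since $k$ is algebraically closed, each $Y_i$ is geometrically integral, so the components $Y^{(i)} := Y_i\times_k L$ of the new special fibre are again indexed by $I$, and $\mathfrak{X}_A\to\Spec A$ (which is still regular, $A/R$ being unramified) is again strictly semi-stable and projective; thus the hypotheses, and hence the claim, are stable under the base change. Exactness at $\Z$ is immediate, since each $Y^{(i)}$ has an $L$-point, and $\sum_i\deg\circ\Phi=0$ is the relation $\sum_i c_1(\mathcal{O}_{\mathfrak{X}_A}(Y^{(i)})) = c_1(\mathcal{O}_{\mathfrak{X}_A}(\operatorname{div}\varpi))=0$ for a uniformizer $\varpi$ of $A$. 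So it remains to prove exactness at $\bigoplus_i\CH_0(Y^{(i)})$.

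For this I would, following the obstruction-morphism construction of \cite{PS23}, introduce a specialization homomorphism. Write $F=\operatorname{Frac}(A)$. Given a closed point $\xi$ of the generic fibre $X_F$, its closure in $\mathfrak{X}_A$ is a horizontal curve, finite and flat over $A$; after a preliminary moving step one may assume it meets $Y_L$ in the smooth locus, so that its special fibre defines a class $\operatorname{sp}(\xi)\in\bigoplus_i\CH_0(Y^{(i)})$ of degree $[\kappa(\xi):F]$, and one checks as in \cite{PS23} that this descends to a homomorphism $\operatorname{sp}\colon\CH_0(X_F)\to\bigl(\bigoplus_i\CH_0(Y^{(i)})\bigr)\big/\Ima\Phi$ compatible with the degree maps. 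Exactness at $\bigoplus_i\CH_0(Y^{(i)})$ then follows from two assertions: (a) the image of $\operatorname{sp}$ contains $\ker(\sum_i\deg)/\Ima\Phi$; and (b) $\operatorname{sp}$ annihilates $\Azero(X_F)$. Indeed, given $\bar\alpha\in\ker(\sum_i\deg)/\Ima\Phi$, by (a) $\bar\alpha=\operatorname{sp}(\beta)$ for some $\beta\in\CH_0(X_F)$, and applying $\sum_i\deg$ gives $\deg\beta=0$, so $\beta\in\Azero(X_F)$ and $\bar\alpha=0$ by (b). Assertion (a) is a lifting and moving argument: every smooth point $y\in Y^{(i)}$ of $Y_L$ is cut out transversally by some horizontal curve, whose excess intersection with the remaining components is, by the very shape of $\Phi=\sum_{i,j}\iota_i^\ast(\iota_j)_\ast$, absorbed modulo $\Ima\Phi$; the same curves serve to change the degrees of representatives. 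This part should be essentially as in \cite{PS23}.

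The substance is assertion (b), and here the hypothesis and the toric input enter. Since the geometric generic fibre admits a decomposition of the diagonal, $\Azero$ is universally trivial over $\overline{K}$, hence over $\overline{F}\supseteq\overline{K}$; so for $\beta\in\Azero(X_F)$ there is a finite extension $F'/F$ with $\beta_{F'}=0$ in $\CH_0(X_{F'})$. Let $A'$ be the normalization of $A$ in $F'$. If $A'/A$ is unramified, $\mathfrak{X}_A\times_A A'$ is still strictly semi-stable, $\operatorname{sp}$ is compatible with the base change $A\to A'$, and $\operatorname{sp}(\beta)$ already lies in $\Ima\Phi$ over $A$. In general $A'/A$ is ramified and $\mathfrak{X}_A\times_A A'$ is no longer strictly semi-stable: \'etale-locally along a codimension-$c$ stratum it acquires the toric singularity $\{x_1\cdots x_c=\varpi^e\}$, $e$ the ramification index. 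To proceed one needs a strictly semi-stable model $\mathfrak{X}'\to\mathfrak{X}_A\times_A A'$, and the point is that such a model is produced \emph{torically}: the new irreducible components and deeper strata of the special fibre of $\mathfrak{X}'$ are (open subsets of) toric varieties, governed by regular subdivisions of the standard simplices attached to the strata of $Y_L$. The essential toric facts are that these new strata are rational, so that their Chow groups of zero-cycles are universally trivial, and that their groups of $1$-cycles, together with the combinatorics of the subdivision, can be matched against the Gysin maps occurring in $\Phi$. This is exactly what is needed to show that the specialization maps for $\mathfrak{X}_A$ and for $\mathfrak{X}'$ are compatible with push-forward along $\mathfrak{X}'\to\mathfrak{X}_A\times_A A'$, the extra zero-cycles created on the new toric strata being swallowed by the images of $\Phi$. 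Running the computation of (b) on $\mathfrak{X}'$, where $\beta$ is now rationally trivial, and pushing down, then yields $\operatorname{sp}(\beta)\in\Ima\Phi$ on $\mathfrak{X}_A$.

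I expect the main obstacle to be precisely this last transfer: controlling the behaviour of the three-term complex \eqref{eq:complex}, and of the specialization map, simultaneously under the ramified base change $A\to A'$ and under the toric semi-stable resolution, tracking $\CH_0$, $\CH_1$, the strata, and the maps $\Phi$ through the subdivision. This is where the toric geometry does genuine work, replacing the more hands-on arguments available in the restricted setting of \cite{PS23}.
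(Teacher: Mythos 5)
Your plan correctly identifies the overall shape of the problem — pass to a finite extension where the diagonal decomposes, repair semi-stability by a toric resolution, and transfer the conclusion back — but it stops short of the argument precisely at the step that carries all the weight, and it also contains a substantive error in the ``easy'' unramified case.

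The structural difference from the paper first: you propose to rebuild a specialization homomorphism $\operatorname{sp}\colon\CH_0(X_F)\to\bigl(\bigoplus_i\CH_0(Y^{(i)})\bigr)/\Ima\Phi$ and run the Pavic--Schreieder argument through the ramified base change. The paper instead treats \cite[Theorem 1.2(1)]{PS23} as a black box supplying exactness of the three-term complex for the resolved model $\Tilde{\mathfrak X}\to\Spec\Tilde R$, and then proves a purely cycle-theoretic \emph{descent} statement (\Cref{thm:mainresult1}): exactness of the complex for $\Tilde{\mathfrak X}$ implies exactness for $\mathfrak X$. No specialization map appears; the comparison is carried out directly on the map $\Phi$.

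The genuine gap is your assertion (b) together with the sentence ``the extra zero-cycles created on the new toric strata being swallowed by the images of $\Phi$.'' You flag this yourself as ``the main obstacle,'' and it is; but it is not a detail to be filled in, it is the theorem. The paper's proof of this point occupies \Cref{sec:toricgeometry,sec:CH1resolution,sec:analysis}: one encodes the combinatorics of the special fibre of $\mathfrak X$ and of the resolution $\Tilde{\mathfrak X}$ as a refinement of abstract simplicial complexes $\psi\colon\mathcal C_{\Tilde{\mathfrak X}}\to\mathcal C_{\mathfrak X}$ (\Cref{const:refinement}), describes $\CH_1(\Tilde Y)$ explicitly in terms of $\CH_1$ of strata of $Y$ and of the relative walls of $\psi$ (\Cref{prop:CH1}), introduces a distance function $d(\Tilde v,v)$ and an intersection function $I_{\tau}(\Tilde v)$ read off from the toric wall relation (\Cref{const:distance,const:intersectionnumber,lem:wallrelation}), and proves the key identity
$\Phi_{\mathfrak X,Y_v}(q_\ast\Tilde\gamma)=\sum_{\Tilde v}(r-d(\Tilde v,v))\,q_\ast\Phi_{\Tilde{\mathfrak X},\Tilde Y_{\Tilde v}}(\Tilde\gamma)$
(\Cref{prop:key}). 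It is exactly this formula that makes precise — and true — the claim that the contributions of the new toric strata cancel modulo $\Ima\Phi$. Without some version of this identity your assertion (b) has no proof.

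Second, your claim that in the \emph{unramified} case ``$\operatorname{sp}(\beta)$ already lies in $\Ima\Phi$ over $A$'' is not correct as stated. If $A'/A$ is unramified with residue extension $L'/L$ and $\beta_{F'}=0$ in $\CH_0(X_{F'})$, then pushing forward gives only $[F':F]\cdot\operatorname{sp}(\beta)=[L':L]\cdot\operatorname{sp}(\beta)\in\Ima\Phi$, not $\operatorname{sp}(\beta)\in\Ima\Phi$. The paper addresses precisely this pitfall (see the remark following \Cref{thm:mainresult1} and \Cref{rem:counterexample}): the residue field of $R$ being algebraically closed is used to arrange, after completing $R$, that the finite extension $\Tilde R/R$ where the diagonal decomposes has \emph{trivial} residue extension, so no multiplicative factor appears; the extension to an arbitrary $L'/k$ is then an unramified base change performed afterwards, on both $R$ and $\Tilde R$ simultaneously (\Cref{rem:slightgeneralization}). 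Your argument collapses these two base changes, which is where the factor $[L':L]$ would sneak in.

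Finally, a small point: exactness at $\Z$ means surjectivity of $\sum_i\deg$; the relation $\sum_i\deg\circ\Phi=0$ that you write down is the complex property at the middle term, not exactness at $\Z$. Surjectivity does hold, as you say, because each $Y_i$ has a $k$-point and hence each $(Y_i)_L$ has an $L$-point.
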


The theorem implies that the complex \eqref{eq:complex} is exact modulo $m$ for all $m \in \N$, as the tensor product is right exact. We would like to emphasize that the assumption on the residue field of $R$ being algebraically closed is crucial, see \Cref{rem:counterexample}. \Cref{thm:main} improves \cite[Theorem 1.2 (2)]{PS23} which shows the exactness of the complex tensored with $\Z/2$ if the special fibre is a chain of Cartier divisors. We can remove the restriction to $\Z/2$-coefficients and allow an arbitrary configuration of the special fibre in the strictly semi-stable family. The theorem of Pavic--Schreieder implies the (retract) irrationality of very general quartic fivefolds \cite[Theorem 1.1]{PS23} and of very general complete intersections of two cubics in $\proj^7$ \cite[Theorem 1.1]{LS23}. One can hope to approach the other complete intersections in \cite{NO22} with \Cref{thm:main}. 

As a consequence of \Cref{thm:main}, we study in this paper the Chow group of zero-cycles of geometrically rational varieties over Laurent fields $k((t))$ where $k$ is algebraically closed. Colliot-Th\'{e}l\`{e}ne shows in \cite[Theorem A (iv)]{CT83} that the Chow group of degree 0 zero-cycles is trivial for geometrically rational surfaces over fields of characteristic $0$ and cohomological dimension at most $1$. In particular \cite{CT83} applies to Laurent fields in characteristic $0$. More generally, Tian considers rationally connected varieties over Laurent fields in characteristic $0$ and shows the triviality of the Chow group of zero-cycles unconditionally in dimension at most $3$ and in all dimensions if the Tate conjecture for surfaces holds, see \cite[Theorem 1.1]{Ti20}. We obtain a partial, but unconditional result in this direction and a similar result in positive characteristic.

\begin{cor}\label{cor:main}
    Let $X$ be a smooth, projective, geometrically rational variety over a Laurent field $k((t))$, which admits a strictly semi-stable projective model $\mathfrak{X} \to \Spec k[[t]]$. Assume that $k$ is algebraically closed. Then the following holds:
    \begin{enumerate}[label=(\roman*)]
        \item The degree map $\deg \colon \CH_0(X) \to \Z$ is an isomorphism if $\charac k = 0$.
        \item If $p = \charac k > 0$, then $\deg \colon \CH_0(X)/l \to \Z/l$ is an isomorphism for every $l$ coprime to $p$.
    \end{enumerate}
\end{cor}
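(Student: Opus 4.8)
The plan is to deduce \Cref{cor:main} from \Cref{thm:main} by comparing the complex \eqref{eq:complex} for $\mathfrak X \to \Spec R$ (with $R = k[[t]]$ and $K = k((t))$) with $\CH_0(X)$, where $X$ denotes the generic fibre.

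First, $\deg \colon \CH_0(X) \to \Z$ is surjective: since $\mathfrak X$ is strictly semi-stable, $\mathfrak X \setminus \mathrm{Sing}(Y)$ is smooth over $R$ with special fibre $\bigsqcup_i \bigl( Y_i \setminus \bigcup_{j \neq i} Y_j \bigr)$, which has a $k$-point as $k$ is algebraically closed; Hensel's lemma lifts it to an $R$-point of $\mathfrak X$, hence to a $K$-rational point of $X$. Next, the geometric generic fibre $X_{\overline K}$ is rational because $X$ is geometrically rational, and a smooth projective rational variety admits a decomposition of the diagonal. Hence \Cref{thm:main} applies (with $A = R$, $L = k$): the complex \eqref{eq:complex} is exact, so $\sum_i \deg \colon \mathrm{coker}(\Phi) \to \Z$ is an isomorphism, and, $\otimes\, \Z/l$ being right exact, so is $\mathrm{coker}(\Phi)/l \to \Z/l$ for every $l$.

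The heart of the proof is to produce a surjective, degree-compatible homomorphism $\overline\lambda \colon \mathrm{coker}(\Phi) \to \CH_0(X)$ (where a closed point of $Y_i$ has degree $1$). I would first build $\lambda \colon \bigoplus_i \CH_0(Y_i) \to \CH_0(X)$ by sending a closed point $y$ of $Y_i$, moved into the smooth locus $Y_i \setminus \bigcup_{j \neq i} Y_j$ of $\mathfrak X/R$ and then lifted by smoothness to $\widetilde y \in X(K)$, to $[\widetilde y]$; using that $\mathfrak X$ is regular one checks that $[\widetilde y]$ is independent of the lift and respects rational equivalence on $Y_i$. This $\lambda$ factors through $\CH_0(Y)$ and kills $\Ima(\Phi)$: the contributions of $\Phi([C])$ (for $C \subset Y_j$ a curve) to the various $\CH_0(Y_i)$ are supported on the intersections $Y_i \cap Y_j$ and cancel, because $\mathcal O\bigl(\sum_i Y_i\bigr) = \mathcal O(\mathrm{div}\, t)$ is trivial on $\mathfrak X$, so $[C] \cdot Y_j|_{Y_j} = -\sum_{i \neq j} [C \cdot (Y_i \cap Y_j)]$. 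Thus $\lambda$ descends to $\overline\lambda$. For surjectivity of $\overline\lambda$ one shows that any closed point $x \in X$ with residue field $K'$ satisfies $[x] = [K' : K] \cdot [\widetilde y]$ in $\CH_0(X)$ for a suitable lift: after moving, the specialization of $x$ lies in the smooth locus of $\mathfrak X/R$, so $\mathfrak X \times_R R'$ (with $R'$ the valuation ring of $K'$) is smooth over $R'$ there; one then "straightens" the associated $R'$-section to a constant one to get $[x \times_K K'] = [\widetilde y \times_K K']$ in $\CH_0(X_{K'})$ and pushes forward along $X_{K'} \to X$.

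Granting this, $\overline\lambda$ is a surjection whose composite with $\deg$ is the isomorphism $\sum_i \deg \colon \mathrm{coker}(\Phi) \xrightarrow{\sim} \Z$; hence $\overline\lambda$, and therefore $\deg \colon \CH_0(X) \to \Z$, is an isomorphism, which is \Cref{cor:main}(i). In characteristic $p$ the same reasoning carried out after inverting $p$ gives $\CH_0(X)[1/p] \cong \Z[1/p]$ and hence \Cref{cor:main}(ii). The main obstacle is the construction of $\overline\lambda$: the well-definedness of the lifting map modulo rational equivalence, and above all its surjectivity via the straightening of sections of $\mathfrak X \times_R R'$ near a smooth point of the special fibre. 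That last point is precisely where the characteristic matters — for wildly ramified $K'/K$ the ramification index is not invertible and $\mathfrak X \times_R R'$ is too singular for the straightening to succeed integrally, so in characteristic $p$ one obtains the identity $[x] = [K':K] \cdot [\widetilde y]$ only after inverting $p$, which is the source of the prime-to-$p$ restriction in \Cref{cor:main}(ii).
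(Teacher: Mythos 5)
Your approach is genuinely different from the paper's, and unfortunately its central construction has gaps that I do not see how to close. The paper never attempts to build a map $\mathrm{coker}(\Phi_{\mathfrak X}) \to \CH_0(X)$. Instead it compares Fulton's localization exact sequence
$$
    \CH_1(Y) \longrightarrow \CH_1(\mathfrak X) \longrightarrow \CH_0(X) \longrightarrow 0
$$
with the complex \eqref{eq:complex} after tensoring with $\Z/l$, using the bijectivity theorem of Saito--Sato \cite[Corollary 0.9]{SS10} for the \'etale cycle class map, which identifies $\CH_1(\mathfrak X)/l$ with $\bigl(\bigoplus_i \Z/l \cdot [Y_i]\bigr)^\vee$ for $l$ coprime to $\charac k$. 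A diagram chase then shows that $\deg \colon \CH_0(X)/l \to \Z/l$ is injective, and combining this with the fact that $\Azero(X)$ is torsion (coming from the rational decomposition of the diagonal) yields both parts of the corollary. The prime-to-$p$ hypothesis is exactly the hypothesis $\ell \neq p$ in Saito--Sato, not an artifact of wild ramification of finite extensions of $K$.

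The main problem with your proposal is the map $\lambda$. A closed point $y$ in the smooth locus of $\mathfrak X/R$ does lift to a section by Hensel's lemma, but the lift is far from unique, and two lifts $\tilde y_1, \tilde y_2 \in X(K)$ reducing to the same $y$ are in general \emph{not} rationally equivalent; regularity of $\mathfrak X$ alone cannot force this. (Consider the constant family $E \times_k R$ for $E/k$ an elliptic curve: the constant section through $0$ and a nonconstant $K$-point of $E$ specialising to $0$ are distinct in $\CH_0(E_K)$.) This has nothing to do with the geometric rationality hypothesis, so if such a $\lambda$ were well-defined for all strictly semi-stable models it would be contradictory. Making it well-defined would require something like rational chain connectedness of the fibres together with Koll\'ar's specialization theorem \cite{Kol04}, which is a substantial ingredient that your proposal does not invoke. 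The "straightening" step for surjectivity has the same problem: asserting $[x \times_K K'] = [\tilde y \times_K K']$ in $\CH_0(X_{K'})$ requires producing an explicit rational equivalence between two $R'$-sections of $\mathfrak X_{R'}$ passing through a common smooth point of the special fibre, and smoothness of $\mathfrak X_{R'}$ there does not provide one. In short, both the well-definedness and the surjectivity of $\overline\lambda$ are unproved, and the localization-plus-Saito--Sato route the paper takes is what replaces them.
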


Over $\C((t))$, the result is known more generally for rationally simply connected varieties by \cite[Theorem 1.5]{Pir11}. For smooth families $\mathfrak X \to \Spec k[[t]]$, \Cref{cor:main} (i) follows from \cite[Theorem 2 (2)]{Kol04}.

\begin{remark}
    \Cref{cor:main} holds over fraction fields of excellent, henselian discrete valuation rings with algebraically closed residue field $k$, see \Cref{cor:applicationCH0}.
\end{remark}

We briefly sketch the strategy for the two results: The assumption of \Cref{thm:main} ensures that the generic fibre of $\mathfrak X \to \Spec R$ admits a decomposition of the diagonal after a finite field extension. This implies by \cite[Theorem 1.2 (1)]{PS23} that the complex \eqref{eq:complex} is exact for a suitable resolution $\Tilde{\mathfrak X} \to \Spec \Tilde{R}$ after a finite base change $\Tilde{R}/R$. From this we aim to deduce the exactness over $R$. The issue is that the suitable resolution $\Tilde{\mathfrak{X}} \to \Spec \Tilde{R}$ has many more components in the special fibre, so the associated complexes \eqref{eq:complex} are quite different. We describe the suitable resolution $\Tilde{\mathfrak{X}}$ \'{e}tale locally by toric geometry and obtain a global description via simplicial complexes. This way we can reduce \Cref{thm:main} to a combinatorial problem which we solve in \Cref{prop:key}.

\Cref{cor:main} follows from \Cref{thm:main} via Fulton's localization exact sequence
$$
    \CH_1(Y) \longrightarrow \CH_1(\mathfrak{X}) \longrightarrow \CH_0(X) \longrightarrow 0
$$
by using Saito-Sato's \cite{SS10} bijectivity result for the {\'e}tale cycle class map.

Some preliminaries are given in \Cref{sec:Preliminaries}. In \Cref{sec:toricgeometry}, we briefly recall some toric intersection theory and provide a description of a particular affine toric singularity and its resolution. The main technical aspects of this paper are contained in \Cref{sec:CH1resolution}, where we introduce \emph{refinements of abstract simplicial complexes} and use it to describe the components of the resolution and one-cycles on the special fibre. In \Cref{sec:analysis}, we use the results from \Cref{sec:CH1resolution} to compare the complex \eqref{eq:complex} with the complex of a suitable resolution after a finite base change by constructing two auxiliary functions. This enables us to prove \Cref{thm:main} in \Cref{sec:mainresult}, where we also explain the argument for \Cref{cor:main}. In \Cref{appendix}, we provide two concrete examples, for which we illustrate the key proposition - \Cref{prop:key} - by a direct computation. We achieve this by explicitly spelling out the constructions and arguments from \Cref{sec:CH1resolution,sec:analysis} for each example.

\section*{Acknowledgement}

I am grateful to my advisor Stefan Schreieder for his support and his many helpful suggestions regarding this work. I thank Jean-Louis Colliot-Th\'{e}l\`{e}ne for his comments. The author is supported by the Studienstiftung des deutschen Volkes. This project received partial funding from the European Research Council (ERC) under the European Union’s Horizon 2020 research and innovations programe under grant agreement No. 948066 (ERC - StG RationAlgic).

\section{Preliminaries}\label{sec:Preliminaries}

\subsection{Notations}
    Let $k$ be a field. A \emph{$k$-variety} (or \emph{variety}) $X$ is an integral, separated scheme of finite type over $k$. The base-change of a separated scheme $X$ over a ring $R$ to some ring extension $A/R$ is denoted by $$
        X_A := X \times_R A := X \times_{\Spec R} \Spec A.
    $$
    
    Let $Y$ be a scheme of finite type over a field $k$. The \emph{group of $l$-cycles} of $Y$ is the free abelian group generated by subvarieties of dimension $l$. The \emph{Chow group of $l$-cycles} $\CH_l(Y)$ of $Y$ is the quotient of the group of $l$-cycles by rational equivalence.

    For an abelian group $G$ and an integer $l \in \Z_{\geq 0}$, we denote $G \otimes_\Z \Z/l\Z$ by $G/l$.

\subsection{Decomposition of the diagonal}

A $k$-variety $X$ of dimension $n$ admits a (Chow-theoretic) decomposition of the diagonal, if the diagonal $\Delta_X \subset X \times_k X$ satisfies
$$
    [\Delta_X] = [X \times z] + [Z] \in \CH_n(X \times_k X),
$$
where $z$ is a zero-cycle in $X$ of degree $1$ and $Z$ is an $n$-cycle on $X \times_k X$ which does not dominate the first factor. For example, retract rational varieties admit a decomposition of the diagonal, see e.g. \cite[Lemma 7.5]{Sch21}.

\subsection{Strictly semi-stable families}\label{sec:strictlysemistable}

Let $R$ be a discrete valuation ring. A flat, proper, and regular $R$-scheme $\mathfrak{X} \to \Spec R$ is called \emph{strictly semi-stable}, if the special fibre $\mathfrak{X} \times_R k$ is a geometrically reduced, simple normal crossing divisor, i.e. every component of the special fibre is a smooth Cartier divisor in $\mathfrak{X}$ and the scheme-theoretic intersection of the components is either empty or smooth of the expected codimension.

Let $R$ be a discrete valuation ring and let $R \subset \Tilde{R}$ be an unramified extension of discrete valuation rings, i.e. an extension $R \to \Tilde{R}$ such that $m_{\Tilde{R}} = m_R \Tilde{R}$ where $m_R$ and $m_{\Tilde{R}}$ are the maximal ideal of $R$ and $\Tilde{R}$, respectively. Then for every strictly semi-stable $R$-scheme $\mathfrak{X}$, the base change $\mathfrak{X}_{\Tilde{R}} = \mathfrak{X} \times_R \Tilde{R}$ is a strict semi-stable $\Tilde{R}$-scheme by \cite[Proposition 1.3]{Har01}. For finite ramified extension of dvr's the base-change $\mathfrak{X}_{\Tilde{R}}$ becomes a strictly semi-stable $\Tilde{R}$-scheme after a finite sequence of blow-ups.

\begin{proposition}[{\cite[Proposition 2.2]{Har01}}]\label{prop:Hartlresolution}
    Let $R \subset \Tilde{R}$ be a finite ramified extension of discrete valuation rings. Let $\mathfrak{X}$ be a strictly semi-stable $R$-scheme with special fibre $Y$. Assume that the irreducible components of $Y$ are geometrically integral. Then there exists a finite sequence of blow-ups $\Tilde{\mathfrak{X}} := V^m \to V^{m-1} \to \dots \to V^0 =: \mathfrak{X}_{\Tilde{R}}$ such that $\Tilde{\mathfrak{X}}$ is a strictly semi-stable $\Tilde{R}$-scheme and the center of each blow-up $V_{i+1} \to V_i$ is an irreducible component of the special fibre of $V_i$.
\end{proposition}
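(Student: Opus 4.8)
The plan is to reduce the statement to a local, combinatorial problem about toric resolutions and to solve that problem by induction on a singularity invariant. First I would pin down what fails after base change. The scheme $\mathfrak X_{\tilde R}$ is still flat and proper over $\tilde R$, with special fibre again a reduced simple normal crossing divisor (the base change of $Y$); only regularity of the total space can fail. Writing $e$ for the ramification index of $\tilde R/R$ and $\pi,\tilde\pi$ for uniformizers, one can choose, étale-locally around a point of the special fibre that lies on exactly the components $Y_{i_1},\dots,Y_{i_r}$, coordinates so that $\mathfrak X=\Spec R[x_1,\dots,x_n]/(x_1\cdots x_r-\pi)$ with $Y_{i_j}=V(x_j,\pi)$; hence $\mathfrak X_{\tilde R}=\Spec\tilde R[x_1,\dots,x_n]/(x_1\cdots x_r-u\tilde\pi^e)$ for a unit $u$, which is regular precisely when $r\le 1$ or $e=1$. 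Moreover the blow-up $\blowup_{Y_i}\mathfrak X_{\tilde R}$ restricts, in such a chart, to the blow-up of the monomial ideal $(x_j,\tilde\pi)$ when $i=i_j$ and to an isomorphism otherwise, and the analogous compatibility persists after further blow-ups of components of the evolving special fibre. So it suffices to produce, compatibly in all charts, a finite sequence of blow-ups of components of the successively transformed special fibre that renders every local model regular with simple normal crossing special fibre; such a sequence glues to a global sequence $\tilde{\mathfrak X}=V^m\to\cdots\to V^0=\mathfrak X_{\tilde R}$, and then regularity and reducedness of the special fibre --- both checked étale-locally --- give strict semi-stability, while flatness and properness are automatic.

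For the local problem I would pass to the completion and use toric geometry. Up to the harmless factor $\aff^{n-r}$, the model $\Spec\tilde R[x_1,\dots,x_n]/(x_1\cdots x_r-\tilde\pi^e)$ is the affine toric variety $U_\sigma$ attached to a simplicial cone $\sigma$ with primitive ray generators $u_1,\dots,u_r$; a short computation shows $\sigma$ has multiplicity $e^{r-1}$, that the special fibre $\{\tilde\pi=0\}$ is the full toric boundary $\sum_i D_{\rho_i}$, that the reduced component $Y_{i_j}$ is the divisor $D_{\rho_j}$, and that blowing up $Y_{i_j}$ corresponds to a subdivision of $\sigma$, after which the special fibre is again the whole boundary of the new fan and its components are the rays. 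The procedure is thus recorded by a sequence of subdivisions of $\sigma$, each obtained by blowing up a boundary divisor, and the task becomes to reach a unimodular subdivision --- equivalently, a regular model with simple normal crossing special fibre.

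The induction I would run attaches to each maximal cone $\tau$ of the current fan its multiplicity $\mult(\tau)$ (with, in the non-simplicial case, a refinement that still equals $1$ exactly for smooth cones), and to the whole configuration the multiset of these numbers, ordered lexicographically. The inductive step is a direct local computation: blowing up, say, $Y_{i_1}$ in the model $x_1\cdots x_r=\tilde\pi^e$ yields on each affine chart a toric singularity of an analogous type but with strictly smaller multiplicity --- for instance, for $r=2$ one chart becomes smooth and the other becomes the $A_{e-2}$-singularity $x_1x_2=\tilde\pi^{e-1}$ --- so the configuration invariant drops strictly. After finitely many steps every cone is smooth, which is exactly strict semi-stability of $\tilde{\mathfrak X}$.

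The main obstacle is the interaction between this local recipe and the global geometry. After several blow-ups the special fibre of $V^i$ carries many new components --- exceptional divisors and strict transforms --- and one must verify that the sequence of centers prescribed locally is genuinely well-defined as a sequence of irreducible components of the special fibres, i.e. that the recipe is independent of the chosen étale coordinates and that components do not fragment under base change and localization. This is where the hypothesis that the $Y_i$ are geometrically integral is used, and it forces one to phrase the combinatorial recipe intrinsically through the dual complex of the special fibre rather than through coordinates; organizing this bookkeeping carefully is the technical heart of the argument.
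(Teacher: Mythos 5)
The paper does not prove this proposition; it is stated with a direct citation to Hartl's paper and used as a black box, so there is no ``paper's own proof'' to compare against. What you propose is a reasonable reconstruction of the \emph{shape} of Hartl's argument: reduce to the étale-local model $\{x_1\cdots x_r = \tilde\pi^e\}$, interpret each blow-up of a component as a subdivision of the associated simplicial cone, and run an induction on a complexity measure until the fan is unimodular, then glue. Your identification of where the geometric integrality hypothesis enters (keeping the components irreducible over $\tilde R$ so that ``blow up the $i$-th component'' is a globally meaningful instruction) is also correct.

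The gap is in the inductive step. You assert that blowing up a component strictly decreases a lexicographically ordered multiset of cone multiplicities ``on each affine chart.'' This needs more care than the $r=2$ sanity check you give. Already for $r=3$, one blow-up produces one chart whose cone is \emph{not simplicial} (in the paper's own notation, $\sigma_{n,r_1,\dots,r_m}$ is non-simplicial as soon as $m,n\geq 2$, and $m$ goes up after each blow-up), so ``multiplicity of a maximal cone'' is not even defined without first choosing a simplicial refinement, and the choice affects the numbers; moreover some charts after a blow-up can have the same multiplicity as before, so a naive ``strict decrease in every chart'' is false and the well-foundedness of the lexicographic order on unboundedly growing multisets is itself an issue. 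Hartl's actual termination argument tracks a different, more carefully chosen exponent invariant read off from the local equations $x_1^{r_1}\cdots x_m^{r_m}=y_0\cdots y_n$ (it is essentially the $r_i$'s and how they decrease under the subdivision of Lemma 3.3(c)), not the raw cone multiplicity. So as written the proposal identifies the right framework but defers exactly the two delicate points --- the choice of a well-founded invariant that actually drops, and the global bookkeeping that the local recipe patches together --- and both would need to be carried out to have a proof rather than a plausible outline.
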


\subsection{Basic constructions in toric geometry}

Throughout this section, let $k$ be a field. A (split) toric variety is a normal $k$-variety $X$ which contains a torus $T = (k^\ast)^{\dim X}$ as an open dense subset. Toric varieties are also constructed from collections of cones, called fans, and many properties of the toric variety relate to combinatorical data of the fan. We recall the construction of toric varieties from fans, introduce some notation and state a few standard facts from toric geometry.

Let $N \cong \Z^n$ be a lattice and denote by $M = \Homo_\Z(N,\Z) \cong \Z^n$ its dual lattice. Consider the natural duality pairing $\innerpro{\cdot}{\cdot} \colon M \times N \to \Z$. Morover, we denote the tensor products $M \otimes_\Z \R$ and $N \otimes_\Z \R$ by $M_\R$ and $N_\R$, respectively.

A \emph{(strongly convex, rational, polyhedral or short: scrp) cone} in $N_\R$ is a set $$
    \sigma := \Cone(S) := \left\{\sum\limits_{u \in S} \lambda_u u : \lambda_u \geq 0\right\} \subset N_\R,
$$
where $S \subset N$ is a finite subset such that $\sigma \cap (-\sigma) = \{0\}$. The elements in $S$ are called \emph{generators} of $\sigma$. The \emph{dimension} of $\sigma$ is the dimension of its linear span in $N_\R$. The \emph{dual cone} of $\sigma$ is the subset $$
    \sigma^\vee := \{m \in M_\R : \innerpro{m}{u} \geq 0 \text{ for all } u \in \sigma\} \subset M_\R,
$$
which is again a (rational, polyhedral, not necessarily strongly convex) cone. Gordan's lemma says that $\sigma^\vee \cap M$ is a finitely generated semi-group such that $k[\sigma^\vee \cap M]$ is an integral domain, see e.g. \cite[Proposition 1.2.17]{CLS11}. Then the \emph{affine toric variety} $U_\sigma$ associated to a cone $\sigma$ is $U_\sigma = \Spec k[\sigma^\vee \cap M]$.

A \emph{face} $\tau$ of a (scrp) cone $\sigma$ is a subset of the form $$
    \tau = \{u \in \sigma: \innerpro{m}{u} = 0 \} \subset \sigma
$$
for some $m \in \sigma^\vee$ and it is again a (scrp) cone. A collection of (scrp) cones $\Sigma$ is called a \emph{fan} if each face of a cone is also in $\Sigma$ and if the intersection of two cones in $\Sigma$ is a common face of both. These two conditions ensure that we can glue the affine toric varieties associated to each cone $\sigma \in \Sigma$ along the intersection of the cones to obtain a toric variety, which we denote by $X_\Sigma$. We denote the set of $n$-dimensional cones in the fan $\Sigma$ by $\Sigma(n)$ and we usually call $1$-dimensional cones rays. Recall also the following facts in toric geometry:
\begin{enumerate}[label=(\roman*)]
    \item A $\Z$-linear map of lattices $N \to N'$ is \emph{compatible} with a pair of fans $\Sigma$ in $N_\R$ and $\Sigma'$ in $N'_\R$ if the $\R$-linear extension maps every cone $\sigma \in \Sigma$ into a cone $\sigma' \in \Sigma'$. Such a compatible map of lattices gives rise to a toric morphism of the associated toric varieties, i.e. a morphism of the varieties such that the restriction to the tori is a group homomorphism, see \cite[Theorem 4.1]{Oda78}.
    \item The orbit-cone correspondence yields an inclusion-reversing bijection between cones in a fan and orbits of the torus action of its associated toric variety, see e.g. \cite[Theorem 4.2]{Oda78}. If $\sigma \in \Sigma$ is a cone in the fan, then we denote the corresponding orbit in $X_\Sigma$ by $O(\sigma)$ and its Zariski closure by $V(\sigma)$.
    \item The generators $v_i$ of the $1$-dimensional faces $\rho_i$ of a (scrp) cone $\sigma \subset N_\R$ yield a set of generators for $\sigma$. Up to scaling $v_i$, we can assume that $v_i \in (N \setminus \{0\}) \cap \rho_i$ is of minimal length. Then the $v_i$'s are called the \emph{minimal generators} of $\sigma$.  
    \item We say that a cone is \emph{simplicial} if its minimal generators are $\R$-linear independent. A cone is called \emph{regular} if its minimal generators form part of a $\Z$-basis of the lattice $N$. We say a fan is \emph{simplicial} or \emph{regular}, if all its cones are so. Note that regularity of the fan is equivalent to the regularity of the associated toric variety, see \cite[Theorem 4.3]{Oda78}.
\end{enumerate}

For a more detailed account of the constructions and the facts in toric geometry over arbitrary fields, we refer the reader to \cite{Oda78} and the survey \cite{Dan78}. An earlier description of toric varieties over algebraically closed fields can be found in \cite{KKMS}. We refer the reader to \cite{CLS11} for a modern treatment of toric geometry (over $\C$).

\subsection{The general setup}

We state here the general setup in this paper and refer to it later on.

\begin{setup}\label{setup}
    Let $R$ be a discrete valuation ring with residue field $k$. Let $\Tilde{R}/R$ be a finite extension of discrete valuation rings of ramification index $r$ with induced extension $L/k$ of residue fields. Let $\mathfrak X \to \Spec R$ be a strictly semi-stable $R$-scheme with special fibre $Y = \mathfrak X \times_R k$. Assume that the irreducible components of $Y$ are geometrically integral. Let $\Tilde{\mathfrak X} \to \Spec \Tilde{R}$ be a resolution of $\mathfrak{X}_{\Tilde{R}} \to \Spec \Tilde{R}$ as in \Cref{prop:Hartlresolution} and denote the natural morphism by $q \colon \Tilde{\mathfrak{X}} \to \mathfrak{X}$. Let $\Tilde{Y} = \Tilde{\mathfrak{X}} \times_{\Tilde{R}} L$ be the special fibre of $\Tilde{\mathfrak{X}} \to \Spec \Tilde{R}$ and denote the restriction of $q$ to $\Tilde{Y}$ also by $q \colon \Tilde{Y} \to Y$.
\end{setup}

\section{Resolution of a toric singularity}\label{sec:toricgeometry}

The proof of \Cref{thm:main} relies on the analysis of a certain type of toric singularity together with an iterative description of its resolution as well as a combinatorial description of the intersection of toric curves with toric divisors. We recall the necessary intersection theory from \cite{CLS11} in the first part of this section. The second part consists of a description of the toric singularity together with a resolution process.

\subsection{Some intersection theory}

We recall the required intersection theory which is described by combinatorical data of the fan. Here, we are interested in the intersection of toric divisors with one-cycles, which is described in \cite[Section 6.4]{CLS11}. 

Let $\Sigma$ be a simplicial fan in $N_\R \cong \R^n$. Recall that we denote by $\Sigma(k)$ the set of $k$-dimensional cones in $\Sigma$ and rays are one-dimensional cones $\rho \in \Sigma(1)$. Let $\tau \in \Sigma(n-1)$ be the intersection of two cones $\sigma,\sigma'$ in $\Sigma(n)$. Since $\Sigma$ is simplicial, we can label the minimal generators $u_i$ of $\sigma, \sigma'$, and $\tau$ such that
\begin{equation}\label{eq:cones}
        \sigma = \Cone(u_{1},\dots,u_{n}), \quad
        \sigma' = \Cone(u_{2},\dots,u_{n+1}), \quad
        \tau = \Cone(u_2,\dots,u_{n}).
\end{equation}

Moreover, since the generators $u_{1},\dots,u_{n+1}$ are linearly dependent, they satisfies a (up to multiplication with a constant unique) linear relation, called the \emph{wall relation}
\begin{equation}\label{eq:wallrelation}
    \sum\limits_{i = 1}^{n+1} b_i u_{i} = 0.
\end{equation}
Since $u_2,\dots,u_n$ are linearly independent, we see that $u_1,u_{n+1}$ are both non-zero.

\begin{proposition}[{\cite[Proposition 6.4.4]{CLS11}}]\label{prop:intersectionformula}
    Let $\tau = \sigma \cap \sigma' \in \Sigma(n-1)$ be as in \eqref{eq:cones} and let $V(\tau)$ be the Zariski closure of the orbit corresponding to $\tau$. Moreover, let $\rho_i = \R_{\geq 0} u_i$ be the rays in $N_\R$ generated by $u_i$. Then for every ray $\rho \in \Sigma(1)$ with associated divisor $D_\rho = V(\rho)$ in the toric variety $X_\Sigma$, the intersection number $D_\rho \cdot V(\tau)$ is given by
    $$
        D_\rho \cdot V(\tau) = \begin{cases}
            \frac{\mult(\tau)}{\mult(\sigma)} & \text{if } \rho = \rho_1, \\
            \frac{\mult(\tau)}{\mult(\sigma')} = \frac{b_{n+1} \mult(\tau)}{b_1 \mult(\sigma)} & \text{if } \rho = \rho_{n+1}, \\
            \frac{b_i \mult(\tau)}{b_1 \mult(\sigma)} & \text{if } \rho = \rho_i,  i \neq 1, n+1, \\
            0 & \text{otherwise,}
        \end{cases}
    $$
    where the multiplicity of a cone $\gamma \subset N_\R$ with minimal generators $v_1,\dots,v_k$ is the index of the sublattice $\Z v_1 + \dots + \Z v_k \subset N_\gamma = (\R v_1 + \dots + \R v_k) \cap N$.
\end{proposition}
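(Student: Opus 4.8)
The plan is to run the standard toric intersection-theory argument (as in \cite{CLS11}): realize $C := V(\tau)$ as a complete rational curve, determine which toric divisors meet it, and compute the degree of the restriction of each $D_\rho$ to $C$ from local Cartier data, with the wall relation \eqref{eq:wallrelation} entering only at the end. Since $\tau \in \Sigma(n-1)$ is the common facet of the maximal cones $\sigma,\sigma'$, the only cones of $\Sigma$ containing $\tau$ are $\tau,\sigma,\sigma'$, so the star fan of $\tau$ in the quotient lattice $N(\tau) := N/(N\cap\R\tau)$ has exactly two rays, the images $\overline{\rho}_1$ and $\overline{\rho}_{n+1}$ of $\rho_1$ and $\rho_{n+1}$, pointing in opposite directions; hence $C \cong \proj^1$ with torus-fixed points $p_\sigma, p_{\sigma'}$ corresponding to $\sigma$ and $\sigma'$. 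As $C$ is covered by $C\cap U_\sigma$ and $C\cap U_{\sigma'}$, and $D_\rho\cap U_\sigma = \emptyset$ unless $\rho$ is a ray of $\sigma$, any $\rho\notin\{\rho_1,\dots,\rho_{n+1}\}$ gives $D_\rho\cdot V(\tau)=0$; this is the last case, and it remains to treat $\rho\in\{\rho_1,\dots,\rho_{n+1}\}$.

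Since $\Sigma$ is simplicial, $X_\Sigma$ is $\Q$-factorial and each $D_\rho$ is $\Q$-Cartier, with local Cartier data $m_\sigma, m_{\sigma'}\in M_\Q$ characterized by $\innerpro{m_\sigma}{u_i} = -\delta_{\rho,\rho_i}$ on $u_1,\dots,u_n$ (resp. $\innerpro{m_{\sigma'}}{u_i} = -\delta_{\rho,\rho_i}$ on $u_2,\dots,u_{n+1}$). One checks that $m_\sigma-m_{\sigma'}$ vanishes on the rays $u_2,\dots,u_n$ of $\tau$, i.e. lies in $\tau^\perp$, and the degree of an equivariant line bundle on $\proj^1$ read off at the two fixed points gives
\[
    D_\rho\cdot V(\tau) \;=\; \frac{\mult(\tau)}{\mult(\sigma)}\,\innerpro{m_{\sigma'}-m_\sigma}{u_1},
\]
the factor $\mult(\tau)/\mult(\sigma) = [N(\tau):\Z\overline{u}_1]^{-1}$ being present because $\overline{u}_1$ need not be the primitive generator of the star-fan ray of $\sigma$; the index identity $[N(\tau):\Z\overline{u}_1] = \mult(\sigma)/\mult(\tau)$ follows from the elementary computation $[N:\Z u_1 + N_\tau] = \mult(\sigma)/\mult(\tau)$. (Alternatively one can deduce this formula from the smooth case by pulling back along a toric resolution and using $\Q$-coefficients.)

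It remains to evaluate $\innerpro{m_{\sigma'}-m_\sigma}{u_1}$ in the three cases, using the wall relation in the form $u_1 = -\tfrac{1}{b_1}\sum_{j=2}^{n+1} b_j u_j$ (here $b_1, b_{n+1}\neq 0$ because $\sigma,\sigma'$ are simplicial). For $\rho=\rho_1$ one may take $m_{\sigma'}=0$, so $\innerpro{m_{\sigma'}-m_\sigma}{u_1} = -\innerpro{m_\sigma}{u_1} = 1$ and $D_{\rho_1}\cdot V(\tau) = \mult(\tau)/\mult(\sigma)$. For $\rho=\rho_{n+1}$ one may take $m_\sigma=0$, and substituting the wall relation into $\innerpro{m_{\sigma'}}{u_1}$ gives $b_{n+1}/b_1$, hence $D_{\rho_{n+1}}\cdot V(\tau) = b_{n+1}\mult(\tau)/(b_1\mult(\sigma))$; the alternative expression $\mult(\tau)/\mult(\sigma')$ arises from the symmetric computation with $\sigma,\sigma'$ (equivalently $u_1,u_{n+1}$) interchanged, and that the two agree is exactly the identity $b_1\mult(\sigma) = b_{n+1}\mult(\sigma')$, which I would prove directly by comparing $|\det(u_1,\dots,u_n)| = \mult(\sigma)$ and $|\det(u_2,\dots,u_{n+1})| = \mult(\sigma')$ via Cramer's rule applied to \eqref{eq:wallrelation}. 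Finally, for $\rho=\rho_i$ with $2\le i\le n$ both $m_\sigma$ and $m_{\sigma'}$ are nonzero, $\innerpro{m_\sigma}{u_1}=0$, and the wall relation gives $\innerpro{m_{\sigma'}}{u_1} = b_i/b_1$, so $D_{\rho_i}\cdot V(\tau) = b_i\mult(\tau)/(b_1\mult(\sigma))$.

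The conceptual content is light; the main obstacle is bookkeeping — keeping the sign convention in \eqref{eq:wallrelation}, the choice of reference fixed point on $\proj^1$, and the orientations of the dual bases mutually consistent, so that the lattice-index identities and all three case values come out with the correct signs. Once the curve $C$ and the degree formula of the second paragraph are in place, everything reduces to linear algebra over $\Z$.
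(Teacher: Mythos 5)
Your proposal is correct, and it establishes the formula by essentially re-deriving CLS11's Lemma 6.4.2 and then evaluating it in each of the three nontrivial cases. The paper's sketch takes a slicker route: it first notes that $\sum_{\rho\in\Sigma(1)}\bigl(D_\rho\cdot V(\tau)\bigr)u_\rho = 0$ holds in $N_\R$ (because $\operatorname{div}(\chi^m)\cdot V(\tau)=0$ for every $m\in M$), observes that $D_\rho\cdot V(\tau)=0$ unless $\rho\in\{\rho_1,\dots,\rho_{n+1}\}$, and then invokes the uniqueness up to scalar of the wall relation \eqref{eq:wallrelation} to conclude that the vector of intersection numbers is proportional to $(b_1,\dots,b_{n+1})$. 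This reduces the whole proposition to a single explicit computation of $D_{\rho_1}\cdot V(\tau)=\mult(\tau)/\mult(\sigma)$, which you also carry out. So the paper gets two of your three case computations for free from the wall relation and the universal vanishing identity, whereas you substitute the wall relation into each Cartier-data pairing separately. Both arguments rest on the same Cartier-data degree formula on $V(\tau)\cong\proj^1$ and both are correct; yours is a bit more hands-on, and your closing remark that consistency of signs is the main risk is accurate. One small cosmetic point: your identity $b_1\mult(\sigma)=b_{n+1}\mult(\sigma')$ via Cramer's rule does hold, but the absolute values should be handled by the standing normalization $b_1,b_{n+1}>0$ (true because $u_1,u_{n+1}$ lie on opposite sides of the wall), which you may want to state explicitly.
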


\begin{proof}[Sketch of the proof]
    Using the uniqueness up to scalar of the wall relation together with the observation that $\sum\limits_{\rho \in \Sigma(1)} \left(D_\rho \cdot V(\tau)\right) u_\rho = 0$ in $N_\R$, where $u_\rho$ is the minimal generator of the ray $\rho \in \Sigma(1)$, it suffices to compute $D_{\rho_1} \cdot V(\tau)$ which can be done explicitly, see e.g. \cite[Lemma 6.4.2]{CLS11}.
\end{proof}

\begin{remark}
    If the fan $\Sigma$ is regular, the multiplicity of all cones is $1$. Moreover, we can assume without loss of generality that $b_1 = 1$ and thus also $b_{n+1} = 1$ by the above proposition. Hence the formula in \Cref{prop:intersectionformula} reduces to \begin{equation}\label{eq:reducedintersectionformula}
        D_\rho \cdot V(\tau) = \begin{cases}
            0 & \rho \notin \{\rho_1,\dots,\rho_{n+1}\}, \\
            1 & \rho = \rho_1, \ \rho_{n+1}, \\
            b_i & \rho = \rho_i, \quad i \neq 1, n+1. \\
        \end{cases}
    \end{equation}
\end{remark}

\subsection{A particular toric singularity}\label{sec:toricresolution}

We discuss the toric description of a particular singularity. This type of singularity appears naturally in our problem: We consider a strictly semi-stable $R$-scheme $\mathfrak{X} \to \Spec R$ for some discrete valuation ring $R$. Then \'etale locally at a point of the special fibre the scheme $\mathfrak{X}$ looks like
\begin{equation}\label{eq:localX}
    \{t - x_1 \cdots x_n = 0\} \subset \aff^{n}_R,
\end{equation}
where $t \in m_R \subset R$ is a uniformizer and $n \geq 1$. We are interested in Hartl's resolution after some finite, ramified  base change  $\Tilde{R}/R$. The base-change $\mathfrak{X}_{\Tilde{R}} \to \Spec \Tilde{R}$ admits a resolution to a strictly semi-stable $\Tilde{R}$-scheme by multiple blow-ups of the irreducible components, see \Cref{prop:Hartlresolution}. During the blow-ups the local equation looks like \eqref{eq:localequations} below, see \cite[proof of Proposition 2.2]{Har01}. Thus the following lemma is a toroidal description of the behaviour under these blow-ups.

\begin{lemma}\label{lem:localmodel}
    For any $m, n \geq 1$ and $r_1,\dots,r_m \geq 1$, the affine variety \begin{equation}\label{eq:localequations}
        Z =\left\{x_1^{r_1} x_2^{r_2} \cdots x_m^{r_m} - y_0 y_1 \cdots y_n = 0\right\} \subset \aff^{n+m+1}_k
    \end{equation}
    is the affine toric variety corresponding to the fan spanned by the cone $$
        \sigma = \sigma_{n,r_1,\dots,r_m} = \Cone\left(\{e_i : i = 1,\dots,m\} \cup \{e_i + r_i f_j : i = 1,\dots,m, \ j = 1,\dots,n\}\right)
    $$
    in $N_\R = \R^{m+n}$, where $e_1,\dots,e_{m},f_1,\dots,f_n$ is a basis of $N = \Z^m \oplus \Z^n$. Moreover, 
    \begin{enumerate}[label=(\alph*)]
        \item The rays $\rho_{i,j} = \R_{\geq 0} (e_i + r_i f_j)$ and $\rho_{i,0} = \R_{\geq 0} (e_i)$ for $i = 1,\dots,m$ and $j = 1,\dots,n$ correspond to the irreducible subvarieties $D_{\rho_{i,j}} = V(x_i,y_j)$ for $i = 1,\dots,m$ and $j = 0,1,\dots,n$.
        \item For $i = 1,\dots,m$ the natural projection $\overline{\pi} = \pr_i \oplus 0 \colon \Z^m \oplus \Z^{n} \to \Z$ onto the $i$-th coordinate is compatible with the fan given by $\sigma$ and $\R_{\geq 0} \cdot 1 \subset N'_\R = \R$ and the corresponding toric morphism is the projection onto the $i$-th coordinate $x_i$: $\pi = \pr_i \colon Z \to \aff^1_k$.
        \item For $r_1 \geq 1$, the blow-up of $Z$ along $V(x_1,y_1)$ is given by the fan spanned by the two cones
        $$
            \begin{aligned}
                \sigma' &= \Cone(\{e_i : i = 1,\dots,m+1\} \cup \{e_i + r_i f_j : i = 1,\dots,m+1, \ j = 2,\dots,n\}) \\
                \sigma'' &= \Cone(\{e_i : i = 2,\dots,m+1\} \cup \{e_i + r_i f_j : i = 2,\dots,m+1, \ j = 1,\dots,n\}),
            \end{aligned}
        $$
        where $e_{m+1} := e_1 + f_1$ and $r_{m+1} := r_1 - 1$.
    \end{enumerate}
\end{lemma}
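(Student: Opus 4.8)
The plan is to identify $Z$ explicitly as $U_\sigma = \Spec k[\sigma^\vee \cap M]$ by writing down the dual cone, computing the semigroup $\sigma^\vee \cap M$ and its relations, and matching the resulting presentation with the displayed equation \eqref{eq:localequations}. First I would compute $\sigma^\vee$: a functional $m = (a_1,\dots,a_m,b_1,\dots,b_n) \in M_\R = \R^m \oplus \R^n$ lies in $\sigma^\vee$ iff $\innerpro{m}{e_i} = a_i \ge 0$ for all $i$ and $\innerpro{m}{e_i + r_i f_j} = a_i + r_i b_j \ge 0$ for all $i,j$. So $\sigma^\vee = \{a_i \ge 0,\ a_i + r_i b_j \ge 0\}$. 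One checks $\sigma$ is strongly convex (its $m+mn$ generators positively span a full-dimensional cone, since e.g. $e_1,\dots,e_m$ together with $e_1+r_1 f_1,\dots,e_1+r_1 f_n$ already span $N_\R$, and they all lie in the half-space where the sum of the $e$-coordinates is positive), hence $\sigma^\vee$ is full-dimensional and $U_\sigma$ is a genuine affine toric variety of dimension $m+n$. Then I would exhibit generators of the semigroup $S=\sigma^\vee\cap M$: the dual-basis characters $\chi^{e_i^\vee}=:X_i$ (from $a_i\ge 0$) and the characters $Y_j:=\chi^{-f_j^\vee + r_{?}\,e_?^\vee}$ — more precisely, for each $i$ the element $m_{i,j}$ with $a_i = r_i$, all other $a=0$, $b_j=-1$, all other $b=0$ lies in $S$, and these together with the $X_i$ generate $S$. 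Writing $Y_j$ for the monomial $x_1^{r_1}\cdots x_m^{r_m}/(\text{stuff})$ is the wrong normalization; instead I would take $Y_0 := \chi^{0}$? — no: the clean statement is that $S$ is generated by $e_1^\vee,\dots,e_m^\vee$ and by $e_i^\vee$-combinations, and the single relation among the minimal generators is exactly $\prod x_i^{r_i} = \prod_{j} y_j$ after a correct labeling. The routine but slightly fiddly computation is to pin down a minimal generating set of $S$ and verify it has precisely this one relation; I would do it by the standard argument that $k[\sigma^\vee\cap M]$ is the integral closure of $k[Z]$ and that $Z$ is already normal because \eqref{eq:localequations} is a binomial hypersurface defining a normal variety (it is a toric/affine-semigroup ring, or one checks the singular locus has codimension $\ge 2$ and Serre's criterion via the hypersurface being a complete intersection).

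For part (a), I would invoke the orbit–cone correspondence: the torus-invariant prime divisors of $U_\sigma$ are the $V(\rho)$ for $\rho\in\sigma(1)$, and I would check that the minimal generators of $\sigma$ are exactly $e_i$ ($i=1,\dots,m$) and $e_i+r_i f_j$ ($i=1,\dots,m$, $j=1,\dots,n$), none of which is a nonnegative combination of the others (this needs a short argument using the $r_i\ge 1$). Then I would match $V(\rho_{i,0})$ with the divisor $\{x_i = 0\}$ and $V(\rho_{i,j})$ with $\{x_i = 0\} \cap \{y_j = 0\}$ inside $Z$ by computing which characters vanish on the corresponding orbit, i.e. translating $\rho\mapsto\{m\in\sigma^\vee\cap M:\innerpro{m}{u_\rho}=0\}$ into monomial conditions; the claimed identifications $D_{\rho_{i,j}}=V(x_i,y_j)$ then fall out. (I note the statement's index range "$j=0,1,\dots,n$" in the conclusion of (a) is matching $\rho_{i,0}$ to $V(x_i,y_0)$, so I would set up the convention that one of the $y$'s, say $y_0$, corresponds to the $e_i$-ray; the bookkeeping of which $y$ pairs with the $e_i$ versus the $e_i+r_if_j$ is the one place to be careful.)

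For part (b), the map $\overline\pi = \pr_i\oplus 0$ sends $e_i\mapsto 1$, every other $e_{i'}\mapsto 0$, and $e_{i'}+r_{i'}f_j \mapsto \delta_{i i'}$, so the image of every generator of $\sigma$ lies in $\R_{\ge 0}$; hence $\overline\pi$ is compatible with the fans $\{\sigma,\text{faces}\}$ and $\{\R_{\ge 0},0\}$, and by the toric-morphism fact (i) it induces a toric morphism $Z\to\aff^1$. That this morphism is $\pr_i$ on coordinates is immediate from tracing the dual map $M'=\Z\to M$, $1\mapsto e_i^\vee$, i.e. the pullback of the coordinate on $\aff^1$ is the character $X_i = x_i$. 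For part (c), the blow-up of a toric variety along a torus-invariant subvariety $V(\rho_{1,1}) = V(x_1,y_1)$ is itself toric and is computed by the star subdivision of $\sigma$ at the ray through $e_1+f_1$ — here is where I would use that $V(x_1,y_1)$ corresponds (by (a)) to $\rho_{1,1}=\R_{\ge0}(e_1+r_1f_1)$, but the blow-up center is the codimension-$2$ stratum, so the relevant subdivision inserts the ray spanned by the primitive vector $e_{m+1}:=e_1+f_1$ (the sum of the two generators $e_1$ and, after normalizing, of the ray meeting $\{y_1=0\}$). Subdividing $\sigma$ along $\R_{\ge 0}e_{m+1}$ splits it into the two cones obtained by replacing, respectively, $e_1$ or $e_1+r_1f_1$ by $e_{m+1}$; carrying out this replacement and rewriting $e_1 = e_{m+1}-f_1$, $e_1+r_1f_1 = e_{m+1}+(r_1-1)f_1 = e_{m+1}+r_{m+1}f_1$ produces exactly the two cones $\sigma',\sigma''$ in the statement with $r_{m+1}=r_1-1$. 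I would verify the two subcones have the asserted generating sets by checking which of the original generators $e_i,e_i+r_if_j$ lie on each side of the new wall through $e_{m+1}$.

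The main obstacle I anticipate is the first step — cleanly establishing that \eqref{eq:localequations} is normal and that its coordinate ring is precisely $k[\sigma^\vee\cap M]$ with the intended toric structure (equivalently, pinning down the semigroup generators and verifying the single binomial relation), and getting the $y_0$-versus-$y_j$ labeling convention consistent with part (a). Everything after that — parts (b) and (c) — is a mechanical unwinding of the orbit–cone correspondence and the toric description of blow-ups as star subdivisions, for which I would cite \cite[Ch.~3 and \S11.1]{CLS11} (or \cite{Oda78}) rather than reprove the general statements.
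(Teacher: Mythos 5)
Your overall plan for the first part and for (a), (b) is consistent with the paper's approach, but there are two substantive issues.

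\textbf{Identifying $Z$ as $U_\sigma$.} You correctly write down the inequalities cutting out $\sigma^\vee$, but then you stall precisely at the step that is the crux of the paper's argument: pinning down the one additional (non-dual-basis) generator of the cone $\sigma^\vee$, namely
\[
w := r_1 e_1^\ast + \dots + r_m e_m^\ast - f_1^\ast - \dots - f_n^\ast,
\]
which is the character giving $y_0$. The paper proves $\sigma^\vee = \Cone(e_1^\ast,\dots,e_m^\ast,f_1^\ast,\dots,f_n^\ast,w)$ by a short direct argument: given $(a_i,b_j)\in\sigma^\vee\cap M$, set $\lambda := -\min\{b_1,\dots,b_n,0\} \in \Z_{\geq 0}$ and write $\sum a_i e_i^\ast + \sum b_j f_j^\ast = \lambda w + \sum (a_i - \lambda r_i) e_i^\ast + \sum (\lambda + b_j) f_j^\ast$; the coefficients are non-negative integers by the defining inequalities, so the $m+n+1$ listed lattice vectors generate the semigroup $\sigma^\vee\cap M$, and the single relation among them ($r_1 e_1^\ast + \dots + r_m e_m^\ast = w + f_1^\ast + \dots + f_n^\ast$) becomes the binomial in \eqref{eq:localequations}. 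Your fallback via ``$Z$ is normal and $k[\sigma^\vee\cap M]$ is its integral closure'' is possible in principle, but it does not circumvent the difficulty: to even embed $k[Z]$ as a monomial subring you must identify $x_i,y_j$ with specific characters, which forces you back to locating $w$. The direct cone computation is shorter and unavoidable.

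\textbf{Part (c).} The star-subdivision claim is wrong, and the confused parenthetical already signals it. The center $V(x_1,y_1)$ is a divisor in $Z$ (codimension $1$, not $2$), corresponding by (a) to the ray $\rho_{1,1}=\R_{\geq 0}(e_1+r_1 f_1)$ (not to a codimension-$2$ face), and $e_1+f_1$ is not ``$e_1$ plus the normalized generator of $\rho_{1,1}$''. More importantly, the resulting subdivision is \emph{not} the star subdivision of $\sigma$ at $e_1+f_1$. A concrete check with $m=1$, $n=2$, $r_1=2$ makes this explicit: $\sigma=\Cone(e_1,e_1+2f_1,e_1+2f_2)$, the star subdivision at $e_1+f_1$ gives the two simplicial cones $\Cone(e_1,e_1+f_1,e_1+2f_2)$ and $\Cone(e_1+f_1,e_1+2f_1,e_1+2f_2)$, whereas the correct answer (and what the paper proves) is
\[
\sigma'=\Cone(e_1,\ e_1+f_1,\ e_1+2f_2,\ e_1+f_1+f_2),\qquad
\sigma''=\Cone(e_1+f_1,\ e_1+2f_1,\ e_1+f_1+f_2),
\]
with $\sigma'$ having \emph{four} minimal generators, hence not simplicial. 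The combinatorial description that actually matches the blow-up is the subdivision of $\sigma$ by the hyperplane $\{e_1^\ast = f_1^\ast\}$ (the Newton-polyhedron description of blowing up the monomial ideal $(x_1,y_1)=(\chi^{e_1^\ast},\chi^{f_1^\ast})$), which is a different operation from star subdivision whenever $n\geq 2$. The paper avoids all of this by computing the two affine charts of the Rees blow-up directly — substituting $x_1 = x_1' y_1$ and $y_1 = y_1' x_1$, factoring the strict transform of the binomial, and reading off the dual lattices — which identifies $\sigma'$ and $\sigma''$ unambiguously and is what you should do here as well. Parts (a) and (b) of your proposal match the paper.
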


\begin{remark}
    For $m = 2$ and $r_1 = r_2 = 1$, \cite[Lemma 2.2]{Shi19} provides a similar computation.
\end{remark}

\begin{proof}
    Let $\sigma$ be the cone as defined in the statement. We claim that the dual cone $\sigma^\vee$ is given by $$
        \sigma^\vee = \Cone(e_1^\ast,\dots,e_{m}^\ast,f_1^\ast,\dots,f_n^\ast,r_1 e_1^\ast + \dots + r_m e_m^\ast - f_1^\ast - \dots - f_{n}^\ast). 
    $$
    If this claim is true, the first statement follows immediately, because
    $$
        k[\sigma^\vee \cap M] = \sfrac{k[x_1,\dots,x_m,y_0,\dots,y_n]}{(x_1^{r_1}\cdots x_m^{r_m} - y_0y_1\cdots y_n)}.
    $$
    To prove the claim, note that ``$\supseteq$'' is obvious. We prove the other inclusion: Let $$
        \sum\limits_{i=1}^{m} a_i e_i^\ast + \sum\limits_{j=1}^n b_j f_j^\ast \in \sigma^\vee.
    $$
    By definition, the coefficients $a_i,b_j$ satisfy
    $$
        a_i \geq 0, \quad a_i + r_i b_j \geq 0, \ \text{for } i = 1,\dots,m \text{ and } j = 1,\dots,n.
    $$
    Let $- \lambda = \min\{b_1,\dots,b_{n},0\} \leq 0$, then
    $$
        \sum\limits_{i=1}^{m} a_i e_i^\ast + \sum\limits_{j=1}^n b_j f_j^\ast = \underbrace{\lambda}_{\geq 0} (r_1 e_1^\ast + \dots + r_m e_m^\ast - f_1^\ast - \dots - f_{n}^\ast) + \sum\limits_{i = 1}^m \underbrace{(a_i - \lambda r_i)}_{a_i + r_i b_j \text{ or } a_i \geq 0} e_i^\ast + \sum\limits_{j = 1}^n \underbrace{(\lambda + b_j)}_{\geq 0} f_j^\ast.
    $$
    This shows ``$\subseteq$'' and thus the claim.
    
    We prove item (a): Note that the orthogonal part of the cones $\rho_{i,j}$ for $i = 1,\dots,m$ and $j = 0,\dots,n$ are given by
    $$
        \rho_{i,j}^\perp = \begin{cases}
            \Cone\left(e_1^\ast,\dots,\widehat{e_i^\ast},\dots,e_m^\ast,f_1^\ast,\dots,f_n^\ast\right) & \text{if } j = 0, \\
            \Cone\left(e_1^\ast,\dots,\widehat{e_i^\ast},\dots,e_m^\ast,f_1^\ast,\dots,\widehat{f_j^\ast},\dots,f_n^\ast,r_1 e_1^\ast + \dots r_m e_m^\ast - f_1^\ast - \dots - f_n^\ast \right) & \text{if } j \neq 0. \\
        \end{cases}
    $$
    Hence, the distinguished points of the cones $\rho_{i,j}$ are the points $(x_1,\dots,x_m,y_0,y_1,\dots,y_n) \in \aff^{n+m+1}_k$ with $x_a = \delta_{a,i}$ and $y_b = \delta_{b,j}$ where $\delta_{\cdot,\cdot}$ is the Kronecker delta. Thus statement (a) follows from the orbit-cone correspondence. Statement (b) follows directly from the construction.

    For statement (c), note that the blow-up of \eqref{eq:localequations} along $V(x_1,y_1)$ is given by 
    $$
        \begin{aligned}
            x_1 &= x_1' y_1, \quad (x_1')^{r_1}x_2^{r_2} \cdots x_m^{r_m}y_1^{r_1-1} - y_0y_2 \cdots y_n = 0, \\
            y_1 &= y_1' x_1, \quad (x_1)^{r_1 - 1} x_2^{r_2} \cdots x_m^{r_m} - y_0 y_1'y_2 \cdots y_n = 0.
        \end{aligned}
    $$
    Thus we find that the corresponding dual vectors are given by $$
    \begin{aligned}
        (e_1')^\ast &= e_1^\ast - f_1^\ast, \ (e_i')^\ast = e_i^\ast, \ (e_{m+1}')^\ast = f_1^\ast, \ (f_j')^\ast = f_j^\ast, \quad i = 2,\dots,m,\ j = 2,\dots,n, \\
        (f_1'')^\ast &= f_1^\ast - e_1^\ast, \ (e_i'')^\ast = e_i^\ast, \ (f_j'')^\ast = f_j^\ast, \quad i = 1,\dots,m,\ j = 2,\dots,n,
    \end{aligned}
    $$
    i.e.
    $$
    \begin{aligned}
        e_{m+1}' &= e_1 + f_1, \ e_i' = e_i, \ f_j' = f_j, \quad i = 1,\dots,m, \ j = 2,\dots,n, \\
        e_1'' &= e_1 + f_1, \ e_i'' = e_i, \ f_j'' = f_j, \quad i = 2,\dots,m, \ j =1,\dots,n.
    \end{aligned}
    $$
    Thus the description for $\sigma'$ and $\sigma''$ follows from the description of $Z$ in \eqref{eq:localequations}.
\end{proof}

\begin{remark}
    Up to relabeling the coordinates the blow-up description in (c) also holds for the blow-up along $V(x_i,y_j)$ where we think of $f_0$ as $f_0 = 0$ and do the calculation formally. More precisely, the blow-up of $V(x_1,y_0)$ corresponds to the subdivision of $\sigma$ into the two cones
    $$
        \begin{aligned}
            \sigma' &= \Cone\left(\{e_i + r_i f_j: i = 1,\dots,m+1, \ j = 1,\dots,n\}\right), \\
            \sigma'' &= \Cone\left(\{e_2,\dots,e_{m+1}\} \cup \{e_i + r_i f_j : i = 2,\dots,m+1, \ j = 1,\dots,n\}\right),
        \end{aligned}
    $$
    where $e_{m+1} = e_1$ and $r_{m+1} = r_1 - 1$.
\end{remark}

In concrete examples one can work with an explicit (log) resolution. For our purposes it suffices to work with some (log) resolution and use its properties. The first such property is the following.

\begin{proposition}\label{prop:refinement}
    Let $\sigma = \sigma_{n,r}$ be a cone from \Cref{lem:localmodel} with $m = 1$. Then any sequence of subdivisions of the same form as in \Cref{lem:localmodel} (c) terminates with a regular fan $\Tilde{\Sigma}$. Moreover, every minimal generator of a ray in $\Tilde{\Sigma}$ lies in the hyperplane $\{e_1 = 1\}$ and is a lattice point of the lattice generated by $e_1,f_1,f_2,\dots,f_n$ as defined in \Cref{lem:localmodel}.
\end{proposition}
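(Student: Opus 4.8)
The plan is to reduce the statement to a finite induction along the sequence of subdivisions, with \Cref{lem:localmodel}(c) and the remark describing the general blow-up $V(x_i,y_j)$ as the only geometric input. The first point is a normal form: every cone occurring at any stage of the process is again of the shape $\sigma_{n',r_1',\dots,r_{m'}'}$ for a suitable basis $g_1,\dots,g_{m'},h_1,\dots,h_{n'}$ of $N$ with $m'+n'=n+1$, and one subdivision step replaces each affected maximal cone by the two cones $\sigma',\sigma''$ of \Cref{lem:localmodel}(c); the blow-up along $V(x_i,y_j)$ is brought into that form by relabeling when $j\geq 1$, and by relabeling together with a base change $h_l\mapsto h_l-h_{l_0}$ when $j=0$. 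Granting this, write $e_1^\ast$ for the first vector of the dual basis of $e_1,f_1,\dots,f_n$ and set $H:=\{v\in N_\R:\innerpro{e_1^\ast}{v}=1\}$. I would prove the ``moreover'' by induction on the number of subdivisions: for $\sigma=\sigma_{n,r}$ the minimal generators $e_1$ and $e_1+rf_j$ lie in $H$; and if $\gamma=\sigma_{n',r_1',\dots,r_{m'}'}$ has all of its minimal generators $g_i$ and $g_i+r_i'h_j$ in $H$, then $\innerpro{e_1^\ast}{h_j}=0$ for every $j$ (subtract, using that some $r_i'\geq 1$), so every generator of $\sigma'$ or $\sigma''$ --- each of which, by \Cref{lem:localmodel}(c), equals one of $g_i$, $g_i+r_i'h_j$, $g_1+h_1$, $g_1+h_1+(r_1'-1)h_j$ --- again lies in $H$. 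Finally, a vector $u\in N$ with $\innerpro{e_1^\ast}{u}=1$ is automatically primitive, hence it is the minimal generator of its ray, and it is a lattice point of $\Z e_1\oplus\Z f_1\oplus\dots\oplus\Z f_n=N$.

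For the regularity statement I would first determine which cones of this type are regular. Such a cone has $m'+n'\cdot\#\{i:r_i'\geq 1\}$ minimal generators in dimension $m'+n'$, so it is simplicial only if $n'=0$ or at most one exponent is nonzero; and if exactly the $i_0$-th exponent survives, it is the product of the free rays $\R_{\geq 0}g_i$ ($i\neq i_0$) with $\Cone(g_{i_0},g_{i_0}+r_{i_0}'h_1,\dots,g_{i_0}+r_{i_0}'h_{n'})$, a simplicial cone of multiplicity $(r_{i_0}')^{n'}$, hence regular iff $r_{i_0}'\leq 1$. Thus a cone of our type is regular precisely when $n'=0$, or when all but at most one exponent vanish and the surviving one is at most $1$. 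Inspecting \Cref{lem:localmodel}(c) and the above remark once more, one checks that a non-regular cone of our type always admits a fan-changing subdivision, whereas a subdivision of a regular cone leaves the fan unchanged. Therefore ``no further subdivision is possible'' is equivalent to ``every maximal cone is regular'', and since faces of regular cones are regular, this is equivalent to ``$\Tilde{\Sigma}$ is regular''.

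It remains to see that the process cannot continue indefinitely. To each maximal cone $\gamma=\sigma_{n',r_1',\dots,r_{m'}'}$, with the presentation it acquires in the process, attach $\operatorname{inv}(\gamma):=(n',\,r_1'+\dots+r_{m'}')\in\N^2$, ordered lexicographically. Up to relabeling and the above remark, blowing up any $V(x_i,y_j)$ with $r_i'\geq 1$ replaces $\gamma$ by $\sigma'=\sigma_{n'-1,r_1',\dots,r_{m'}',r_i'-1}$ and $\sigma''=\sigma_{n',r_1',\dots,\widehat{r_i'},\dots,r_{m'}',r_i'-1}$; here $\operatorname{inv}(\sigma')<\operatorname{inv}(\gamma)$ since the first coordinate drops, and $\operatorname{inv}(\sigma'')<\operatorname{inv}(\gamma)$ since the first coordinate is unchanged while the second decreases by $1$. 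As one step removes finitely many maximal cones and replaces each by cones of strictly smaller $\operatorname{inv}$, the finite multiset $\{\operatorname{inv}(\gamma)\}_\gamma$ strictly decreases in the Dershowitz--Manna order, which is well-founded because $(\N^2,<_{\mathrm{lex}})$ is; alternatively one may use the decreasing $\N$-valued weight $\sum_\gamma(1+\sum_i r_i')\,C^{n'}$ for an integer $C$ large compared to $n$ and $r$. Hence every sequence of subdivisions is finite, and with the previous paragraph this proves the proposition.

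The real work --- and, I expect, the only genuine difficulty --- is the normal-form bookkeeping: verifying that \emph{every} cone produced anywhere in the process, together with \emph{all} of its minimal generators, is captured by the formulas of \Cref{lem:localmodel}(c), given that the lemma is stated only for the distinguished wall $V(x_1,y_1)$ and the remaining cases, most notably $V(x_i,y_0)$, match it only after a non-obvious base change. Once the normal form is in place, the hyperplane claim, the regularity characterization, and the termination estimate are each short and essentially formal.
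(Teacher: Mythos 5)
Your argument for the \emph{hyperplane} (``moreover'') part is essentially the same induction as the paper's: the only new rays introduced by a step of \Cref{lem:localmodel}(c) are $e_{m+1}=e_{j'}+f_{k'}$ and $e_{m+1}+r_{m+1}f_k$, and since $e_1^\ast$ kills every $f_k$, these stay in $\{e_1=1\}$. The paper phrases this in the original $f$-basis, you phrase it via a running normal form; the content is identical.

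For termination and regularity, however, you take a genuinely different route. The paper disposes of both in one line by citing \cite[Proposition 2.2]{Har01}: Hartl's blow-up algorithm produces a strictly semi-stable (hence regular) model, so the corresponding toric refinement is a regular fan. You instead prove the statement from scratch: (i) a normal-form lemma, that every cone arising in the process is, after a change of lattice basis, again of the shape $\sigma_{n',r_1',\dots,r_{m'}'}$ with $m'+n'=n+1$; (ii) an exact characterization of which cones of this shape are regular (multiplicity $(r_{i_0}')^{n'}$ when a single exponent survives, never simplicial with two or more positive exponents); (iii) the observation that a non-regular cone of this type always admits a fan-changing subdivision while any subdivision of a regular one is a no-op, so ``terminal'' equals ``regular''; and (iv) a well-founded decreasing invariant $(n',\sum_i r_i')\in(\N^2,<_{\mathrm{lex}})$ with the multiset (Dershowitz--Manna) ordering, proving that \emph{any} sequence of such subdivisions is finite. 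This is more work but buys you two things: it is self-contained (no reliance on Hartl beyond \Cref{lem:localmodel}(c)), and it proves the proposition's literal claim about \emph{any} sequence of subdivisions rather than only the specific sequence Hartl's algorithm chooses.

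Your computations check out; one small caveat worth flagging: for the $j=0$ case the change of basis is not just $h_l\mapsto h_l-h_{l_0}$ — the $g$-vectors must also be shifted (e.g. $g_i:=e_i+r_if_{l_0}$, with an extra $g_{m'+1}:=e_1+r_1f_{l_0}$) to recover the $\sigma_{n',r'}$ shape, since the remark's $\sigma'$ has no bare $e_i$ generators at all. You rightly identify this normal-form bookkeeping as the delicate point; once it is in place the rest is, as you say, short and formal.
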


\begin{remark}
    Note that the subdivision from \Cref{lem:localmodel} (c) corresponds to the blow-up of the irreducible components of the special fibre which is the procedure in \Cref{prop:Hartlresolution}.
\end{remark}

\begin{proof}
    The termination of this process is precisely \cite[Proposition 2.2]{Har01} which yields a resolution after some finite ramified base change, see also \Cref{prop:Hartlresolution}. Since the total space $\Tilde{\mathfrak{X}}$ of a strictly semi-stable model is regular, the local model of the toric variety is regular. Alternatively, the explicit local descriptions and the termination condition of the algorithm in \cite[Proposition 2.2]{Har01} imply also the regularity of the refined fan.

    The last statement follows from \Cref{lem:localmodel} (c) by induction. Indeed, starting with $\sigma = \sigma_{n,r}$ for some $n, r \in \N$ we see that the minimal generators of each rays is of the form $e_1$ or $e_1 + r f_j$ for some $j = 1,\dots,n$, i.e. they lie in the hyperplane $\{e_1 = 1\}$ and are lattice points. In the $j$-th step, we add a new vector $e_{j+1} := e_{j'} + f_{k'}$ for some $j' = 1, \dots,j$. $k' = 1,\dots,n$. The new minimal generators after the blow-up are of the form $$
        e_{j+1} + r_{j+1} f_{k}
    $$
    for some $k = 1,\dots,n$ and $r_{j+1} \in \Z_{\geq 0}$. Thus, we see by induction that all minimal generators lie in the hyperplane $\{e_1 = 1\}$ and are lattice points.
\end{proof}

\section{Chow group of the resolution}\label{sec:CH1resolution}

We recall the statement of \Cref{thm:main} and the strategy laid out in the introduction. Let $\mathfrak X \to \Spec R$ be a strictly semi-stable scheme over a dvr $R$ with algebraically closed residue field. Assume that the geometric generic fibre of $\mathfrak X \to \Spec R$ admits a decomposition of the diagonal. Then the complex
\begin{equation}\label{eq:complex2}
    \bigoplus\limits_{j \in I} \CH_1\left(Y_j\right) \overset{\Phi}{\longrightarrow} \CH_0\left(Y_i\right) \xrightarrow[]{\sum_i \deg} \Z \longrightarrow 0
\end{equation}
is exact (after any base change to a field extension $L'/k$), where $\Phi = \sum\limits_{i \in I} \sum\limits_{j \in I} \iota_i^\ast (\iota_j)_\ast$ with $\iota_i \colon Y_i \hookrightarrow \mathfrak X$ the natural inclusions. The assumption on the geometric generic fibre implies the exactness of the complex for a strictly semi-stable family $\Tilde{\mathfrak X} \to \Spec \Tilde{R}$ which is obtained by a finite (possibly ramified) base change and a resolution as in \Cref{prop:Hartlresolution}. We want to relate the complex \eqref{eq:complex2} for $\Tilde{\mathfrak X}$ with the complex for $\mathfrak X$ such that we can deduce from the exactness of the first the exactness of the latter, which then proves \Cref{thm:main}. In this section, we describe the special fibre $\Tilde{Y}$ of $\Tilde{\mathfrak X} \to \Spec \Tilde{R}$ and express its Chow group of one-cycles in terms of cycles supported $Y$. Before that we introduce some necessary notation regarding simplicial complexes and we define \emph{refinements} of them, which are a good frame work for a description of $\CH_1(\Tilde{Y})$. This description is then used in \Cref{sec:analysis} to relate the complexes of $\mathfrak X$ and $\Tilde{\mathfrak X}$.

\begin{definition}[Abstract simplicial complex, see {\cite[page 15]{Mun84}}]\label{def:simplicialcomplex}
\begin{enumerate}[label=(\roman*)]
    \item An \emph{abstract simplicial complex} $\mathcal{C}$ is a collection of non-empty finite sets, called \emph{simplices}, such that any non-empty subset $\sigma' \subset \sigma$ of a simplex $\sigma \in \mathcal{C}$ is contained in $\mathcal{C}$. We call a subset $\sigma' \subset \sigma$ a \emph{face} of $\sigma$ and a singleton set $\{v\} \in \mathcal{C}$ is called a \emph{vertex}.
    \item The dimension $\dim \sigma$ of a simplex $\sigma \in \mathcal{C}$ is defined as
    $$
        \dim \sigma = \# \sigma - 1,
    $$
    where $\# \sigma$ is the cardinality of the set $\sigma$. A $d$-dimensional simplex $\sigma$ is called \emph{d-simplex} and the set of $d$-simplices is denoted by $\mathcal{C}(d)$. For a vertex $\{v\} \in \mathcal{C}(0)$, we usually write $v \in \mathcal{C}(0)$.
    \item A subcollection $\mathcal{C}'$ of an abstract simplicial complex $\mathcal{C}$ is called an \emph{abstract simplicial subcomplex} (or simply \emph{subcomplex}) if $\mathcal{C}'$ is itself an abstract simplicial complex. In particular, for every $m \in \N$ the subcollection of all simplices of $\mathcal{C}$ of dimension at most $m$ is a subcomplex which we call the $m$-skeleton of $\mathcal{C}$.
\end{enumerate}
\end{definition}

Recall from graph theory: A vertex $v'$ in a graph is adjacent to a vertex $v$ if there exists an edge with endpoints $v$ and $v'$. We extend this definition to simplicial complexes by considering the $1$-skeleton of the simplicial complex which is a graph. 

\begin{definition}[Adjacent vertices]\label{def:adjacentvertices}
    Let $\mathcal{C}$ be a simplicial complex and let $v \in \mathcal{C}(0)$ be a vertex. We define the \emph{set of adjacent vertices} of $v$ in $\mathcal{C}$ as \begin{equation}\label{eq:adjacentvertices}
        A_{\mathcal{C}}(v) := \{w \in \mathcal{C}(0) : \{v,w\} \in \mathcal{C}(1)\}.
    \end{equation}
\end{definition}

Moreover, we define walls of a simplicial complex inspired by the definition of walls for toric varieties, see e.g. \cite[Section 10.5]{Dan78}.

\begin{definition}[Walls]
    Let $\mathcal{C}$ be a simplicial complex of dimension $n$, i.e. every simplex of $\mathcal{C}$ has dimension at most $n$ and at least one simplex has dimension $n$. A simplex $\tau \in \mathcal{C}(n-1)$ is a \emph{wall} of $\mathcal{C}$ if $\tau$ is a common face of two simplices $\sigma_1, \sigma_2 \in \mathcal{C}(n)$. Equivalently if $\tau$ is the (set-theoretic) intersection of the simplices $\sigma_1, \sigma_2$.
\end{definition}

The next definition is a slight adaption of the definition of quasi-geometric simplicial subdivisions in the sense of \cite[Definition 4.1 a) and first paragraph in section 2]{Sta92}. The assumptions on the geometric realization of the simplicial complexes are removed in our definition as our argument involves only the abstract simplicial complex. The name is borrowed from toric geometry as we construct a refinement of an simplicial complex by using local toric refinements, see \Cref{const:refinement}.

\begin{definition}[Refinement of an abstract simplicial complex]\label{def:refinement}
    Let $\mathcal{C}$ be an abstract simplicial complex. For a simplex $\sigma \in \mathcal{C}$, we denote by $\mathcal{C}_\sigma$ the subcomplex of $\mathcal{C}$ given by the simplex $\sigma$ and all its faces. A \emph{refinement} of $\mathcal{C}$ is an abstract simplicial complex $\mathcal{C}'$ together with a map of sets $\psi \colon \mathcal{C}' \to \mathcal{C}$ such that for all $\sigma \in \mathcal{C}$, the preimage $\psi^{-1}(\mathcal{C}_\sigma)$ is a non-empty finite simplicial subcomplex of $\mathcal{C}'$ of dimension at most $\dim \sigma$. To simplify the notation, we usually write $\psi^\ast(\sigma)$ instead of $\psi^{-1}(\mathcal{C}_\sigma)$.
\end{definition}

For such a refinement we define a relative version of adjacent vertices and walls which become useful later on.

\begin{definition}\label{def:notationSimplices}
    Let $\psi \colon \mathcal{C}' \to \mathcal{C}$ be a refinement of the abstract simplicial complex $\mathcal{C}$ in the above sense.
    \begin{enumerate}[label=(\roman*)]
        \item For any vertex $v' \in \mathcal{C}'(0)$ we define the \emph{set of relative adjacent vertices}
        $$
            A_\psi(v') := \begin{cases}
                A_{\mathcal{C}}(\psi(v')) &\text{if } \psi(v') \in \mathcal{C}(0), \\
                \emptyset & \text{otherwise,}
            \end{cases}
        $$
        where $A_{\mathcal{C}}$ is defined in \eqref{eq:adjacentvertices}.
        \item A \emph{relative wall} $\tau \in \mathcal{C}'$ of $\psi$ is a wall of the subcomplex $\psi^\ast(\sigma)$ for some simplex $\sigma \in \mathcal{C}$ such that $\dim \psi^\ast(\sigma) = \dim \sigma$. We denote the set of relative walls of the refinement $\psi \colon \mathcal{C}' \to \mathcal{C}$ by $\Wall(\psi)$.
    \end{enumerate}
\end{definition}

\begin{definition}[Associated simplicial complex]\label{def:associatedcomplex}
    Let $\mathfrak{X} \to \Spec R$ be a strictly semi-stable $R$-scheme and let $Y = \bigcup_{i \in I} Y_i$ be the special fibre of the family $\mathfrak{X} \to \Spec R$. We define the \emph{simplicial complex associated to} $\mathfrak{X} \to \Spec R$ as the simplicial complex $\mathcal{C}_{\mathfrak{X}}$ of the finite set $I$ satisfying $J \in \mathcal{C}_\mathfrak{X}$ for $J \subset I$ if and only if $\bigcap\limits_{j \in J} Y_j \neq \emptyset$. For any simplex $J \in \mathcal{C}_{\mathfrak{X}}$ we define $Y_{J}$ as the corresponding intersection $\bigcap\limits_{j \in J} Y_j$.
\end{definition}

\begin{remark}\label{rem:observation}
    A simplex $\sigma \in \mathcal{C}_{\mathfrak X}$ represents by definition an intersection $Y_J = \bigcap\limits_{j \in J} Y_j$.Then the intersection $Y_J$ is \'{e}tale locally the (affine) toric variety associated to the cone $\sigma_{n,1}$ from \Cref{lem:localmodel} with $m = 1$, where $n = \dim \sigma$.
\end{remark}

\begin{construction}\label{const:refinement}
    Let $\mathfrak{X} \to \Spec R$ be a strictly semi-stable $R$-scheme and $\Tilde{R}/R$ be a finite ramified extension of dvr's with ramification index $r$. Then the base-change $\mathfrak{X}_{\Tilde{R}} \to \Spec \Tilde{R}$ corresponds \'{e}tale locally to the cone $\sigma_{n,r}$ from \Cref{lem:localmodel} with the obvious morphism of lattices, namely scaling each generator of $N$ by $\frac{1}{r}$ except the first one. (Note that this morphism is compatible with the cones $\sigma_{n,r}$ and $\sigma_{n,1}$). Moreover, a resolution $\Tilde{\mathfrak{X}} \to \Spec \Tilde{R}$ of $\mathfrak{X}_{\Tilde{R}}$ in \Cref{prop:Hartlresolution} corresponds to a refinement as in \Cref{prop:refinement}. In particular, we find that the simplicial complex $\mathcal{C}_{\Tilde{\mathfrak{X}}}$ associated to the strictly semi-stable family $\Tilde{\mathfrak X} \to \Spec \Tilde{R}$ is a refinement of the (abstract) simplicial complex $\mathcal{C}_{\mathfrak{X}}$ as defined in \Cref{def:refinement} and we denote the map by $\psi \colon \mathcal{C}_{\Tilde{\mathfrak{X}}} \to \mathcal{C}_{\mathfrak{X}}$. Indeed, each cone associated to a simplex in $\mathcal{C}_{\mathfrak{X}}$ is replaced by a fan after the resolution which yields a simplicial complex by \Cref{prop:refinement}.
\end{construction}

\begin{remark}
    Note that this correspondence does not hold in each step of the refinement as the cones obtained along this process are not simplicial in general. Hence, we use the (global) description with simplicial complexes only for the strictly semi-stable model $\Tilde{\mathfrak{X}} \to \Spec \Tilde{R}$ and $\mathfrak{X} \to \Spec R$.
\end{remark}

\begin{remark}\label{rem:uniquevertex}
    From the above construction, we see that for every vertex $v \in \mathcal{C}_{\mathfrak X}$ there exists a unique vertex $\Tilde{v} \in \mathcal{C}_{\Tilde{\mathfrak X}}$ such that $\psi(\Tilde{v}) = v$. We denote this unique vertex usually by $\psi^\ast(v)$.
\end{remark}

We describe the new components appearing in the resolution process of \Cref{prop:Hartlresolution}, which we described \'{e}tale locally in \Cref{sec:toricresolution}. This enables us to explicitly describe the Chow group of one-cycles of the special fibres of the resolution, see \Cref{prop:CH1}.

\begin{proposition}\label{prop:simplicialcomplex}
    With the same notation as in \Cref{setup}, let $\psi \colon \mathcal{C}_{\Tilde{\mathfrak X}} \to \mathcal{C}_{\mathfrak X}$ be the refinement of simplicial complexes constructed in \Cref{const:refinement}. Recall that each component in $Y$ and $\Tilde{Y}$ corresponds to a vertex in $\mathcal{C}_{\mathfrak X}$ and $\mathcal{C}_{\Tilde{\mathfrak X}}$, respectively. For every $\Tilde{v} \in \mathcal{C}_{\Tilde{\mathfrak X}}(0)$, the irreducible component $\Tilde{Y}_{\Tilde{v}}$ is obtained by a finite sequence of blow-ups in smooth centers and Zariski locally trivial $\proj^1$-bundles of the base-change $(Y_{\psi(\Tilde{v})})_L$ where $$
        Y_{\psi(\Tilde{v})} = \bigcap\limits_{v \in \psi(\Tilde{v})(0)} Y_v.
    $$
\end{proposition}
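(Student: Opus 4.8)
The plan is to combine the étale-local toric picture of \Cref{sec:toricresolution} with a descent/globalization argument. The key observation (\Cref{rem:observation}) is that a vertex $\Tilde v \in \mathcal C_{\Tilde{\mathfrak X}}(0)$ lies over a simplex $\psi(\Tilde v) \in \mathcal C_{\mathfrak X}$, and that the intersection $Y_{\psi(\Tilde v)} = \bigcap_{v \in \psi(\Tilde v)(0)} Y_v$ is, étale locally, the affine toric variety $U_{\sigma_{n,1}}$ with $n = \dim \psi(\Tilde v)$. The refinement map $\psi$ was built in \Cref{const:refinement} from Hartl's resolution, which étale-locally is precisely the sequence of toric subdivisions of \Cref{lem:localmodel}~(c) applied to $\sigma_{n,r}$, and by \Cref{prop:refinement} the minimal generators of all rays in the refined fan $\Tilde\Sigma$ lie in the hyperplane $\{e_1 = 1\}$ and are lattice points. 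The ray corresponding to $\Tilde v$ is one of these, say generated by a vector of the form $e_1 + \sum_j c_j f_j$; the component $\Tilde Y_{\Tilde v}$ is the closure $V(\rho_{\Tilde v})$ of the associated torus orbit, and one reads off its toric structure from the star of $\rho_{\Tilde v}$ in $\Tilde\Sigma$.

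First I would reduce the statement to a purely toric claim: étale-locally, the orbit closure $V(\rho)$ of a ray $\rho$ appearing in the toric resolution of $U_{\sigma_{n,r}}$ is obtained from an affine space $\aff^n$ by a sequence of blow-ups in smooth (torus-invariant) centers followed by (or interspersed with) Zariski-locally-trivial $\proj^1$-bundle projections — and then I would package this so that, since the resolution $\Tilde{\mathfrak X} \to \mathfrak X$ is obtained by the global blow-up procedure of \Cref{prop:Hartlresolution} (blowing up irreducible components of special fibres, which are everywhere-smooth Cartier divisors), the corresponding global statement for $\Tilde Y_{\Tilde v} \to (Y_{\psi(\Tilde v)})_L$ follows. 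Concretely: each blow-up $V^{i+1} \to V^i$ in \Cref{prop:Hartlresolution} has a smooth center inside the special fibre, hence induces on each component either an isomorphism, a blow-up in a smooth center, or — when the center is that component itself — exhibits the strict transform's relationship to the exceptional divisor. The exceptional divisors are the "new" vertices of $\mathcal C_{\Tilde{\mathfrak X}}$ not coming from $\mathcal C_{\mathfrak X}$; the subdivision in \Cref{lem:localmodel}~(c) introduces $e_{m+1} = e_1 + f_1$ with $r_{m+1} = r_1 - 1$, and the orbit closure $V(\rho_{m+1})$ is, by the star construction, a $\proj^1$-bundle over the orbit closure of the wall it bisects (this is the toric incarnation of "the exceptional divisor of blowing up a codimension-two smooth center is a $\proj^1$-bundle over the center"). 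Iterating through the $m$ steps of the resolution algorithm, tracking at each stage whether $\Tilde v$'s ray is newly created or already present, yields $\Tilde Y_{\Tilde v}$ as an iterated tower of blow-ups in smooth centers and $\proj^1$-bundles over $(Y_{\psi(\Tilde v)})_L$.

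I expect the main obstacle to be the bookkeeping that globalizes the étale-local toric description — in particular, making precise that the étale-local isomorphisms $Y_{\psi(\Tilde v)} \sim_{\text{ét}} U_{\sigma_{n,1}}$ and the étale-local toric resolution glue to a global description of $\Tilde Y_{\Tilde v} \to (Y_{\psi(\Tilde v)})_L$ as the claimed tower. The point is that "blow-up in a smooth center" and "Zariski-locally-trivial $\proj^1$-bundle" are properties that can be checked étale-locally (or Zariski-locally on the base), so one does not actually need a canonical global toric structure — one only needs that at each step of the global blow-up sequence of \Cref{prop:Hartlresolution}, restricted to the relevant component, the morphism is of one of the two allowed types, which is exactly what the local toric model of \Cref{lem:localmodel}~(c) and \Cref{prop:refinement} certify. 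A secondary subtlety is the base-change to $L$: the components $Y_i$ are geometrically integral by the standing hypothesis in \Cref{setup}, so $(Y_{\psi(\Tilde v)})_L$ is again integral and the whole local analysis is insensitive to the residue-field extension $L/k$; I would remark this explicitly to justify writing $(Y_{\psi(\Tilde v)})_L$ rather than $Y_{\psi(\Tilde v)}$ on the target.
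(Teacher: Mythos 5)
Your proposal follows essentially the same route as the paper (iterate through Hartl's blow-up sequence, use the toric local model of \Cref{lem:localmodel} and \Cref{prop:refinement} to describe each step), but there is a genuine gap in the globalization step.

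You write that ``blow-up in a smooth center'' and ``Zariski-locally-trivial $\proj^1$-bundle'' are properties that can be checked étale-locally. For the second property this is \emph{false}: an étale-locally trivial $\proj^1$-bundle need not be Zariski-locally trivial. (A nonsplit conic bundle, or more generally a $\proj^1$-bundle with nontrivial Brauer class, is étale-locally trivial but not Zariski-locally trivial.) The toric local model only certifies that the new component is \emph{étale}-locally a $\proj^1$-bundle over the relevant intersection of strata, and you cannot upgrade this to Zariski-local triviality by descent alone. This is exactly the point the paper's proof addresses with an extra argument you do not supply: because the center of the blow-up at that step is one of the two components whose intersection forms the base, the exceptional $\proj^1$-bundle comes equipped with a \emph{section} (the proper transform of the blown-up component meets the exceptional divisor in a section over the base). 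A $\proj^1$-bundle with a section is the projectivization of a rank-two vector bundle and is therefore Zariski-locally trivial. Without this section argument, your proof only establishes that $\Tilde Y_{\Tilde v}$ is an iterated tower of blow-ups in smooth centers and \emph{étale}-locally trivial $\proj^1$-bundles over $(Y_{\psi(\Tilde v)})_L$, which is strictly weaker than the statement of \Cref{prop:simplicialcomplex} and, more to the point, would not suffice for the Chow-group computations in \Cref{prop:CH1}, where the splitting $\CH_1(P)\cong\CH_0(W)\oplus\CH_1(W)$ requires $P$ to be a projective bundle in the Zariski-local sense of \cite[Proposition 6.7 (e)]{Ful98}.
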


\begin{proof}
    This follows from the description of the resolution process in \cite[Proposition 2.2]{Har01} together with the construction of the simplicial complex $\mathcal{C}_{\mathfrak X}$ and the toric construction of its refinement. Indeed, Hartl's resolution process is iterative and in each step every component will be blown-up in a smooth (possibly empty) center and the new components in that step are \'{e}tale locally trivial $\proj^1$-bundles over the intersection of two components in the previous step, see \Cref{lem:localmodel} or \cite[Proof of Proposition 2.2]{Har01}. Since we blow-up in that step one of the two components, we see that the \'{e}tale locally trivial $\proj^1$-bundle admits a section. Thus it is a Zariski-locally trivial $\proj^1$-bundle over the intersection of two components in the previous step. Hence each component $\Tilde{Y}_{\Tilde{v}}$ for $\Tilde{v} \in \mathcal{C}_{\Tilde{\mathfrak X}}(0)$ is of the form described in the statement.
\end{proof}

\begin{definition}\label{def:notationNewParts}
    With the same notation as in \Cref{prop:simplicialcomplex}. Let $\tau \in \Wall(\psi)$ be a relative wall as defined in \Cref{def:notationSimplices} (ii). Let $\psi(\tau)$ be the image of $\tau$ under the refinement morphism $\psi \colon \mathcal{C}_{\Tilde{\mathfrak{X}}} \to \mathcal{C}_{\mathfrak{X}}$, i.e. the smallest cone containing $\tau$. Then $\tau$ corresponds via the toric description to a (Zariski) locally trivial $\proj^1$-bundle $P_\tau$ over $(Y_{\psi(\tau)})_L$. We denote the bundle projection by $\Tilde{q}_\tau \colon P_\tau \to (Y_{\psi(\tau)})_L$.

    Additionally, we fix for any $\tau \in \Wall(\psi)$ a vertex $v(\tau) \in \mathcal{C}_{\Tilde{\mathfrak{X}}}(0)$ such that $v(\tau)$ is a face of $\tau$. This implies via the toric description that the projective bundle $P_\tau$ lies inside $\Tilde{Y}_{v(\tau)}$ and we denote the natural inclusion by $\Tilde{\iota}_\tau \colon \Tilde{Y}_{v(\tau)} \hookrightarrow \Tilde{Y}$.
\end{definition}

\begin{proposition}\label{prop:CH1}
    With the same notation as in \Cref{prop:simplicialcomplex} and \Cref{def:notationNewParts}.
    \begin{enumerate}[label=(\alph*)]
        \item The Chow group $\CH_1(Y)$ of one-cycles on $Y$ is generated by cycles of the form
        \begin{equation}\label{eq:onecycleY}
            \gamma = \sum\limits_{v \in \mathcal{C}_{\mathfrak{X}}(0)} (\iota_v)_\ast \gamma_v \in \CH_1(Y),
        \end{equation}
        where $\gamma_v \in \CH_1(Y_v)$ is a one-cycle and $\iota_v \colon Y_v \hookrightarrow Y$ is the natural inclusion.
        \item The Chow group $\CH_1(\Tilde{Y})$ of one-cycles on $\Tilde{Y}$ is generated by cycles of the form
        \begin{equation}\label{eq:onecycleTildeY}
            \Tilde{\gamma} = \sum\limits_{v \in \mathcal{C}_{\mathfrak{X}}(0)} (\Tilde{\iota}_v)_\ast \Tilde{\gamma}_v + \sum\limits_{\tau \in
                \Wall(\psi)} (\Tilde{\iota}_\tau)_\ast \Tilde{q}_\tau^\ast \Tilde{\alpha}_\tau \in \CH_1(\Tilde{Y}),
        \end{equation}
        where $\Tilde{\gamma}_v \in \CH_1((Y_v)_L)$ is a one-cycle, $\Tilde{\iota}_v \colon \Tilde{Y}_v \hookrightarrow \Tilde{Y}$ is the natural inclusion, $\Tilde{\alpha}_\tau \in \CH_0\left((Y_{\psi(\tau)})_L\right)$ is a zero-cycle, and $\Tilde{\iota}_\tau \colon \Tilde{Y}_{v(\tau)} \hookrightarrow \Tilde{Y}$ and $\Tilde{q}_\tau \colon P_{\tau} \to (Y_{\psi(\tau)})_L$ are the natural morphisms, see also \Cref{def:notationNewParts}.
    \end{enumerate} 
\end{proposition}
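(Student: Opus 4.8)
The plan is to derive both parts from the elementary fact that, for a scheme $W = \bigcup_i W_i$ with irreducible components $W_i$, the group $\CH_l(W)$ is generated by the images of the $\CH_l(W_i)$ under the closed immersions $W_i \hookrightarrow W$, since any integral $l$-cycle of $W$ lies in some $W_i$. Applied to $Y = \bigcup_{v \in \mathcal{C}_{\mathfrak X}(0)} Y_v$ this gives part (a) at once, and the argument survives base change to any $L'/k$ verbatim. For part (b) the same principle shows that $\CH_1(\tilde Y)$ is generated by $\sum_{\tilde v \in \mathcal{C}_{\tilde{\mathfrak X}}(0)} (\tilde\iota_{\tilde v})_\ast \CH_1(\tilde Y_{\tilde v})$, so it remains to describe each $\CH_1(\tilde Y_{\tilde v})$ and to re-package these generators into the form \eqref{eq:onecycleTildeY}.

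By \Cref{prop:simplicialcomplex}, every component $\tilde Y_{\tilde v}$ arises from $B_{\tilde v} := (Y_{\psi(\tilde v)})_L$ by a finite tower of blow-ups in smooth centres and Zariski-locally trivial $\proj^1$-bundles, the bundle steps carrying sections (see the proof of \Cref{prop:simplicialcomplex}). Applying the projective-bundle formula and the blow-up formula for Chow groups inductively along this tower, $\CH_1(\tilde Y_{\tilde v})$ is generated by ``horizontal'' classes, namely one-cycles on $B_{\tilde v}$ realised on $\tilde Y_{\tilde v}$ through the chosen bundle sections and the strict transforms of the blow-up steps, together with ``vertical'' classes, namely the classes of the rational curves contracted by $\tilde Y_{\tilde v} \to B_{\tilde v}$ (fibres of the $\proj^1$-bundle steps over closed points and lines in the fibres of the exceptional projective bundles of the blow-up steps). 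To handle the vertical classes I pass to the \'{e}tale-local toroidal models of \Cref{sec:toricresolution}: by \Cref{prop:refinement} the special fibre of such a model is the union of \emph{all} its toric divisors, so each contracted torus-invariant curve is the orbit closure $V(\tau)$ of a wall $\tau$ of the refined fan; these walls glue to the relative walls $\tau \in \Wall(\psi)$, and, by \Cref{def:notationNewParts} and the orbit-cone correspondence, the curves $V(\tau)$ assemble into the $\proj^1$-bundle $P_\tau \to (Y_{\psi(\tau)})_L$. Hence every vertical class is a sum of classes $\tilde q_\tau^\ast\tilde\alpha_\tau$ with $\tilde\alpha_\tau \in \CH_0((Y_{\psi(\tau)})_L)$; because a fibre of $P_\tau$ over a fixed point represents the same class in $\CH_1(\tilde Y)$ whatever component containing $\tau$ it is read off from, we may put all of them in the distinguished component $\tilde Y_{v(\tau)}$, which gives exactly the second sum of \eqref{eq:onecycleTildeY}.

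There remain the horizontal classes. When $\tilde v = \psi^\ast(v)$ is the unique vertex over an old vertex $v$ (\Cref{rem:uniquevertex}), one has $B_{\tilde v} = (Y_v)_L$ and these are precisely the terms $(\tilde\iota_v)_\ast\tilde\gamma_v$. When $\tilde v$ is a new vertex, so $w := \psi(\tilde v)$ has dimension $\geq 1$ and $B_{\tilde v} = (Y_w)_L$ with $Y_w = \bigcap_{u \in w(0)} Y_u$, I would use the explicit charts of \Cref{lem:localmodel} and \Cref{prop:refinement} to identify the section through which a horizontal cycle on $\tilde Y_{\tilde v}$ is realised with a birational model of an intersection $\tilde Y_{\tilde v} \cap \tilde Y_{\tilde v'}$, where $\tilde v'$ is a vertex with $\psi(\tilde v')$ a proper face of $w$. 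Such a horizontal one-cycle is then rationally equivalent, modulo vertical classes, to a one-cycle supported on $\tilde Y_{\tilde v} \cap \tilde Y_{\tilde v'}$, whose push-forward to $\tilde Y$ equally factors through $\tilde Y_{\tilde v'}$; decomposing that cycle again into horizontal and vertical parts on $\tilde Y_{\tilde v'}$ and recursing on $\dim\psi(\cdot)$, which strictly decreases, the process terminates at old vertices and leaves only terms of the two allowed shapes. I expect this last reorganisation for the new components to be the main obstacle: one must pin down, inside each component, the toric divisor playing the role of the section and verify that it is the intersection with a component lying over a smaller stratum, and it is here that the concrete toroidal geometry of \Cref{sec:toricresolution} is indispensable rather than merely the coarse statement of \Cref{prop:simplicialcomplex}. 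By contrast, the identification of the vertical classes with the $P_\tau$-fibres and the independence of a fibre class of the chosen adjacent component are routine consequences of the orbit-cone correspondence and \Cref{prop:intersectionformula}.
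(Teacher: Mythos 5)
Your overall plan matches the paper's proof closely: generate $\CH_1(\tilde{Y})$ by the $\CH_1$ of components, run the blow-up and $\proj^1$-bundle formulas along the tower of \Cref{prop:simplicialcomplex}, identify the $\CH_0$-summands with fibres of the $P_\tau$ via the orbit--cone correspondence (your ``vertical'' classes), push the remaining $\CH_1$-part along a section into a neighbouring component, and recurse. The one concrete error is the claim that the section of the $\proj^1$-bundle step always lands in a component $\tilde{Y}_{\tilde{v}'}$ with $\psi(\tilde{v}')$ a \emph{proper} face of $\psi(\tilde{v})$, so that your recursion decreases $\dim\psi$. This fails already in the one-dimensional local model $\sigma_{1,r}$ of \Cref{lem:localmodel} once $r$ is large enough: Hartl's algorithm can insert a new exceptional ray $e_1 + 2f_1$ into a subcone whose two generating rays $e_1 + f_1$ and $e_1 + 3f_1$ both belong to \emph{earlier} exceptional components, so that both torus-invariant sections of $P_{\tilde{v}}$ lie in components with $\psi$-image equal to the whole edge $\{1,2\} = \psi(\tilde{v})$; no choice of section makes $\dim\psi$ drop.

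The paper instead inducts on the step index of the resolution algorithm of \Cref{prop:Hartlresolution}. At the step where $\tilde{Y}_{\tilde{v}}$ is created, the base $W_{\tilde{v}}$ of its $\proj^1$-bundle is the intersection of two components of the \emph{previous-step} special fibre, and the section maps $\CH_1(W_{\tilde{v}})$ into the previous-step incarnation of the blown-up component; since the step index strictly decreases and the algorithm has finitely many steps, the induction terminates at step zero, where the components are the $(Y_v)_L$ for $v \in \mathcal{C}_{\mathfrak{X}}(0)$. Replacing your $\dim\psi$-recursion with this step-index recursion closes the gap; apart from that, your outline (including identifying vertical classes with the walls and reassigning each $P_\tau$-fibre to the distinguished component $\tilde{Y}_{v(\tau)}$) is the paper's argument.
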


\begin{remark}
\begin{enumerate}[label=(\alph*)]
    \item This says that the Chow group of one-cycles on $\Tilde{Y}$ is governed by $\CH_1(Y)$ and by a toric part. Indeed, we can see \eqref{eq:onecycleTildeY} as
    $$
        \Tilde{\gamma} = \underbrace{\sum\limits_{v \in \mathcal{C}_{\mathfrak{X}}(0)} (\Tilde{\iota}_v)_\ast \Tilde{\gamma}_v}_{\text{from } \CH_1(Y)} + \underbrace{\sum\limits_{\tau \in \Wall(\psi)} (\Tilde{\iota}_\tau)_\ast \Tilde{q}_\tau^\ast \Tilde{\alpha}_\tau.}_{\text{``toric part''}}
    $$
    Morally speaking, the difference between \eqref{eq:onecycleY} and \eqref{eq:onecycleTildeY} lies in this toric part.
    \item Two examples of such a description can be found in \Cref{appendix}.
\end{enumerate}
\end{remark}

\begin{proof}
    Since any prime one-cycle, i.e. every irreducible, 1-dimensional subvariety, is contained in an irreducible component, statement (a) follows immediately. The same argument shows that $\CH_1(\Tilde{Y})$ is generated by one-cycles on the components of $\Tilde{Y}$. We show statement (b) by using the structure of the components from \Cref{prop:simplicialcomplex}: Recall the following two standard facts about Chow groups, see e.g. \cite[Theorem 3.3 (b) and Proposition 6.7 (e)]{Ful98}:
    \begin{enumerate}[label=(\roman*)]
        \item Let $P = \proj(E) \to W$ be the projectivization of a vector bundle $E$ on a smooth variety $W$ of relative dimension $\geq 2$. Then
        $$
            \CH_1(P) \cong \CH_0(W) \oplus \CH_1(W), \qquad \CH_0(P) \cong \CH_0(W),
        $$
        where the isomorphisms are given by pulling-back a cycle on $W$ along the flat map $P \to W$ and intersecting with a suitable power of the canonical line bundle $\mathcal{O}_{\proj(E)}(1)$ which is dual to the pull-back of the vector bundle $E$ on $W$ to $P = \proj(E)$.
        \item Let $\Tilde{W}$ be the blow-up of a smooth variety $W$ along a smooth subvariety $Z$. Then
        $$
            \CH_1(\Tilde{W}) \cong \CH_0(Z) \oplus \CH_1(W),
        $$
        where the isomorphism is given by applying the isomorphism from (i) to the projective bundle $\mathbb{P}(\mathcal{N}_{Z/W}) \to Z$ and pushing the one-cycle forward to $\Tilde{W}$ as well as pushing a one-cycle on $W \setminus Z$ forward along the natural morphism $W \setminus Z \to \Tilde{W}$. 
    \end{enumerate}
    Let $\Tilde{v} \in \mathcal{C}_{\Tilde{\mathfrak X}}(0)$ be a vertex. Assume first $\Tilde{v} = \psi^\ast(v) \in \psi^\ast \mathcal{C}_{\mathfrak X}(0)$. (Note that $\psi^\ast(v)$ consists of a single vertex by \Cref{rem:uniquevertex}.) By \Cref{prop:simplicialcomplex}, the component $\Tilde{Y}_{\Tilde{v}}$ of $\Tilde{Y}$ is a sequence of smooth blow-ups over $\left(Y_v\right)_L$, i.e. $$
        \Tilde{Y}_{\Tilde{v}} = \blowup_{Z_r} \blowup_{Z_{r-1}} \cdots \blowup_{Z_1} \left(Y_v\right)_L
    $$
    where $Z_1,\dots,Z_r$ are the smooth centers of the blow-ups. Then by fact (ii) we find 
    \begin{equation}\label{eq:isoCH1}
        \CH_1(\Tilde{Y}_{\Tilde{v}}) \cong \bigoplus\limits_{i=1}^r \CH_0(Z_i) \oplus \CH_1\left(\left(Y_v\right)_L\right).
    \end{equation}
    The $\CH_0(Z_i)$ parts correspond under the orbit-cone correspondence to some relative walls. Indeed, every zero-cycle is a (formal) linear combination of closed points. For each closed point the image under the isomorphism \eqref{eq:isoCH1} is the one-cycle given by pulling back the point to the corresponding $\proj^1$-bundle over $Z_i$ and pushing the one-cycle forward to $\Tilde{Y}_{\Tilde{v}}$ via the natural inclusion. This one-cycle is an irreducible rational curve which corresponds via the orbit-cone correspondence to a relative wall. Hence, we see that $\CH_1(\Tilde{Y}_{\Tilde{v}})$ is described by one-cycles corresponding to relative walls and $\CH_1\left(\left(Y_v\right)_L\right)$.

    We consider now the case $\Tilde{v} \notin \psi^\ast \mathcal{C}_{\mathfrak X}(0)$, i.e. the simplex $\sigma = \psi(\Tilde{v}) \in \mathcal{C}_{\mathfrak X}$ satisfies $\dim(\sigma) \geq 1$. By \Cref{prop:simplicialcomplex}, $\Tilde{Y}_{\Tilde{v}}$ is a sequence of smooth blow-ups and projective bundles over some $(Y_\sigma)_L$, i.e.
    $$
        \Tilde{Y}_{\Tilde{v}} = \blowup_{Z_r} \blowup_{Z_{r-1}} \cdots \blowup_{Z_1} P_{\Tilde{v}}
    $$
    where $Z_1,\dots,Z_r$ are the smooth centers of blow-ups and $P_{\Tilde{v}}$ is a Zariski locally trivial $\proj^1$-bundle over $W_{\Tilde{v}}$ such that the intersection with a different component $\Tilde{Y}_{\Tilde{v}'}$ yields a section. Then the facts (i) and (ii) above imply
    \begin{equation*}
        \CH_1(\Tilde{Y}_{\Tilde{v}}) \cong \bigoplus\limits_{i=1}^r \CH_0(Z_i) \oplus \CH_0(W_{\Tilde{v}}) \oplus \CH_1(W_{\Tilde{v}}).
    \end{equation*}
    The latter part $\CH_1(W_{\Tilde{v}})$ is contained in $\CH_1(Y_{\Tilde{v}'})$ and by the same argument as in the case $\Tilde{v} \in \psi^\ast \mathcal{C}_{\mathfrak X}(0)$ we find that the $\CH_0(Z_i)$ parts and $\CH_0(W)$ part correspond via the orbit-cone correspondence to some relative walls (in the above sense). Hence, we see that $\CH_1(\Tilde{Y}_{\Tilde{v}})$ is described by one-cycles corresponding to relative walls and the $\CH_1$ which appear in a previous step of \Cref{prop:Hartlresolution}. Inductively we get that the $\CH_1$ of the ``new'' components (corresponding to vertices in $\mathcal{C}_{\Tilde{\mathfrak X}}(0) \setminus \psi^{\ast}\mathcal{C}_{\mathfrak X}(0)$) is described by one-cycles corresponding to the relative walls and $\CH_1((Y_v)_L)$ for $v \in \mathcal{C}_{\mathfrak{X}}$. The one-cycles corresponding to the relative walls and the one-cycles supported on $(Y_v)_L$ might appear in multiple components, but they need to be counted once, as for an irreducible subvariety $Z$ of $\Tilde{Y}_v$ and $\Tilde{Y}_w$ the diagram with the natural inclusions
    $$
        \begin{tikzcd}
            Z \arrow[d,hook] \arrow[r,hook] & \Tilde{Y}_v \arrow[d,hook] \\
            \Tilde{Y}_w \arrow[r,hook] & \Tilde{Y},
        \end{tikzcd}
    $$
    commutes. This proves part (b) of the proposition.
\end{proof}

\section{Analysis of the base-change}\label{sec:analysis}

We recall our setup (\Cref{setup}), which we fix throughout this entire section. Let $R$ be a discrete valuation ring with residue field $k$ (not necessary algebraically closed) and let $\mathfrak X \to \Spec R$ be a strictly semi-stable $R$-scheme with special fibre $Y$ whose irreducible components are geometrically integral. Let $\Tilde{R}/R$ be a finite extension of dvr's with induced extension $L/k$ of residue fields. Let $\Tilde{\mathfrak X} \to \Spec \Tilde{R}$ be a strictly semi-stable $\Tilde{R}$-scheme with special fibre $\Tilde{Y}$ which is a resolution of the base change $\mathfrak{X}_{\Tilde{R}}$ from \Cref{prop:Hartlresolution} and let $q \colon \Tilde{\mathfrak X} \to \mathfrak X$ denote the natural morphism and also the restriction to the special fibre $q \colon \Tilde{Y} \to Y$. Moreover, let $\mathcal{C} := \mathcal{C}_{\mathfrak X}$ and $\Tilde{\mathcal{C}} := \mathcal{C}_{\Tilde{\mathfrak X}}$ be the simplicial complexes associated to the strictly semi-stable families $\mathfrak X \to \Spec R$ and $\Tilde{\mathfrak X} \to \Spec \Tilde{R}$, respectively (see \Cref{def:associatedcomplex}). Let $\psi \colon \Tilde{\mathcal{C}} \to \mathcal{C}$ be the refinement of simplicial complexes constructed in \Cref{const:refinement}. Consider the complex
\begin{equation}\label{eq:complexforanalysis}
    \bigoplus\limits_{j \in I} \CH_1(Y_j) \overset{\Phi}{\longrightarrow} \bigoplus\limits_{i \in I} \CH_0(Y_i) \xrightarrow[]{\sum_i \deg} \Z \longrightarrow 0,
\end{equation}
where $Y_i \subset Y$ are the irreducible components of $Y$ with $\iota_i \colon Y_i \hookrightarrow \mathfrak X$ the natural inclusions and $\Phi = \sum\limits_{i \in I} \sum\limits_{j \in I} \iota_i^\ast (\iota_j)_\ast$. In this section we relate the complex \eqref{eq:complexforanalysis} with the corresponding complex for $\Tilde{\mathfrak X}$. This relays on the following two auxiliary functions.

\begin{construction}\label{const:distance}
    We construct a function \begin{equation}\label{eq:distance}
        d \colon \Tilde{\mathcal{C}}(0) \times \mathcal{C}(0) \longrightarrow \Z_{\geq 0},
    \end{equation} which measures a ``distance'' between the vertices of the simplicial complex $\Tilde{C}$ to the vertices of $\mathcal{C}$.

    Let $v' \in \Tilde{\mathcal{C}}(0)$ be a vertex and let $\sigma = \psi(v') \in \mathcal{C}$. Then $\sigma$ corresponds by \Cref{const:refinement} to a cone $\sigma_{n,1}$ from \Cref{lem:localmodel} with $m = 1$. Let $w_0,w_1,\dots,w_n \in \sigma_{n,1}$ be the minimal generators of the cone $\sigma_{n,1}$, i.e. $w_i = e_1 + f_i$ in the notation of \Cref{lem:localmodel} (with $f_0 = 0$). The vertex $v'$ corresponds to a ray in the fan obtained by the resolution associated to $\psi$. Let $w'$ denote the minimal generator of this ray. Since this fan is a toric refinement of the cone $\sigma_{n,1}$ in the sense of \cite[before Example 3.3.12]{CLS11}, we can write
    $$
        w' = \sum\limits_{i=0}^n a_i w_i
    $$
    for some $a_i \geq 0$. As $\sigma_{n,1}$ (or equivalently $\sigma$) is simplicial, these $a_i$'s are unique. The $a_i$'s additionally satisfy \begin{equation}\label{eq:sumaiis1}
        \sum\limits_{i=0}^n a_i = 1,
    \end{equation}
    by \Cref{prop:refinement}. We define for any vertex $v \in \mathcal{C}(0)$
    \begin{equation}\label{eq:distancedef}
        d(v',v) = \begin{cases}
            r(1-a_i) & \text{if } v = v_i, \\
            r & \text{otherwise,}
        \end{cases}
    \end{equation}
    where $v_i \in \mathcal{C}(0)$ is the vertex associated to the minimal generator $w_i$ in $\sigma_{n,1}$. It remains to check that $d(v',v) \in \Z_{\geq 0}$ for all $v \in \mathcal{C}(0)$. If $v \neq v_i$ for all $i=0,1,\dots,n$, this is clear. For $i = 0,1,\dots,n$, we note that $0 \leq a_i \leq 1$, by \eqref{eq:sumaiis1}. Hence $d(v',v_i) \geq 0$ for all $i = 0,1,\dots,n$. Moreover, the point $w'$ is a lattice point in the sublattice generated by $e_1,\frac{1}{r}f_1,\frac{1}{r}f_2,\dots,\frac{1}{r}f_n$, see \Cref{prop:refinement}. Since $w_j = e_1 + f_j$, this immediately implies $a_i \in \frac{1}{r} \Z$. These two observations yield $d(v',v_i) \in \Z_{\geq 0}$ for all $i = 1,\dots,n$, i.e. we have a well-defined map of the form \eqref{eq:distance}.
\end{construction}

\begin{construction}\label{const:intersectionnumber}
    We construct a function \begin{equation}\label{eq:intersectionnumbers}
        I \colon \Wall(\psi) \times \Tilde{\mathcal{C}}(0) \longrightarrow \Z,
    \end{equation}
    which encodes the intersection numbers of one-cycles corresponding to the relative walls with the divisors corresponding to the vertices of the simplicial complex.

    Let $\sigma \in \mathcal{C}$ be a simplex and let $\tau' = \sigma'_1 \cap \sigma'_2 \in \Tilde{\mathcal{C}}$ be a wall of the simplicial subcomplex $\psi^\ast(\sigma)$, i.e. $\tau'$ is a relative wall of $\psi$. Let $d-1 = \dim \tau'$ be the dimension of the simplex $\tau'$. Let $v'_0,\dots,v'_{d}$ and $v'_1,\dots,v'_{d+1}$ be the vertices of $\sigma'_1$ and $\sigma'_2$, respectively. Recall once more that we associate to $\sigma$ the simplicial cone $\sigma_{d,1}$ from \Cref{lem:localmodel} with $m =1$. Then $\psi^\ast(\sigma)$ corresponds to a fan $\Sigma'$ which is a refinement of the cone $\sigma_{\dim \sigma,1}$. The simplices $\sigma_1', \sigma_2',$ and $\tau'$ correspond to cones of $\Sigma'$ of dimensions $d+1, d+1,$ and $d$ respectively. We denote the cones by $S_1', S_2',$ and $T'$ respectively. Note that $T' = S_1' \cap S_2'$. Recall also that the vertices $v'_0,\dots,v'_{d+1}$ correspond to rays $\rho'_0,\dots,\rho'_{d+1}$ of $S_1'$ or $S_2'$. We define for any $v' \in \Tilde{\mathcal{C}}(0)$
    \begin{equation}\label{eq:Intersectiondef}
        I_{\tau'}(v') := I(\tau',v') := \begin{cases}
            D_{\rho'_i} \cdot V(T') & \text{if } v' = v_i' \\
            0 & \text{otherwise,}
        \end{cases}
    \end{equation}
    where $D_{\rho'_i}$ is the divisor associated to the ray $\rho_i'$ and $V(T')$ is the Zariski closure of the orbit corresponding to $T'$ via the orbit-cone correspondence, see also \Cref{prop:intersectionformula}. It is obvious that $I_{\tau'}(v')$ is an integer, i.e. the function $I$ is well-defined as claimed in \eqref{eq:intersectionnumbers}.
\end{construction}

These two auxiliary functions satisfy the following crucial relation, which is deduced from the wall relation, see \eqref{eq:wallrelation}.

\begin{lemma}\label{lem:wallrelation}
    The functions $d$ defined in \Cref{const:distance} and $I$ defined in \Cref{const:intersectionnumber} satisfy for every $v \in \mathcal{C}(0)$ and $\tau' \in \Wall(\psi)$
    \begin{equation}\label{eq:wallrelation2}
            \sum\limits_{v' \in \Tilde{\mathcal{C}}(0)} \left(r-d(v',v)\right) I_{\tau'}(v') = 0.
    \end{equation}
\end{lemma}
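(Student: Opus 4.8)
The plan is to unwind \Cref{const:distance} and \Cref{const:intersectionnumber} into a statement about one wall relation, and then to conclude by a single linear-algebra computation. Fix $v \in \mathcal{C}(0)$ and a relative wall $\tau' \in \Wall(\psi)$. By definition $\tau' = \sigma'_1 \cap \sigma'_2$ is a wall of $\psi^\ast(\sigma)$ for some $\sigma \in \mathcal{C}$ with $\dim\psi^\ast(\sigma) = \dim\sigma =: n$, so that $\sigma$ corresponds to the cone $\sigma_{n,1}$ in $N_\R = \R^{n+1}$ with minimal generators $w_0 = e_1$ and $w_i = e_1 + f_i$ ($i = 1,\dots,n$), and $\psi^\ast(\sigma)$ corresponds to a refinement $\Sigma'$ of $\sigma_{n,1}$ which is regular once we pass to the rescaled lattice of \Cref{const:refinement} (\Cref{prop:refinement}). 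Let $v'_0,\dots,v'_{n+1}$ be the vertices of $\sigma'_1$ and $\sigma'_2$, labelled as in \Cref{const:intersectionnumber} so that $v'_1,\dots,v'_n$ are the vertices of $\tau'$, with associated rays $\rho'_0,\dots,\rho'_{n+1}$ of $\Sigma'$ and minimal generators $w'_0,\dots,w'_{n+1} \in \sigma_{n,1}$. The $n+2$ vectors $w'_0,\dots,w'_{n+1}$ lie in $\R^{n+1}$, hence satisfy a wall relation $\sum_{j=0}^{n+1} b_j w'_j = 0$ as in \eqref{eq:wallrelation}, unique up to scaling; since $w'_1,\dots,w'_n$ span the cone of $\tau'$ and are therefore linearly independent, $b_0$ and $b_{n+1}$ are nonzero, and by regularity we may normalize $b_0 = b_{n+1} = 1$, as in \eqref{eq:reducedintersectionformula}.

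Next I would read off the two auxiliary functions along $v'_0,\dots,v'_{n+1}$. By \Cref{prop:intersectionformula} together with \eqref{eq:reducedintersectionformula}, one has $D_{\rho'_j}\cdot V(T') = b_j$ for every $j \in \{0,\dots,n+1\}$, while $D_\rho \cdot V(T') = 0$ for every ray $\rho \notin \{\rho'_0,\dots,\rho'_{n+1}\}$; hence $I_{\tau'}(v'_j) = b_j$ and $I_{\tau'}(v') = 0$ for all other $v' \in \Tilde{\mathcal{C}}(0)$. For the distance, observe that each $w'_j$ lies in $\sigma_{n,1}$, so by \Cref{prop:refinement} it has a unique expression $w'_j = \sum_{i=0}^n a_{j,i} w_i$ with $a_{j,i} \geq 0$ and $\sum_{i=0}^n a_{j,i} = 1$; by uniqueness this is precisely the decomposition used in \Cref{const:distance} for the vertex $v'_j$ (with $a_{j,i} = 0$ whenever $v_i \notin \psi(v'_j)(0)$). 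Consequently $r - d(v'_j, v_i) = r\,a_{j,i}$ for $i = 0,\dots,n$, and $r - d(v'_j, v) = 0$ whenever $v \notin \{v_0,\dots,v_n\}$, where $v_i \in \mathcal{C}(0)$ denotes the vertex associated to $w_i$.

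Finally I would substitute. Since $I_{\tau'}$ is supported on $\{v'_0,\dots,v'_{n+1}\}$,
$$
    \sum\limits_{v' \in \Tilde{\mathcal{C}}(0)} \bigl(r - d(v',v)\bigr)\,I_{\tau'}(v') = \sum\limits_{j=0}^{n+1} \bigl(r - d(v'_j,v)\bigr)\,b_j .
$$
If $v \notin \{v_0,\dots,v_n\}$ then every summand vanishes. If $v = v_\ell$ for some $\ell$, the right-hand side equals $r\sum_{j=0}^{n+1} a_{j,\ell}\,b_j$, and expanding the wall relation gives
$$
    0 = \sum\limits_{j=0}^{n+1} b_j w'_j = \sum\limits_{i=0}^{n} \Bigl(\sum\limits_{j=0}^{n+1} a_{j,i}\,b_j\Bigr) w_i .
$$
As $w_0,\dots,w_n$ are linearly independent ($\sigma$ being simplicial), each coefficient $\sum_{j} a_{j,i}\,b_j$ vanishes, in particular for $i = \ell$; hence the sum is $0$, which is \eqref{eq:wallrelation2}.

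The computation itself is short once the translation is in place, so the bulk of the work is the bookkeeping in the first two paragraphs: identifying $I$ with the (reduced) intersection formula for the refined fan, and identifying $d$ with the barycentric-type coordinates $a_{j,i}$ of the new rays inside $\sigma_{n,1}$. The one point that I expect to require genuine care is that $\Sigma'$ must be viewed over the rescaled lattice from \Cref{const:refinement}, so that it is regular and the normalized formula \eqref{eq:reducedintersectionformula} with $b_0 = b_{n+1} = 1$ is available; the compatibility of this normalization with $\sum_i a_{j,i} = 1$ is exactly what makes the two terms $r - d$ and $I$ line up.
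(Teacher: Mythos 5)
Your proposal is correct and follows essentially the same route as the paper's proof: expand the new ray generators $w'_j$ in the basis $w_0,\dots,w_n$ of $\sigma_{n,1}$, identify the barycentric coefficients $a_{j,i}$ with $r-d(v'_j,v_i)$ and the wall-relation coefficients $b_j$ with $I_{\tau'}(v'_j)$ via \eqref{eq:reducedintersectionformula}, and read off \eqref{eq:wallrelation2} from the wall relation and the linear independence of $w_0,\dots,w_n$. The only cosmetic difference is that you normalize $b_0=b_{n+1}=1$ up front, whereas the paper keeps the general wall-relation coefficients and absorbs the mismatch into a nonzero scalar $\lambda=b_0/r$; both are valid.
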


\begin{proof}
    With the same notation as in \Cref{const:intersectionnumber}, let $w_0',\dots,w_{d+1}'$ be the minimal generators of the rays $\rho_0',\dots,\rho_{d+1}'$. The fan $\Sigma'$ is a toric refinement of the cone $\sigma_{d,1}$ associated to $\sigma$ and let $w_1,\dots,w_{d+1}$ be the minimal generators of the cone. We write for $i = 0,\dots,d+1$ the minimal generators $w'_i$ as a linear combination
    \begin{equation}\label{eq:basis}
        w'_i = \sum\limits_{j=1}^{d+1} a_{ij} w_j,
    \end{equation}
    which uniquely exists, since $\sigma_1'$ and $\sigma_2'$ are contained in the simplex $\sigma$. Moreover, by the wall relation \eqref{eq:wallrelation} there exist (up to scaling) unique $b_0,\dots,b_{d+1}$ such that $$
        \sum\limits_{i=0}^{d+1} b_i w_i' = 0.
    $$
    Together with \eqref{eq:basis} we find
    \begin{align*}
        0 &= \sum\limits_{i=0}^{d+1} b_i w_i'
        = \sum\limits_{i=0}^{d+1} b_i \left(\sum\limits_{j=1}^{d+1} a_{ij} w_j\right)
        = \sum\limits_{j=1}^{d+1} \left(\sum\limits_{i=0}^{d+1} a_{ij} b_i \right) w_j \\
        &= \lambda \sum\limits_{j=1}^{d+1} \left(\sum\limits_{i=0}^{d+1} (r-d(v_j,v_i')) I_{\tau'}(v_i') \right) w_j, 
    \end{align*} 
    where $\lambda = \frac{b_0}{r} \neq 0$ is the necessary scaling due to the choice with regard to the $b_j$'s. Note that we used in the last step the definition of $d$ and $I$ together with \eqref{eq:reducedintersectionformula}. Since $I_{\tau'}(v') = 0$ for every $v'$ different to all ($v_i'$)'s, this shows \eqref{eq:wallrelation2} because the cone $\sigma$ is simplicial and $\lambda \neq 0$.
\end{proof}

In order to relate the complex \eqref{eq:complexforanalysis} with the complex for the resolution $\Tilde{X}$ after a finite base change, we replace the term $\bigoplus_j \CH_1(Y_j)$ in the complex \eqref{eq:complexforanalysis} with $\CH_1(Y)$ which does not change the exactness of the complex, see \Cref{rem:nochangeincomplex}.

\begin{definition}[{\cite[Definition 3.1]{PS23}}]\label{def:PSmap}
    Let $\mathcal{X} \to \Spec R$ be a strictly semi-stable $R$-scheme with special fibre $Y$. Denote the irreducible components of $Y$ by $Y_i$ with $i \in I$. Then we define $$
        \Phi_{\mathcal{X},Y_i} \colon \CH_1(Y) \xrightarrow[]{\iota_\ast} \CH_1(\mathcal{X}) \xrightarrow[]{\iota_i^\ast} \CH_0(Y_i),
    $$
    where $\iota \colon Y \to \mathcal{X}$ and $\iota_i \colon Y_i \to \mathcal{X}$ are the natural inclusions. Moreover, we define
    $$
        \Phi_{\mathcal{X}} := \sum\limits_{i \in I} \Phi_{\mathcal{X},Y_i} \colon \CH_1(Y) \longrightarrow \bigoplus\limits_{i \in I} \CH_0(Y_i).
    $$
\end{definition}

\begin{remark}\label{rem:nochangeincomplex}
    The map $\Phi$ in the complex \eqref{eq:complexforanalysis} is the composition of the natural surjection $$
        \bigoplus\limits_{i \in I} \CH_1(Y_i) \longrightarrow \CH_1(Y),
    $$
    given by the pushforward along the inclusions $Y_i \hookrightarrow Y$, with the map $\Phi_{\mathfrak{X}}$, see also \cite[Lemma 3.2]{PS23}. Moreover, the images of $\Phi$ and $\Phi_{\mathfrak X}$ are equal, i.e. it suffices to consider $\Phi_{\mathfrak{X}}$.
\end{remark}

\begin{remark}\label{rem:imageofPSmap}
    With the same notation as in \Cref{def:PSmap}. Let $\gamma_i \in \CH_1(Y_i)$ be a one-cycle for some $i \in I$ and let $\iota_i' \colon Y_i \hookrightarrow Y$ be the natural inclusion, then for every $j \in I$ with $Y_i \cap Y_j \neq \emptyset$
    $$
        \Phi_{\mathcal{X},Y_l}\left(\left(\iota'_i\right)_\ast \gamma_i\right) = \begin{cases}
            - \left(\iota'_{i,j}\right)_\ast \restr{\gamma_i}_{Y_i \cap Y_j} \in \CH_0(Y_i), \quad &\text{if } l = i, \\
            \left(\iota'_{j,i}\right)_\ast \restr{\gamma_i}_{Y_i \cap Y_j} \in \CH_0(Y_j), \quad &\text{if } l = j, \\
            0 \in \CH_0(Y_l),\quad &\text{otherwise},
        \end{cases} 
    $$
    where $\iota'_{i,j} \colon Y_i \cap Y_j \hookrightarrow Y_i$ is the natural inclusion, see \cite[Lemma 3.2]{PS23}.
\end{remark}

Recall that we fixed throughout this section a strictly semi-stable family $\mathfrak X \to \Spec R$ over a dvr $R$ and a resolution $\Tilde{\mathfrak X} \to \Spec \Tilde{R}$ after a finite extension of dvr's $\Tilde{R}/R$ with induced extension $L/k$ of residue fields. We denote the special fibres by $Y$ and $\Tilde{Y}$, respectively. The associated simplicial complexes from \Cref{def:associatedcomplex} are denoted by $\mathcal{C}$ and $\Tilde{\mathcal{C}}$, respectively, and the refinement from \Cref{const:refinement} is denoted by $\psi \colon \Tilde{\mathcal{C}} \to \mathcal{C}$. Recall, for a simplex $\sigma \in \mathcal{C}$, we set
$$
    Y_\sigma := \bigcap\limits_{s \in \sigma(0)} Y_s.
$$

\begin{lemma}\label{lem:phiexplicit}
    The homomorphism $\Phi_{\mathcal X}$ from \Cref{def:PSmap} is given for the strictly semi-stable families $\mathfrak{X} \to \Spec R$ and $\Tilde{\mathfrak{X}} \to \Spec \Tilde{R}$ as follows:
    \begin{enumerate}[label=(\alph*)]
        \item For any $\gamma \in \CH_1(Y)$ of the form \eqref{eq:onecycleY} and for any $v \in \mathcal{C}(0)$,
        $$
            \Phi_{\mathfrak{X},Y_v}(\gamma) = \sum\limits_{w \in A_\mathcal{C}(v)} (\iota_{\{v,w\},v})_\ast \left(\restr{\gamma_w}_{Y_v \cap Y_w} - \restr{\gamma_v}_{Y_v \cap Y_w} \right) \in \CH_0(Y_v),
        $$
        where $\iota_{\{v,w\},v} \colon Y_v \cap Y_w \hookrightarrow Y_v$ is the natural inclusion and $A_{\mathcal{C}}(v)$ is defined in \eqref{eq:adjacentvertices}.
        \item For any $\Tilde{\gamma} \in \CH_1(\Tilde{Y})$ of the form \eqref{eq:onecycleTildeY} and for any $\Tilde{v} \in \Tilde{\mathcal{C}}(0)$,
        \begin{equation}\label{eq:Phiexplicit}
            \begin{aligned}
                \Phi_{\Tilde{\mathfrak{X}},\Tilde{Y}_{\Tilde{v}}}(\Tilde{\gamma}) &= \sum\limits_{v \in A_\psi(\Tilde{v})} - (\iota_{v;\Tilde{v}})_\ast \restr{\Tilde{\gamma}_{\psi(\Tilde{v})}}_{(Y_{v} \cap Y_{\psi(\Tilde{v})})_L} + \sum\limits_{\substack{(v,w) \in \mathcal{C}(0)^2: \\ \Tilde{v} \in \psi^\ast(\{v,w\}) \cap A_{\Tilde{\mathcal{C}}}(\psi^\ast(v))}} (\iota_{v,w;\Tilde{v}})_\ast \restr{\Tilde{\gamma}_v}_{(Y_v \cap Y_w)_L} \\ &\hphantom{=} + \sum\limits_{\tau \in \Wall(\psi)} (\Tilde{\iota}_{\tau,\Tilde{v}})_\ast I_\tau(\Tilde{v}) \Tilde{\alpha}_\tau
            \end{aligned}
        \end{equation}
        in $\CH_0(\Tilde{Y}_{\Tilde{v}})$, where $A_{\Tilde{\mathcal{C}}}(v)$ is defined in \Cref{def:adjacentvertices}, $\psi^\ast$ in \Cref{def:refinement}. $A_\psi(\Tilde{v})$ and $\Wall(\psi)$ in \Cref{def:notationSimplices}, and $I_{\tau}(\Tilde{v})$ in \eqref{eq:Intersectiondef}. Note that $\psi^\ast(v)$ is a unique vertex by \Cref{rem:uniquevertex}. Moreover, $\iota_{v;\Tilde{v}}$, $\iota_{v,w;\Tilde{w}}$, and $\Tilde{\iota}_{\tau,\Tilde{v}}$ are the natural inclusions, see also the remark below.
    \end{enumerate}
\end{lemma}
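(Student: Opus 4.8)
The plan is to exploit that $\Phi_{\mathcal{X},Y_v}=\iota_v^\ast\iota_\ast$ is additive, so that both formulas reduce to evaluating $\iota_v^\ast(\iota_w)_\ast$ on the explicit generators of $\CH_1(Y)$, respectively $\CH_1(\Tilde{Y})$, furnished by \Cref{prop:CH1}. Only two geometric inputs are needed. First, strict semi-stability: every component of a special fibre is a Cartier divisor meeting the other components transversally, while the full special fibre is the principal divisor of a uniformizer, so that for a cycle $\gamma_v$ supported on $Y_v$ one has $\iota_v^\ast(\iota_v)_\ast\gamma_v=c_1(\mathcal{O}_{\mathfrak{X}}(Y_v)|_{Y_v})\cap\gamma_v=-\sum_{w\neq v}c_1(\mathcal{O}_{\mathfrak{X}}(Y_w)|_{Y_v})\cap\gamma_v$, each term of which is, by transversality, the push-forward of the restriction $\gamma_v|_{Y_v\cap Y_w}$ (and vanishes when $Y_v\cap Y_w=\emptyset$). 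Second, the toric intersection formula, \Cref{prop:intersectionformula}, which is exactly the integer recorded by the function $I$ of \Cref{const:intersectionnumber}. With these in hand, part (a) is immediate: writing $\gamma=\sum_w(\iota_w)_\ast\gamma_w$, the summand $w=v$ contributes $-\sum_{w\in A_{\mathcal{C}}(v)}(\iota_{\{v,w\},v})_\ast(\gamma_v|_{Y_v\cap Y_w})$ by the above, the summands $w\in A_{\mathcal{C}}(v)$ contribute $(\iota_{\{v,w\},v})_\ast(\gamma_w|_{Y_v\cap Y_w})$ by transversality, and all other summands vanish; summing gives the stated expression (this is \Cref{rem:imageofPSmap} combined over the components of $\gamma$).

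For part (b) I would run the same analysis on $\Tilde{\mathfrak{X}}$, separately for the two kinds of generators in \eqref{eq:onecycleTildeY}. Consider first a generator $(\Tilde{\iota}_v)_\ast\Tilde{\gamma}_v$ supported on the ``old'' component $\Tilde{Y}_{\psi^\ast(v)}$; by \Cref{prop:simplicialcomplex} this component is a composite of blow-ups of $(Y_v)_L$ in smooth centres lying over the strata $Y_J$ with $v\in J$ and $\#J\geq 3$, and we may, moving $\Tilde{\gamma}_v$ within its rational equivalence class on the smooth variety $(Y_v)_L$, assume that $\Tilde{\gamma}_v$ is the strict transform of a one-cycle disjoint from all those centres. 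The self-intersection part of $\iota_{\Tilde{v}}^\ast\iota_\ast$ is then nonzero only for $\Tilde{v}=\psi^\ast(v)$, where principality of $\Tilde{Y}$ turns it into minus a sum of restrictions of $\Tilde{\gamma}_v$ to the divisors $\Tilde{Y}_{\psi^\ast(v)}\cap\Tilde{Y}_{\Tilde{w}}$, $\Tilde{w}\in A_{\Tilde{\mathcal{C}}}(\psi^\ast(v))$; a divisor of this shape lying over an edge $Y_v\cap Y_w$ equals a section of the corresponding $\proj^1$-bundle and contributes minus the push-forward of $\Tilde{\gamma}_v|_{(Y_v\cap Y_w)_L}$, while one lying over a deeper stratum is an exceptional divisor disjoint from $\Tilde{\gamma}_v$ and contributes $0$. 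This is the first sum, reindexed by $A_\psi(\Tilde{v})=A_{\mathcal{C}}(\psi(\Tilde{v}))$. The remaining ``cross'' part contributes only when $\Tilde{Y}_{\Tilde{v}}$ meets $\Tilde{Y}_{\psi^\ast(v)}$, i.e. $\Tilde{v}\in A_{\Tilde{\mathcal{C}}}(\psi^\ast(v))$, and by the same genericity it survives only when that intersection lies over some edge $Y_v\cap Y_w$, i.e. $\psi(\Tilde{v})\subseteq\{v,w\}$, equivalently $\Tilde{v}\in\psi^\ast(\{v,w\})$; there the contribution is $(\iota_{v,w;\Tilde{v}})_\ast(\Tilde{\gamma}_v|_{(Y_v\cap Y_w)_L})$, giving the middle sum. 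Finally a generator $(\Tilde{\iota}_\tau)_\ast\Tilde{q}_\tau^\ast\Tilde{\alpha}_\tau$ is a sum of fibres of the $\proj^1$-bundle $P_\tau$, hence \'etale-locally a sum of toric curves $V(T')$; its intersection with a component $\Tilde{Y}_{\Tilde{v}}$ is computed fibrewise by \Cref{prop:intersectionformula}, and equals $I_\tau(\Tilde{v})$ times the zero-cycle determined by $\Tilde{\alpha}_\tau$ inside $\Tilde{Y}_{\Tilde{v}}$ — which is precisely how $I$ was defined in \Cref{const:intersectionnumber} — yielding the third sum.

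The step I expect to be the main obstacle is the passage from the \'etale-local toric picture to the global statement in (b): one has to verify that the local intersection numbers of \Cref{prop:intersectionformula} glue to the genuine intersection of the global one-cycle $(\Tilde{\iota}_\tau)_\ast\Tilde{q}_\tau^\ast\Tilde{\alpha}_\tau$ with the global Cartier divisor $\Tilde{Y}_{\Tilde{v}}$, and, relatedly, that the purely combinatorial data $A_{\Tilde{\mathcal{C}}}$, $A_\psi$, $\psi^\ast(\{v,w\})$ and $\Wall(\psi)$ really do record which components of $\Tilde{Y}$ meet a given $\Tilde{Y}_{\psi^\ast(v)}$ and over which stratum of $Y$ that meeting occurs. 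This is where \Cref{prop:simplicialcomplex}, the orbit-cone correspondence, and the explicit blow-up description of \Cref{lem:localmodel} are indispensable. By comparison, the Cartier-divisor and transversality computations, the moving lemma on $(Y_v)_L$, and the resulting vanishing of restrictions to high-codimension strata are routine.
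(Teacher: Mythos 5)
Your proposal follows the paper's strategy very closely: reduce to the explicit generators supplied by \Cref{prop:CH1}, apply the componentwise description of $\Phi_{\mathcal X,Y_l}$ from \Cref{rem:imageofPSmap}, and handle the ``toric part'' of \eqref{eq:onecycleTildeY} via the intersection number $I_\tau(\Tilde v)$ computed from \Cref{prop:intersectionformula}. Where you genuinely add content is in the treatment of the non-toric generators $(\Tilde\iota_v)_\ast\Tilde\gamma_v$ in part (b): the paper dispatches these with the single phrase ``we can use the same argument'' as in part (a), whereas you make explicit the step that is actually needed there --- namely, that $\Tilde\gamma_v\in\CH_1((Y_v)_L)$ is identified with a class on the blow-up $\Tilde Y_{\psi^\ast(v)}$, so one must move $\Tilde\gamma_v$ (by Chow's moving lemma on the smooth projective $(Y_v)_L$) to be disjoint from the codimension $\geq 2$ blow-up centres, after which restrictions to intersections $\Tilde Y_{\psi^\ast(v)}\cap\Tilde Y_{\Tilde w}$ lying over deeper strata vanish and those lying over an edge $Y_v\cap Y_w$ reduce to $\Tilde\gamma_v|_{(Y_v\cap Y_w)_L}$. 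This is exactly what is needed to turn the abstract restriction in \Cref{rem:imageofPSmap} into the concrete terms on the right-hand side of \eqref{eq:Phiexplicit}, and your tracking of which adjacencies $\Tilde v\in A_{\Tilde{\mathcal C}}(\psi^\ast(v))$ survive, and why they are indexed by $\psi^\ast(\{v,w\})$, is correct. Your handling of the toric generators is the same reduction to the \'etale-local fibrewise computation that the paper performs. In short: same route, but you fill in a computation that the paper suppresses, and correctly identify where the geometric input (\Cref{prop:simplicialcomplex}, the orbit-cone correspondence, and the blow-up description of \Cref{lem:localmodel}) is actually used.
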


\begin{remark}
    \begin{enumerate}[label=(\alph*)]
        \item The zero-cycles appearing on the right hand side of \eqref{eq:Phiexplicit} lie in $(Y_{v} \cap Y_{\psi(\Tilde{v})})_L$ for $v \in A_\psi(\Tilde{v})$, $(Y_{v} \cap Y_{w})_L$ if $\Tilde{v} \in \psi^\ast(\{v,w\}) \cap A_{\mathcal{C}_{\Tilde{\mathfrak X}}}(v)$, or $(Y_{\psi(\tau)})_L$ for $\tau \in \Wall(\psi)$. If $I_\tau(\Tilde{v}) \neq 0$, then we know by the construction in \eqref{eq:Intersectiondef} that $\Tilde{v} \in \tau$, i.e. $\Tilde{v}$ is a vertex of the simplex $\tau$. Otherwise we disregard the term, i.e. those $\alpha_\tau$ with $I_\tau(\Tilde{v}) = 0$. 
        
        We know that a finite sequence of blow-ups in smooth centers and (Zariski) locally trivial $\proj^1$-bundles of $(Y_{v} \cap Y_{\psi(\Tilde{v})})_L$, $(Y_{v} \cap Y_{w})_L$, or $(Y_{\psi(\tau)})_L$ (for $I_\tau(\Tilde{v}) \neq 0$) lies in $\Tilde{Y}_{\Tilde{v}}$, see \Cref{prop:simplicialcomplex}. These are the natural inclusions $\iota_{v;\Tilde{v}}$, $\iota_{v,w;\Tilde{w}}$, and $\Tilde{\iota}_{\tau,\Tilde{v}}$, respectively. Note that this makes sense as the Chow group of zero-cycles is invariant under blow-ups in smooth centers and (Zariski) locally trivial $\proj^1$-bundles by \cite[Theorem 3.3 (b) and Proposition 6.7 (e)]{Ful98}.
        \item Two explicit examples of the map $\Phi_{\Tilde{\mathfrak X}}$ are provided in \Cref{appendix}.
    \end{enumerate}
\end{remark}

\begin{proof}
    Item (a) follows immediately from the explicit description of $\Phi_{\mathfrak{X}}$ in \Cref{rem:imageofPSmap} together with \eqref{eq:onecycleY}. For item (b), we can use the same argument, i.e. apply \Cref{rem:imageofPSmap} to the explicit description of the one-cycles in \eqref{eq:onecycleTildeY}. The only non-trivial part is the intersection
    \begin{equation}\label{eq:intersectionwithtau}
        \Tilde{Y}_{\Tilde{v}} \cdot \left(\Tilde{\iota}_\tau\right)_\ast\Tilde{q}^\ast_\tau \Tilde{\alpha}_\tau = \left(\Tilde{\iota}_{\tau;\Tilde{v}}\right)_\ast I_\tau(\Tilde{v}) \alpha_{\tau}.
    \end{equation}
    Since $\alpha_\tau$ is a linear combination of rational equivalent classes of points, it suffices to show \eqref{eq:intersectionwithtau} for every point in $\left(Y_{\psi(\tau)}\right)_L$. For each such point we can consider the \'{e}tale local toric description of $\Tilde{\mathfrak X}$ and find that $I_{\tau}(\Tilde{v})$ is by \eqref{eq:Intersectiondef} the intersection multiplicity at that point. Since this is independent of the point chosen, we obtain \eqref{eq:intersectionwithtau} and thus item (b).
\end{proof}

\begin{proposition}[Key formula]\label{prop:key}
    Let $\mathfrak{X} \to \Spec R$ be a strictly semi-stable $R$-scheme with special fibre $Y$, let $\Tilde{R}/R$ be a finite extension of dvr's of ramification index $r$ with induced extension $L/k$ of residue fields, and let $\Tilde{\mathfrak{X}} \to \Spec \Tilde{R}$ be a resolution of the base-change $\mathfrak{X}_{\Tilde{R}}$ from \Cref{prop:Hartlresolution} and let $\Tilde{Y}$ be its special fibre. Let $q \colon \Tilde{Y} \to Y$ be the natural morphism and let $\psi \colon \mathcal{C}_{\Tilde{\mathfrak X}} \to \mathcal{C}_{\mathfrak X}$ be the refinement of the simplicial complex associated to $\Tilde{\mathfrak X}$ and $\mathfrak X$ as constructed in \Cref{const:refinement}. Then for any $\Tilde{\gamma} \in \CH_1(\Tilde{Y})$ and $v \in \mathcal{C}_{\mathfrak{X}}(0)$ the following holds:
    \begin{equation}\label{eq:key}
        \Phi_{\mathfrak{X},Y_v}(q_\ast \Tilde{\gamma}) = \sum\limits_{\Tilde{v} \in \mathcal{C}_{\Tilde{\mathfrak{X}}}(0)} \left(\iota_{\psi(\Tilde{v}),v}\right)_\ast (r - d(\Tilde{v},v)) q_\ast \Phi_{\Tilde{\mathfrak{X}},\Tilde{Y}_{\Tilde{v}}}(\Tilde{\gamma}) \in \CH_0(Y_v),
    \end{equation}
    where $d(\Tilde{v},v)$ is defined in \eqref{eq:distancedef} and $\iota_{\psi(\Tilde{v}),v} \colon Y_{\psi(\Tilde{v})} \hookrightarrow Y_v$ is the natural inclusion for $d(\Tilde{v},v) < r$.
\end{proposition}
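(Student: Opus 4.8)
The plan is to combine the explicit formulas of \Cref{lem:phiexplicit}, the generating set for $\CH_1(\tilde Y)$ from \Cref{prop:CH1}(b), and the wall relation of \Cref{lem:wallrelation}. Both sides of \eqref{eq:key} are $\Z$-linear in $\tilde\gamma$, so by \Cref{prop:CH1}(b) it suffices to verify the identity when $\tilde\gamma$ is one of the two kinds of generators appearing in \eqref{eq:onecycleTildeY}: a \emph{genuine} cycle $\tilde\gamma=(\tilde\iota_w)_\ast\tilde\gamma_w$ with $\tilde\gamma_w\in\CH_1((Y_w)_L)$, carried by the single component $\tilde Y_{\psi^\ast(w)}$, and a \emph{toric} cycle $\tilde\gamma=(\tilde\iota_\tau)_\ast\tilde q_\tau^\ast\tilde\alpha_\tau$ attached to a single relative wall $\tau\in\Wall(\psi)$. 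Throughout I use that the restriction of $q$ to a component $\tilde Y_{\tilde v}$ factors as a sequence of blow-downs in smooth centers followed by the base-change map $(Y_{\psi(\tilde v)})_L\to Y_{\psi(\tilde v)}$; hence on zero-cycles $q_\ast$ merely transports a cycle along the base-change map, and by flat base change it commutes with restriction to an intersection $Y_u\cap Y_{u'}$. These are the routine compatibilities underlying the identifications in \Cref{lem:phiexplicit} and the remark following it, and I will not belabour them.

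For a toric generator the argument is short. On the left, $q$ contracts every $\proj^1$-fibre of $P_\tau$ to a point, so $q_\ast\tilde\gamma=0$ in $\CH_1(Y)$ and the left-hand side of \eqref{eq:key} vanishes. On the right, \Cref{lem:phiexplicit}(b) leaves only the last summand, $\Phi_{\tilde{\mathfrak X},\tilde Y_{\tilde v}}(\tilde\gamma)=(\tilde\iota_{\tau,\tilde v})_\ast I_\tau(\tilde v)\tilde\alpha_\tau$, which is nonzero only when $\tilde v$ is a vertex of $\tau$; for such $\tilde v$ the simplex $\psi(\tilde v)$ is a face of $\psi(\tau)$, so $q_\ast(\tilde\iota_{\tau,\tilde v})_\ast\tilde\alpha_\tau=(\iota_{\psi(\tau),\psi(\tilde v)})_\ast\bar\alpha$, where $\bar\alpha\in\CH_0(Y_{\psi(\tau)})$ is the base-change pushforward of $\tilde\alpha_\tau$, \emph{independently} of $\tilde v$. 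Composing inclusions and reading the summand as $0$ whenever $d(\tilde v,v)=r$, the right-hand side collapses to $\bigl(\sum_{\tilde v\in\mathcal C_{\tilde{\mathfrak X}}(0)}(r-d(\tilde v,v))\,I_\tau(\tilde v)\bigr)\,(\iota_{\psi(\tau),v})_\ast\bar\alpha$, and the bracketed sum vanishes by \Cref{lem:wallrelation}. So both sides are $0$.

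For a genuine generator I compute both sides against \Cref{lem:phiexplicit}(a) applied to $q_\ast\tilde\gamma=(\iota_w)_\ast\gamma_w$, which is of the form \eqref{eq:onecycleY} with $\gamma_w\in\CH_1(Y_w)$ the pushforward of $\tilde\gamma_w$ under $q$. On the right one checks: the first summand of \eqref{eq:Phiexplicit} contributes only for $\tilde v=\psi^\ast(w)$, the unique lift of $w$ (\Cref{rem:uniquevertex}), where $d(\psi^\ast(w),w)=0$ and $d(\psi^\ast(w),v)=r$ for $v\neq w$; the second summand contributes only for vertices $\tilde v$ that lie in the refinement of some edge $\{w,w'\}$ with $w'\in A_{\mathcal C_{\mathfrak X}}(w)$ and are adjacent to $\psi^\ast(w)$. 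Since Hartl's procedure subdivides a length-$r$ edge into the regular chain on $r+1$ vertices with $\psi^\ast(w)$ and $\psi^\ast(w')$ at the ends (cf.\ \Cref{lem:localmodel}(c) and \Cref{prop:refinement}), along which the $i$-th vertex has distance $i$ to $w$ and $r-i$ to $w'$, this pins the contributing vertex down to the unique neighbour $\tilde v_1'$ of $\psi^\ast(w)$ on that chain, with $d(\tilde v_1',w)=1$ and $d(\tilde v_1',w')=r-1$. Evaluating \eqref{eq:key} at $v=w$: the $\psi^\ast(w)$-term contributes with factor $r-d(\psi^\ast(w),w)=r$ and the minus sign of the first summand, while each edge-neighbour term contributes with factor $r-d(\tilde v_1',w)=r-1$ and the plus sign of the second summand; the two telescope, since $-r+(r-1)=-1$, to $-\sum_{w'\in A_{\mathcal C_{\mathfrak X}}(w)}(\iota_{\{w,w'\},w})_\ast\restr{\gamma_w}_{Y_w\cap Y_{w'}}$, which is exactly $\Phi_{\mathfrak X,Y_w}(q_\ast\tilde\gamma)$ by \Cref{lem:phiexplicit}(a). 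Evaluating at $v=w'\in A_{\mathcal C_{\mathfrak X}}(w)$, only the edge-neighbour term for $\{w,w'\}$ survives, now with factor $r-d(\tilde v_1',w')=1$, giving $(\iota_{\{w,w'\},w'})_\ast\restr{\gamma_w}_{Y_w\cap Y_{w'}}$ as required; at every other $v$ both sides are $0$.

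The step I expect to be the main obstacle is this last piece of bookkeeping for genuine cycles: cataloguing precisely which vertices $\tilde v$ feed into each of the three summands of \eqref{eq:Phiexplicit}, reading off the exact values of the distance function on them — in particular that the unique neighbour of $\psi^\ast(w)$ along a refined edge sits at distance $1$ from $w$, which relies on Hartl's subdivision of a length-$r$ edge being the regular chain — and checking the telescoping $-r+(r-1)=-1$. The other ingredients (linearity; the contraction $q_\ast\tilde\gamma=0$ in the toric case; compatibility of restriction with $q_\ast$ under flat base change; invariance of $\CH_0$ under blow-ups in smooth centers and under $\proj^1$-bundles) are standard, resting on \cite{Ful98} and the toric description already set up in \Cref{sec:CH1resolution} and \Cref{sec:analysis}.
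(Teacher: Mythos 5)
Your proof is correct and follows essentially the same route as the paper: reduce to the two types of generators from Proposition~\ref{prop:CH1}(b), use the explicit formulas of Lemma~\ref{lem:phiexplicit} and the projection formula to track each term's appearance on the right-hand side, and invoke the wall relation (Lemma~\ref{lem:wallrelation}) to kill the toric contributions. The paper phrases the bookkeeping directly for a general $\Tilde\gamma$ of the form~\eqref{eq:onecycleTildeY} rather than first splitting into generators, but the counting (factor $-r$ from $\psi^\ast(v)$, factor $r-1$ from its unique neighbour on each refined edge, factor $1$ from the neighbour of $\psi^\ast(w)$, factor $0$ from $\psi^\ast(w)$ itself) is identical to yours, so the two arguments are the same up to presentation.
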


\begin{proof}
    Let $v \in \mathcal{C}_{\mathfrak{X}}(0)$ be a vertex corresponding to an irreducible component $Y_v$ of the special fibre $Y$ and let $\Tilde{\gamma} \in \CH_1(\Tilde{Y})$ be a one-cycle on $\Tilde{Y}$ which we can assume to be of the form \eqref{eq:onecycleTildeY}. Note first that $d(\Tilde{v},v) < r$ if and only if $v$ is a vertex of $\psi(\Tilde{v})$ by the construction in \eqref{eq:distancedef}. Hence the right hand side of \eqref{eq:key} is well-defined in $\CH_0(Y_v)$. We claim that the equality in \eqref{eq:key} is an equality of cycles. If $r = 1$, then there is nothing to prove, as the base-change and the resolution are trivial, i.e. we can assume without loss of generality $r > 1$. We note that by the projection formula \cite[Proposition 2.3 (c)]{Ful98} for every $w \in A_{\mathcal{C}_{\mathfrak X}}(v)$
    $$
        q_\ast \left(\restr{\Tilde{\gamma}_{v}}_{\left(Y_v \cap Y_{w} \right)_L}\right) = \left(\iota_{\{w,v\},v}\right)_\ast \left(\restr{\left(q_\ast \Tilde{\gamma}_{v} \right)}_{Y_v \cap Y_{w}}\right),
    $$
    where $\iota_{\{w,v\},v} \colon Y_v \cap Y_w \hookrightarrow Y_v$ is the natural inclusion. Similarly, for every $\Tilde{v} \in \psi^\ast(\{v,w\}) \cap A_{\mathcal{C}_{\Tilde{\mathfrak X}}}(v)$ for some $w \in A_{\mathcal{C}_{\mathfrak X}}(v)$ the projection formula yields
    $$
        \left(\iota_{\psi(\Tilde{v}),v}\right)_\ast q_\ast \left(\restr{\Tilde{\gamma}_{v}}_{\left(Y_v \cap Y_{w} \right)_L}\right) = \left(\iota_{\{w,v\},v}\right)_\ast \left( \restr{\left(q_\ast \Tilde{\gamma}_{v} \right)}_{Y_v \cap Y_{w}}\right),
    $$
    where again $\iota_{\{w,v\},v} \colon Y_v \cap Y_w \hookrightarrow Y_v$ is the natural inclusion. Note that the analogous formula holds true after switching $v$ and $w$. We count the appearance of each term on the right hand side of \eqref{eq:key}:

    For any $w \in A_{\mathcal{C}_{\mathfrak X}}(v)$, the term $\left(\iota_{\{w,v\},v}\right)_\ast \left( \restr{\left(q_\ast \Tilde{\gamma}_{v} \right)}_{Y_v \cap Y_{w}}\right)$ appears only in $\Phi_{\Tilde{\mathfrak{X}},\Tilde{Y}_v}(\Tilde{\gamma})$ and $\Phi_{\Tilde{\mathfrak{X}},\Tilde{Y}_{\Tilde{v}}}(\Tilde{\gamma})$ for the unique $\Tilde{v}$ in $\psi^\ast(\{v,w\}) \cap A_{\mathcal{C}_{\Tilde{\mathfrak X}}}(v)$. Hence we get on the right hand side
    $$
        r \left(- \left(\iota_{\{w,v\},v}\right)_\ast \restr{\left(q_\ast \Tilde{\gamma}_{v} \right)}_{Y_v \cap Y_{w}}\right) + (r-1) \left(\iota_{\{w,v\},v}\right)_\ast \restr{\left(q_\ast \Tilde{\gamma}_{v} \right)}_{Y_v \cap Y_{w}} = - \left(\iota_{\{w,v\},v}\right)_\ast \restr{\left(q_\ast \Tilde{\gamma}_{v} \right)}_{Y_v \cap Y_{w}}.
    $$
    Similarly, for any $w \in A_{\mathcal{C}_{\mathfrak X}}(v)$, the term $\left(\iota_{\{w,v\},v}\right)_\ast \left( \restr{\left(q_\ast \Tilde{\gamma}_{w} \right)}_{Y_v \cap Y_{w}}\right)$ appears only in $\Phi_{\Tilde{\mathfrak{X}},\Tilde{Y}_{\Tilde{v}}}(\Tilde{\gamma})$ for the unique $\Tilde{v}$ in $\psi^\ast(\{v,w\}) \cap A_{\mathcal{C}_{\Tilde{\mathfrak X}}}(w)$. Note that the term $\restr{\left(q_\ast \Tilde{\gamma}_{w} \right)}_{Y_v \cap Y_{w}}$ from $\Phi_{\Tilde{\mathfrak{X}},\Tilde{Y}_w}(\Tilde{\gamma})$ does not appear as its coefficient is $r-d(w,v) = r - r = 0$ by \eqref{eq:distancedef}. Thus the term $\left(\iota_{\{w,v\},v}\right)_\ast \left( \restr{\left(q_\ast \Tilde{\gamma}_{w} \right)}_{Y_v \cap Y_{w}}\right)$ appears with coefficient $1$ on the right hand side of \eqref{eq:key}. Lastly, the terms $\left(\iota_{\Tilde{v},v}\right)_\ast q_\ast \Tilde{\alpha}_{\tau}$ vanish on the right hand side by \Cref{lem:wallrelation}. 
    
    Since $\Phi_{\mathfrak{X},Y_v}(q_\ast \Tilde{\gamma})$ is given by
    $$
        \Phi_{\mathfrak{X},Y_v}(q_\ast \Tilde{\gamma}) = \sum\limits_{w \in A_{\mathcal{C}_{\mathfrak X}}(v)} (\iota_{\{v,w\},v})_\ast \left(\restr{\left(q_\ast \Tilde{\gamma}_w\right)}_{Y_v \cap Y_w} - \restr{\left(q_\ast \Tilde{\gamma}_v\right)}_{Y_v \cap Y_w} \right),
    $$
    the above argument shows the equality in \eqref{eq:key}.
\end{proof}

\section{Proof of the main results}\label{sec:mainresult}

\subsection{Exactness of the complex}

\begin{theorem}\label{thm:mainresult1}
    Let $R$ be a dvr with algebraically closed residue field $k$. Let $\mathfrak{X} \to \Spec R$ be a strictly semi-stable $R$-scheme with special fibre $Y$, let $\Tilde{R}/R$ be a finite extension of dvr's with ramification index $r$, and let $\Tilde{\mathfrak{X}} \to \Spec \Tilde{R}$ be a resolution of $\mathfrak{X}_{\Tilde{R}}$ from \Cref{prop:Hartlresolution}. Let $\Lambda = \Z/m $ for some $m \in \N_0$ and let $\psi \colon \mathcal{C}_{\Tilde{\mathfrak{X}}} \to \mathcal{C}_{\mathfrak{X}}$ be the refinement of simplicial complexes associated to the strictly semi-stable families $\mathfrak{X} \to \Spec R$ and $\Tilde{\mathfrak{X}} \to \Spec \Tilde{R}$ from \Cref{const:refinement}. If the complex
    \begin{equation}\label{eq:complexbasechange}
        \CH_1(\Tilde{Y}) \otimes_\Z \Lambda \xrightarrow[]{\Phi_{\Tilde{\mathfrak{X}}} \otimes \Lambda} \bigoplus\limits_{\Tilde{v} \in \mathcal{C}_{\Tilde{\mathfrak X}}(0)} \CH_0(\Tilde{Y}_{\Tilde{v}}) \otimes_\Z \Lambda \xrightarrow{\sum_{\Tilde{v}}\deg} \Lambda
    \end{equation}
    is exact, then the complex
    \begin{equation}\label{eq:complexbefore}
        \CH_1(Y) \otimes_\Z \Lambda \xrightarrow[]{\Phi_{\mathfrak{X}} \otimes \Lambda} \bigoplus\limits_{v \in \mathcal{C}_{\mathfrak X}(0)} \CH_0(Y_v) \otimes_\Z \Lambda \xrightarrow{\sum_v \deg} \Lambda
    \end{equation}
    is exact, where $\Phi_{\Tilde{\mathfrak{X}}}$ and $\Phi_{\mathfrak{X}}$ are defined in \Cref{def:PSmap}.
\end{theorem}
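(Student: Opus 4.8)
The statement ``the complex \eqref{eq:complexbefore} is exact'' means exactness at the middle term $\bigoplus_{v}\CH_0(Y_v)\otimes_\Z\Lambda$ together with surjectivity of $\sum_v\deg$. The latter is immediate: $Y$ is non-empty and $k$ is algebraically closed, so some component $Y_v$ carries a closed point, i.e.\ a zero-cycle of degree $1$, and the identical remark handles the rightmost map of \eqref{eq:complexbasechange}. That both sequences are complexes ($\sum\deg\circ\Phi=0$) is recorded in \Cref{rem:imageofPSmap}. So, after discarding the trivial case $r=1$, the content is the inclusion $\ker(\sum_v\deg\otimes\Lambda)\subseteq\mathrm{im}(\Phi_{\mathfrak X}\otimes\Lambda)$, the reverse inclusion being the complex property.

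The bridge between the two complexes is \Cref{prop:key}. Let $\Psi\colon\bigoplus_{\Tilde v}\CH_0(\Tilde Y_{\Tilde v})\to\bigoplus_{v}\CH_0(Y_v)$ be the homomorphism sending $(\xi_{\Tilde v})_{\Tilde v}$ to $\big(\sum_{\Tilde v}(\iota_{\psi(\Tilde v),v})_\ast(r-d(\Tilde v,v))\,q_\ast\xi_{\Tilde v}\big)_v$; then \Cref{prop:key} says precisely that $\Phi_{\mathfrak X}\circ q_\ast=\Psi\circ\Phi_{\Tilde{\mathfrak X}}$ on $\CH_1(\Tilde Y)$, where \Cref{lem:wallrelation} is what makes the toric part of a one-cycle on $\Tilde Y$ disappear after applying $\Psi$. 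Since $\CH_1(Y)$ is generated by one-cycles supported on the components $Y_v$ (\Cref{prop:CH1} (a)) and each $Y_v$ is the base of the tower of smooth blow-ups $\Tilde Y_{\psi^\ast(v)}\to Y_v$ (\Cref{prop:simplicialcomplex}), the pushforward $q_\ast\colon\CH_1(\Tilde Y)\to\CH_1(Y)$ is surjective; hence $\mathrm{im}\,\Phi_{\mathfrak X}=\Psi(\mathrm{im}\,\Phi_{\Tilde{\mathfrak X}})$. Tensoring with $\Lambda$ and invoking the hypothesis that \eqref{eq:complexbasechange} is exact, we get
\[
    \mathrm{im}(\Phi_{\mathfrak X}\otimes\Lambda)=(\Psi\otimes\Lambda)\big(\mathrm{im}(\Phi_{\Tilde{\mathfrak X}}\otimes\Lambda)\big)=(\Psi\otimes\Lambda)\big(\ker(\sum_{\Tilde v}\deg\otimes\Lambda)\big).
\]
Thus the theorem reduces to the assertion that every $\bar\beta\in\ker(\sum_v\deg\otimes\Lambda)$ is of the form $\Psi(\bar{\Tilde\beta})$ for some $\bar{\Tilde\beta}\in\bigoplus_{\Tilde v}\CH_0(\Tilde Y_{\Tilde v})\otimes_\Z\Lambda$ of total degree $0$.

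Constructing such a $\bar{\Tilde\beta}$ is the technical heart, and is where \Cref{const:distance}, \Cref{const:intersectionnumber} and \Cref{lem:wallrelation} genuinely enter. First lift $\bar\beta$ to $\beta=(\beta_v)_v$ over $\Z$ with $\sum_v\deg\beta_v=0$; this is possible because $\sum_v\deg\beta_v\equiv 0\bmod m$ for any lift, so one may correct it by subtracting a suitable multiple of $m\cdot[\mathrm{pt}]$ from one slot. Using the invariance of $\CH_0$ under smooth blow-ups and $\proj^1$-bundles (\cite[Theorem 3.3 (b) and Proposition 6.7 (e)]{Ful98}) we identify $\CH_0(\Tilde Y_{\psi^\ast(v)})\cong\CH_0(Y_v)$ and lift $\beta$ componentwise onto the vertices $\psi^\ast(v)$; since $d(\psi^\ast(v),v)=0$, the value of $\Psi$ on this naive lift is $r\beta$ rather than $\beta$. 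The defect factor $r$ is corrected with the help of the new vertices: over each $1$-simplex $\{v,w\}$ of $\mathcal{C}_{\mathfrak X}$, Hartl's resolution of the associated toric singularity (\Cref{prop:refinement}, via the blow-ups of \Cref{lem:localmodel} (c)) is the Hirzebruch--Jung chain, so its new vertices realise all the weight pairs $(r-j,j)$, $j=1,\dots,r-1$, of $\big(r-d(\cdot,v),\,r-d(\cdot,w)\big)$; combining the rung $j=1$ with the weight-$r$ contribution of $\psi^\ast(v)$ one checks that for every zero-cycle $\delta$ on $Y_v\cap Y_w$ the ``edge move'' $(\iota_{\{v,w\},w})_\ast\delta-(\iota_{\{v,w\},v})_\ast\delta$ lies in $\Psi\big(\ker(\sum_{\Tilde v}\deg)\big)$. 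Finally, the hypothesis itself — applied to the class of $\ker(\sum_{\Tilde v}\deg\otimes\Lambda)$ concentrated in the single slot $\psi^\ast(v)$ and then pushed down along $q$ — forces every degree-$0$ cycle on a component $Y_v$ to be represented, after tensoring with $\Lambda$, by a cycle supported on the boundary strata $\bigcup_{w}(Y_v\cap Y_w)$; feeding this into the edge moves, together with the weight-$r$ contributions and the connectedness of $\mathcal{C}_{\mathfrak X}$, one assembles the required $\bar{\Tilde\beta}$ while keeping the total degree zero. I expect this last combinatorial assembly — the careful bookkeeping of how boundary-supported cycles and the edge moves propagate through the dual complex — to be the main obstacle; everything preceding it is formal manipulation of the key formula together with standard facts about Chow groups.
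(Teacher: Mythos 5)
Your setup and reduction are sound and match the paper's underlying structure: defining $\Psi$ from the key formula, noting $\Phi_{\mathfrak X}\circ q_\ast = \Psi\circ\Phi_{\Tilde{\mathfrak X}}$, observing $q_\ast\colon\CH_1(\Tilde Y)\to\CH_1(Y)$ is surjective (properness plus birationality of each $\Tilde Y_{\psi^\ast(v)}\to Y_v$), and reducing to showing $\ker(\sum_v\deg\otimes\Lambda)\subseteq\Psi\bigl(\ker(\sum_{\Tilde v}\deg\otimes\Lambda)\bigr)$. Your observation that single edge moves are realizable — supporting $\delta$ on $\psi^\ast(v)$ and $-\delta$ on the first rung of the chain over $\{v,w\}$ produces a total-degree-zero $\bar{\Tilde\beta}$ with $\Psi(\bar{\Tilde\beta})$ equal to the edge move — is correct and is, in effect, what the paper also verifies via its second $\Tilde\gamma''$.

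The gap is in the assembly you flag as ``the main obstacle,'' and your proposed route to close it would not work as described. After the naive lift on the $\psi^\ast(v)$'s the defect is $(1-r)\bar\beta$, an \emph{arbitrary} total-degree-zero collection, not a collection supported on boundary strata; edge moves alone can only produce the latter, so ``correcting the factor $r$ by edge moves'' is exactly the surjectivity statement being proved, not a simplification of it. The paper avoids this circularity by applying the exactness hypothesis for $\Tilde{\mathfrak X}$ \emph{first}, obtaining $\Tilde\gamma$ with $\Phi_{\Tilde{\mathfrak X},\Tilde Y_{\Tilde v}}(\Tilde\gamma)=\beta_{\psi(\Tilde v)}$ (or $0$), and only then considering $\beta'=\beta-\Phi_{\mathfrak X}(q_\ast\Tilde\gamma)$. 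The crucial move is to replace $\beta_v$ by $q_\ast\Phi_{\Tilde{\mathfrak X},\Tilde Y_{\psi^\ast(v)}}(\Tilde\gamma)$, which is the \emph{same Chow class} but, via \Cref{rem:imageofPSmap} and \Cref{lem:phiexplicit}, has an explicit cycle representative supported on the boundary strata of $Y_v$. Then the cycle-level version of \Cref{prop:key} shows $\beta'$ is an explicit boundary-supported cycle; by adding the explicit cycle representatives of $q_\ast\Phi_{\Tilde{\mathfrak X},\Tilde Y_{\Tilde v}}(\Tilde\gamma)=0$ for the new vertices $\Tilde v\notin\psi^\ast\mathcal{C}_{\mathfrak X}(0)$, this regroups into edge moves $\beta''$, each of which is treated by a targeted second application of the exactness hypothesis (choosing $\Tilde\gamma''$ with support only on $\psi^\ast(\{v_1,v_2\})$). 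Your version also does not explain how to ``push'' an arbitrary degree-$0$ class on $Y_v$ into boundary-supported form \emph{compatibly with} the signs needed to form edge moves across the whole complex; the paper sidesteps this by letting the decomposition \eqref{eq:onecycleTildeY} of the single $\Tilde\gamma$ dictate all the boundary cycles simultaneously. So the proposal correctly identifies the skeleton but leaves the load-bearing step, which is precisely what the paper's proof supplies.
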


\begin{proof}
    Let $\beta = (\beta_v)_v \in \bigoplus\limits_{v \in \mathcal{C}_{\mathfrak{X}}} \CH_0(Y_v) \otimes \Lambda$ be of total degree $\sum\limits_{v} \deg (\beta_v) = 0$. Since \eqref{eq:complexbasechange} is exact, there exists a one-cycle $\Tilde{\gamma} \in \CH_1(\Tilde{Y}) \otimes \Lambda$ of the form \eqref{eq:onecycleTildeY} such that
    \begin{equation}\label{eq:firstsurjectivity}
        \left(\Phi_{\Tilde{\mathfrak{X}},\Tilde{Y}_{\Tilde{v}}} \otimes \Lambda\right) (\Tilde{\gamma}) = \begin{cases}
            \beta_{\psi(\Tilde{v})} & \text{if } \psi(\Tilde{v}) \in \mathcal{C}_{\mathfrak{X}}(0), \\
            0 & \text{otherwise.}
        \end{cases}
    \end{equation}
    Let $q \colon \Tilde{\mathfrak{X}} \to \mathfrak{X}$ be the natural morphism and let $q \colon \Tilde{Y} \to Y$ also denote the restriction to the special fibre. Then, we consider the element
    \begin{equation}\label{eq:beta'}
        \begin{aligned}
            \beta' := \beta - \left(\Phi_{\mathfrak{X}} \otimes \Lambda\right)(q_\ast \Tilde{\gamma}) &= \left(\beta_v - \left(\Phi_{\mathfrak{X},Y_v} \otimes \Lambda\right) (q_\ast \Tilde{\gamma})\right)_v \\
            &= \left(q_\ast \left(\Phi_{\Tilde{\mathfrak{X}},\Tilde{Y}_{\psi^\ast(v)}} \otimes \Lambda\right) (\Tilde{\gamma}) - \left(\Phi_{\mathfrak{X},Y_v} \otimes \Lambda\right) (q_\ast \Tilde{\gamma})\right)_v \\
        \end{aligned}
    \end{equation}
    in $\bigoplus\limits_v \CH_0(Y_v) \otimes \Lambda$, where $\psi^\ast(v) \in \mathcal{C}_{\Tilde{\mathfrak X}}(0)$ is the unique vertex mapping via $\psi$ to $v$, see \Cref{rem:uniquevertex}. Note that for $\Tilde{v} \in \mathcal{C}_{\Tilde{\mathfrak{X}}}(0) \setminus \psi^\ast\left(\mathcal{C}_{\mathfrak{X}}(0)\right)$, we have
    $$
        q_\ast \left(\Phi_{\Tilde{\mathfrak{X}},\Tilde{Y}_{\Tilde{v}}} \otimes \Lambda \right)(\Tilde{\gamma}) = q_\ast 0 = 0
    $$
    by \eqref{eq:firstsurjectivity}. By adding these terms to some choosen component (depending on $\Tilde{v}$), we find by \Cref{rem:imageofPSmap} that $\beta'$ can be written as a sum of elements $$
        \beta'' := (\beta''_v)_v \in \bigoplus\limits_{v \in \mathcal{C}_{\mathfrak{X}}(0)} \CH_0(Y_v) \otimes \Lambda
    $$
    of the form
    $$
        \beta''_v = \begin{cases}
            (\iota_{\{v_1,v_2\},v_1})_\ast \alpha & \text{if } v = v_1, \\
            - (\iota_{\{v_1,v_2\},v_2})_\ast \alpha & \text{if } v = v_2, \\
            0 & \text{otherwise,}
        \end{cases}
    $$
    for some $v_1,v_2 \in \mathcal{C}_{\mathfrak{X}}(0)$, $\alpha \in \CH_0(Y_{v_1} \cap Y_{v_2}) \otimes \Lambda$ and $\iota_{\{v,w\},w} \colon Y_{v} \cap Y_w \hookrightarrow Y_w$ the natural inclusion. We claim that $\beta'' \in \Ima \Phi_{\mathfrak X} \otimes \Lambda$. Indeed, consider $\Tilde{\gamma}'' \in \CH_1(\Tilde{Y}) \otimes \Lambda$ with
    $$
        \left(\Phi_{\Tilde{\mathfrak{X}},\Tilde{Y}_{\Tilde{v}}} \otimes \Lambda\right)(\Tilde{\gamma}'') = \begin{cases}
            \left(\Tilde{\iota}_{\{v_1,v_2\},v_1}\right)_\ast \alpha & \text{if } \Tilde{v} = v_1 \\
            - \alpha & \text{if } \Tilde{v} \in \psi^\ast(\{v_1,v_2\}) \cap A_{\mathcal{C}_{\Tilde{\mathfrak X}}}(v_1) \\
            0 & \text{otherwise,}
        \end{cases}
    $$
    which exists by the exactness of \eqref{eq:complexbasechange}. Then \Cref{prop:key} yields
    $$
        \begin{aligned}
            \left(\Phi_{\mathfrak X,Y_{v_1}}\otimes\Lambda\right)(q_\ast \Tilde{\gamma}'') &= r q_\ast \left(\Tilde{\iota}_{\{v_1,v_2\},v_1}\right)_\ast \alpha + (\iota_{\{v_1,v_2\},v_1})_\ast (r-1) q_\ast (-\alpha) + 0  = \left(\iota_{\{v_1,v_2\},v_1}\right)_\ast \alpha,\\
            \left(\Phi_{\mathfrak X,Y_{v_2}}\otimes\Lambda\right)(q_\ast \Tilde{\gamma}'') &= 0 + (\iota_{\{v_1,v_2\},v_1})_\ast \left(1 \cdot q_\ast (-\alpha)\right) = - (\iota_{\{v_1,v_2\},v_2})_\ast \alpha, \\
            \left(\Phi_{\mathfrak X,Y_{v}}\otimes\Lambda\right)(q_\ast \Tilde{\gamma}'') &= 0,
        \end{aligned}
    $$
    for $v \in \mathcal{C}_{\mathfrak X} \setminus \{v_1,v_2\}$. This shows the claim and thus $\beta \in \Ima \left(\Phi_{\mathfrak X} \otimes \Lambda \right)$.
\end{proof}

\begin{remark}
    Note that the assumption on the residue field being algebraically closed is crucial for this argument. Because otherwise, we need to consider $\left(\beta_{\psi(\Tilde{v})}\right)_L$ in \eqref{eq:firstsurjectivity}. But then $q_\ast \left(\beta_{\psi(\Tilde{v})}\right)_L = [L:k] \cdot \beta_{\psi(\Tilde{v})}$, i.e. we can only obtain a multiple of $\beta$ with this argument.
\end{remark}

\begin{remark}\label{rem:slightgeneralization}
    With the same notation as in \Cref{setup}, assume additionally that $k$ is algebraically closed, in particular $L = k$. Let $A/R$ be an unramified extension of dvr's with induced extension $L'/k$ of residue fields and let $\Tilde{A}/\Tilde{R}$ be an unramified extension of dvr's with induced extension $L'/k$ of residue fields which exists by \cite[Th{\'e}or{\`e}me 1 and Corollaire on page AC IX.40-41]{Bou06}. Let $\mathfrak{X}_A \to \Spec A$ and $\Tilde{\mathfrak{X}}_{\Tilde{A}} \to \Tilde{A}$ be the base-change of $\mathfrak{X}$ and $\Tilde{\mathfrak{X}}$, respectively. Since the extensions of dvr's are unramified, the base-changes of strictly semi-stable families are strictly semi-stable, see \Cref{sec:strictlysemistable}. Moreover, the natural morphism $q \colon \Tilde{Y} \to Y$ given by restricting $q \colon \Tilde{\mathfrak{X}} \to \mathfrak{X}$ to the special fibre induces a morphism of the base-change $q \colon \Tilde{Y}_{L'} \to Y_{L'}$. Then the same argument as in the proof above shows: If the complex \eqref{eq:complexbasechange} is exact after base change to $L'$, then the complex \eqref{eq:complexbefore} is exact after base change to $L'$.
\end{remark}

Using the same argument as in \cite[Theorem 4.1]{PS23} and the above theorem, we obtain the following result.

\begin{cor}\label{cor:mainresult2}
    Let $R$ be a dvr with algebraically closed residue field $k$, let $\mathfrak{X} \to \Spec R$ be a strictly semi-stable $R$-scheme with special fibre $Y = \bigcup\limits_{i \in I} Y_i$ such that the geometric generic fibre $\overline{X}$ of $\mathfrak{X}$ admits a decomposition of the diagonal. Then for any unramified extension $A/R$ of dvr's with induced extension of residue fields $L'/k$, the complex
    \begin{equation}\label{eq:complexmainthm}
        \bigoplus\limits_{i \in I} \CH_1\left(\left(Y_{i}\right)_{L'}\right) \xrightarrow{\Phi_{A}} \bigoplus\limits_{i \in I} \CH_0\left(\left(Y_{i}\right)_{L'}\right) \xrightarrow{\deg} \Z
    \end{equation}
    is exact, where $\Phi_{A} = \Phi_{\mathfrak{X}_A} \circ \sum_i \left(\iota_{i,L'}\right)_\ast$. Here $\Phi_{\mathfrak{X}_A}$ is the homomorphism from \Cref{def:PSmap} applied to the strictly semi-stable family $\mathfrak{X}_A \to \Spec A$ and $\iota_{i,L'} \colon \left(Y_{i}\right)_{L'} \hookrightarrow Y_{L'}$ is the natural inclusion for $i \in I$.
\end{cor}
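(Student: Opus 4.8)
The plan is to follow the two‑step pattern of \cite[Theorem 4.1]{PS23}: first pass to a finite extension of $R$ over which the generic fibre itself acquires a decomposition of the diagonal and apply \cite[Theorem 1.2 (1)]{PS23} there, and then descend using the base‑changed form of \Cref{thm:mainresult1} recorded in \Cref{rem:slightgeneralization}. Write $K = \operatorname{Frac}(R)$ and let $X$ be the generic fibre of $\mathfrak{X} \to \Spec R$, so $\overline{X} = X \times_K \overline{K}$.

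\textbf{Step 1 (reduction to a finite extension).} A decomposition of the diagonal on $\overline{X}$ is witnessed by finitely many cycles together with finitely many rational equivalences, all of which are defined over a finite subextension $K'/K$ of $\overline{K}/K$ by the usual continuity (spreading‑out) of Chow groups; hence $X_{K'}$ already admits a decomposition of the diagonal. Choose a finite extension of dvr's $R \subset \Tilde{R}$ with $\operatorname{Frac}(\Tilde{R}) \supseteq K'$ — for instance the localisation at a maximal ideal of the integral closure of $R$ in $K'$ — of ramification index $r$ and residue field $L$; since $k$ is algebraically closed and $L/k$ is finite, $L = k$. As $k$ is algebraically closed the irreducible components of $Y$ are geometrically integral, so \Cref{prop:Hartlresolution} yields a resolution $\Tilde{\mathfrak{X}} \to \Spec \Tilde{R}$ which is strictly semi-stable, with natural morphism $q \colon \Tilde{\mathfrak{X}} \to \mathfrak{X}$. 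Since the centres of the blow-ups in \Cref{prop:Hartlresolution} lie in special fibres, the generic fibre of $\Tilde{\mathfrak{X}} \to \Spec \Tilde{R}$ is $X_{\operatorname{Frac}(\Tilde{R})}$, which still admits a decomposition of the diagonal (this property is stable under base change of the ground field). We are now precisely in the situation of \Cref{setup} with $k$ algebraically closed, so \Cref{rem:slightgeneralization} applies to $\mathfrak{X}$, $\Tilde{\mathfrak{X}}$, and $q$.

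\textbf{Step 2 (exactness after the finite base change).} Let $A/R$ be the given unramified extension with residue field $L'/k$ and choose, as in \Cref{rem:slightgeneralization}, an unramified extension $\Tilde{A}/\Tilde{R}$ with the same residue field $L'$. Then $\Tilde{\mathfrak{X}}_{\Tilde{A}} \to \Spec \Tilde{A}$ is strictly semi-stable, being an unramified base change of $\Tilde{\mathfrak{X}} \to \Spec \Tilde{R}$, its special fibre $\Tilde{Y}_{L'}$ has geometrically integral components, and its generic fibre $X_{\operatorname{Frac}(\Tilde{R})} \times_{\operatorname{Frac}(\Tilde{R})} \operatorname{Frac}(\Tilde{A})$ still admits a decomposition of the diagonal. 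Hence \cite[Theorem 1.2 (1)]{PS23} applies to $\Tilde{\mathfrak{X}}_{\Tilde{A}} \to \Spec \Tilde{A}$ and gives the exactness of the associated complex of the shape \eqref{eq:complex}; by \cite[Lemma 3.2]{PS23} (see \Cref{rem:nochangeincomplex}) this is the same as the exactness of the complex \eqref{eq:complexbasechange} for $\Tilde{\mathfrak{X}}$ with $\Lambda = \Z$ after base change to $L'$.

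\textbf{Step 3 (descent and cleanup).} By \Cref{rem:slightgeneralization} the exactness just obtained forces the exactness of the complex \eqref{eq:complexbefore} for $\mathfrak{X}$ with $\Lambda = \Z$ after base change to $L'$, i.e. of
$$
    \CH_1(Y_{L'}) \xrightarrow{\Phi_{\mathfrak{X}_A}} \bigoplus\limits_{i \in I} \CH_0((Y_i)_{L'}) \xrightarrow{\sum_i \deg} \Z .
$$
Finally, since the pushforward $\bigoplus_{i \in I} \CH_1((Y_i)_{L'}) \to \CH_1(Y_{L'})$ is surjective and $\Phi_A = \Phi_{\mathfrak{X}_A} \circ \sum_i (\iota_{i,L'})_\ast$ has the same image as $\Phi_{\mathfrak{X}_A}$ (see \Cref{rem:nochangeincomplex}), the complex \eqref{eq:complexmainthm} is exact, which is the assertion. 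Essentially all of the substance sits in \Cref{thm:mainresult1}/\Cref{rem:slightgeneralization}; the only genuinely non-formal input here is the continuity argument of Step 1 producing $K'/K$, while the remaining care is bookkeeping — verifying that strict semi-stability, geometric integrality of the components, and the decomposition of the diagonal on the generic fibre all survive the ramified base change $R \subset \Tilde{R}$ and then the unramified base change $\Tilde{R} \subset \Tilde{A}$, and matching the various forms of the complex ($\bigoplus_i \CH_1$ versus $\CH_1(Y)$, and ``base change to $L'$'' versus ``the complex of the family over $\Tilde{A}$'').
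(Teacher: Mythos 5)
Your proof follows the same two-step strategy as the paper (pass to a finite extension where the decomposition of the diagonal exists, apply \cite[Theorem 1.2 (1)]{PS23} to a resolution, then descend via \Cref{thm:mainresult1}/\Cref{rem:slightgeneralization}), and the descent/bookkeeping steps 2--3 are fine. However, there is a gap in Step~1 that the paper explicitly guards against: you claim to obtain a \emph{finite} extension of dvr's $R \subset \Tilde{R}$ by ``the localisation at a maximal ideal of the integral closure of $R$ in $K'$''. That localisation is a dvr, but it is not in general a finite $R$-module --- when the prime of $R$ splits in the integral closure, localising inverts elements of the other maximal ideals and destroys module-finiteness. (And if $R$ is not assumed excellent/Nagata, the integral closure itself need not be finite over $R$.) Since \Cref{prop:Hartlresolution} is stated for a \emph{finite} ramified extension of dvr's, your $\Tilde{R}$ does not meet its hypotheses as written, and the auxiliary claim ``$L/k$ finite, hence $L = k$'' also leans on that finiteness.

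The paper resolves this by first observing that the complex \eqref{eq:complexbefore} depends only on the special fibre, hence one may replace $R$ by its completion $\widehat{R}$ without loss of generality. Over a complete dvr the integral closure in a finite extension of the fraction field is automatically a complete dvr, finite over $\widehat{R}$, with a unique maximal ideal and residue field $k$, so no localisation is needed and \Cref{prop:Hartlresolution} applies directly. Adding this initial reduction to your Step~1 would make the argument match the paper's and close the gap; everything else you wrote is in order.
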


\begin{proof}
    Since the homomorphism $\Phi_{\mathfrak{X}}$ depends only on the special fibre (see \cite[Lemma 3.2]{PS23}) and the special fibre does not change under the base-change to the completion of $R$, we can assume without loss of generality that $R$ is complete. The decomposition of the diagonal for the geometric generic fibre holds after some finite field extension $F/K$ of the fraction field of $R$. 
    
    Consider the integral closure $\Tilde{R}$ of $R$ in $F$. Since $F$ is a finite field extension and $R$ is complete, the ring $\Tilde{R}$ is also a dvr and a finite extension over $R$. Note that the residue field of $\Tilde{R}$ is again $k$ because $k$ is algebraically closed. Moreover, fix a resolution $\Tilde{\mathfrak{X}} \to \Spec \Tilde{R}$ of the base-change $\mathfrak{X}_{\Tilde{R}}$ from \Cref{prop:Hartlresolution}.

    Let $A/R$ be a unramified extension of dvr's with induced extension $L'/k$ of residue fields. Then, by \cite[Th{\'e}or{\`e}me 1 and Corollaire on page AC IX.40-41]{Bou06} there exists an unramified extension $\Tilde{A}/\Tilde{R}$ whose extension of residue fields is $L'/k$. Since the generic fibre $\Tilde{X} = X_F$ of $\Tilde{\mathfrak{X}}$ admits a decomposition of the diagonal, the homomorphism $\Phi_{\Tilde{\mathfrak{X}}_{\Tilde{A}}}$ is surjective onto the kernel of the degree map by \cite[Theorem 1.2 (1)]{PS23}. Hence, the complex \eqref{eq:complexmainthm} is exact by \Cref{thm:mainresult1}, \Cref{rem:slightgeneralization}, and \Cref{rem:nochangeincomplex}.
\end{proof}

\begin{proof}[Proof of \Cref{thm:main}]
    Let $R$ be a dvr with fraction field $k$ and let $L/k$ be a field extension. By \cite[Corollaire on page AC IX.41]{Bou06}, there exists a unramified extension $A/R$ whose induced extension of residue fields is $L/k$. Thus \Cref{thm:main} is a reformulation of \Cref{cor:mainresult2}, because the degree map $\sum_i \deg$ is clearly surjective as $k = \overline{k}$.
\end{proof}

The assumption on the residue field in \Cref{thm:main} and \Cref{cor:mainresult2} is crucial.

\begin{remark}\label{rem:counterexample}
    \Cref{thm:main} and \Cref{cor:mainresult2} show the exactness of the complex \eqref{eq:complex} for strictly semi-stable $R$-scheme if the residue field $k$ of $R$ is algebraically closed and the geometric generic fibre admits a decomposition of the diagonal. 
    
    The exactness of the complex \eqref{eq:complex} at $\Z$ requires the existence of a zero-cycle of degree $1$ on the special fibre. Such a zero-cycle does not exist in general over non-closed fields. The exactness of the complex \eqref{eq:complex} fails over non-closed fields in general also at $\bigoplus_i \CH_0(Y_i)$. Indeed, consider a smooth, projective geometrically rational variety $Y$ over a field $k \neq \overline{k}$ such that $\Azero(Y) := \ker \left(\deg \colon \CH_0(Y) \to \Z\right)$ is non-trivial. Then the smooth, projective family $\mathfrak{X} = Y \times_{k} k[[t]] \to \Spec k[[t]]$ is strictly semi-stable and the image of $\Phi$ is trivial. Hence \eqref{eq:complex} is not exact, but the geometric generic fibre is rational and thus admits a decomposition of diagonal. As an example for $Y$, we can consider the diagonal cubic surface $S_1 := \{x^3 + y^3 + z^3 + pw^3 = 0\} \subset \proj^3_{\Q_p}$ over $\Q_p$ which satisfies $\Azero(S_1) \neq 0$ by \cite[Example 2.8]{CTS96} and \cite[Theorem 4.1.1]{SS14}. By \cite[Proposition 1.3]{CT18} there is a cubic surface $S_2$ over $\R$ with $\Azero(S_2) \neq 0$, which yields another example for $Y$.
    
    We also provide a counterexample over number fields. Consider a (diagonal) cubic surface $S$ over a number field whose Brauer group $\Brauer(S)$ is non-trivial, see e.g. \cite[Proposition 1]{CTKS87} and \cite{SD93}. This implies by \cite[Theorem 2.11]{Mer08} that there exists a field extension $L/k$ such that $\Azero(S_L) \neq 0$. Hence, the complex \eqref{eq:complex} is not exact after base-change to the field extension $L/k$.
\end{remark}

\subsection{Geometrically rational varieties over Laurent fields}

We prove \Cref{cor:main} more generally for fraction fields of excellent, henselian dvr's with algebraically closed residue field. Recall that the exponential characteristic of a field $k$ is $p$ if the characteristic $\charac k = p > 0$ and $1$ if $\charac k = 0$.

\begin{cor}\label{cor:applicationCH0}
    Let $R$ be an excellent, henselian dvr with fraction field $K$ and algebraically closed residue field $k$. Let $X$ be a smooth, projective variety over $K$ and assume that $X$ admits a strictly semi-stable projective model over $R$. If $X$ is geometrically (retract) rational, then the degree map $\deg \colon \CH_0(X) \to \Z$ is an isomorphism up to inverting the exponential characteristic of $k$.
\end{cor}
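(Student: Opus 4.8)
The plan is to deduce \Cref{cor:applicationCH0} from \Cref{thm:main} (equivalently \Cref{cor:mainresult2}) via Fulton's localization sequence, combined with the Saito--Sato result on the étale cycle class map, exactly as sketched in the introduction. Let $\mathfrak{X} \to \Spec R$ be the given strictly semi-stable projective model, with special fibre $Y = \bigcup_{i \in I} Y_i$ and generic fibre $X$. First I would recall the localization exact sequence
$$
    \CH_1(Y) \longrightarrow \CH_1(\mathfrak{X}) \longrightarrow \CH_0(X) \longrightarrow 0,
$$
where the first map is proper pushforward along $Y \hookrightarrow \mathfrak{X}$ and the second is restriction to the open generic fibre. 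Since $R$ is excellent and henselian, the hypotheses of \Cref{cor:mainresult2} are met after possibly passing to the completion (which does not change the special fibre, hence does not change $\Phi_{\mathfrak{X}}$, and geometric retract rationality of $X$ survives); geometric retract rationality gives a decomposition of the diagonal of the geometric generic fibre by \cite[Lemma 7.5]{Sch21}.

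The heart of the argument is to identify $\CH_0(X)$ with $\mathrm{coker}(\CH_1(Y) \to \CH_1(\mathfrak{X}))$ and then to compute this cokernel up to inverting the exponential characteristic $p$ of $k$. The key point is that $\CH_0(X)$ is generated by the classes of the components $Y_i$ of the special fibre: indeed the special fibre is a principal divisor (the divisor of the uniformizer), so $\sum_i a_i [Y_i] = 0$ in $\CH_1(\mathfrak{X})$ for suitable multiplicities $a_i$ (all equal to $1$ in the strictly semi-stable case), and more importantly every one-cycle on $\mathfrak{X}$ restricts on $X$ to something rationally equivalent to a cycle supported near $Y$. Concretely, $\CH_0(X) \cong \mathrm{coker}\bigl(\bigoplus_i \CH_1(Y_i) \oplus \CH_1(Y) \to \bigoplus_i \CH_0(Y_i)\bigr)$ is exactly the cohomology of the complex \eqref{eq:complex} in the middle, so \Cref{thm:main} (which applies since $k = \overline{k}$, noting $L/k$ can be taken trivial here) says this cohomology vanishes in characteristic $0$. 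This already gives part (i).

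In positive characteristic the same reasoning shows the complex \eqref{eq:complex} is exact after tensoring with $\Z/l$ for every $l$ coprime to $p$ — this is the ``modulo $m$'' consequence of \Cref{thm:main} noted after its statement, specialized to $m = l$ — which yields $\deg \colon \CH_0(X)/l \to \Z/l$ an isomorphism, hence part (ii). To package both cases into the single statement about inverting the exponential characteristic, I would argue that $\Azero(X) := \ker(\deg)$ is a torsion group: by Saito--Sato \cite{SS10}, the étale cycle class map $\CH_0(X) \to H^{2d}_{\mathrm{\acute et}}(X, \widehat{\Z}(d))$ (or its appropriate Borel--Moore/arithmetic variant over the henselian $K$) is injective on torsion-prime-to-$p$ information and controls $\Azero(X)$; combined with the exactness modulo every prime $l \neq p$ one concludes that $\Azero(X)$ is uniquely $l$-divisible for all $l \neq p$ and $p$-torsion-free-up-to-$p$, i.e. $\Azero(X)[1/p] = 0$, so $\deg$ is an isomorphism up to $p$-torsion.

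The main obstacle I anticipate is the precise bookkeeping in the positive-characteristic case: \Cref{thm:main} gives exactness of \eqref{eq:complex} over $k$ itself, and tensoring with $\Z/l$ is harmless by right-exactness, but to conclude that $\CH_0(X)$ (rather than merely $\CH_0(X)/l$) is controlled one genuinely needs an input like Saito--Sato's bijectivity of the étale cycle class map to rule out a large divisible or infinitely $p$-primary part of $\Azero(X)$; getting the statement of \cite{SS10} applied in the correct generality (finiteness of $\Azero$, behaviour over a henselian excellent base, the role of excellence for resolution/alteration inputs) is the delicate step. The reduction to the complete case and the verification that strict semi-stability and geometric retract rationality are preserved under completion are routine and I would treat them briefly.
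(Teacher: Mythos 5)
Your overall plan — Fulton's localization sequence, input from \Cref{cor:mainresult2}, and Saito--Sato — is the right one and matches the paper, but the way you try to execute it has a genuine gap in the middle.

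You claim that $\CH_0(X) \cong \operatorname{coker}\bigl(\bigoplus_i \CH_1(Y_i) \oplus \CH_1(Y) \to \bigoplus_i \CH_0(Y_i)\bigr)$ is ``exactly the cohomology of the complex \eqref{eq:complex} in the middle,'' and that this \emph{integral} identification together with the exactness from \Cref{thm:main} ``already gives part (i).'' No such integral identification exists. The localization sequence produces $\CH_0(X) \cong \operatorname{coker}(\CH_1(Y) \to \CH_1(\mathfrak{X}))$; turning the target $\CH_1(\mathfrak{X})$ into $\bigoplus_i \CH_0(Y_i)$ (or rather into $\bigl(\bigoplus_i \Z\cdot[Y_i]\bigr)^\vee$) is precisely what Saito--Sato provides, and it only provides it \emph{modulo $l$}: the paper uses \cite[Corollary 0.9]{SS10} to get an isomorphism $\CH_1(\mathfrak{X})/l \xrightarrow{\,\sim\,} \bigl(\bigoplus_i \Z/l\cdot[Y_i]\bigr)^\vee$ for each $l$ prime to $p$, and then a diagram chase shows $\deg\colon \CH_0(X)/l \to \Z/l$ is injective. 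There is no shortcut in characteristic $0$; one still needs to run the mod-$l$ argument for every prime $l$ and then combine with the torsion statement for $\Azero(X)$ to conclude $\Azero(X)=0$. So the logical structure you need even for part (i) is: (a) $\Azero(X)$ is bounded torsion, and (b) $\Azero(X)/l = 0$ for all $l$ invertible in $k$; together these give $\Azero(X)[1/p]=0$.

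A second, related slip: you attribute the torsion of $\Azero(X)$ to Saito--Sato. It actually comes from the decomposition of the diagonal. Geometric retract rationality gives an integral decomposition of $\Delta_{\overline{X}}$, hence after a finite extension $F/K$, and a pull--push along $\Spec F \to \Spec K$ produces a \emph{rational} decomposition of the diagonal on $X$ itself, which forces $\Azero(X)$ to be $N$-torsion with $N = [F:K]$ (as in \cite[proof of Lemma~1.3]{ACTP}). Saito--Sato's role is the divisibility step (b), not the torsion step (a). You should also not forget the surjectivity of $\deg$, which in the paper is handled by producing a degree-one zero-cycle on $X$ via a $k$-point on the smooth locus of $Y$ and Hensel's lemma (here excellence plus henselianity of $R$ and $k=\overline{k}$ are used); ``isomorphism up to inverting $p$'' needs both surjectivity and injectivity-up-to-$p$-torsion of $\deg$.
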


\begin{proof}
    We first note that the degree map is surjective. Indeed, let $\mathfrak{X} \to \Spec R$ be a strictly semi-stable projective $R$-scheme with generic fibre $X$. Since $k$ is algebraically closed, there exists a $k$-rational point in the smooth locus of the special fibre. By Hensel's lemma we can lift this $k$-point to a section of $\mathfrak{X} \to \Spec R$, see e.g. \cite[Theorem 18.5.17]{EGAIV.4}. The intersection of this section with $X$ yields a zero-cycle of degree $1$ on $X$, i.e. the degree map is surjective.
    
    Since $X$ is geometrically retract rational, the base-change $\overline{X}$ to the algebraic closure $\Bar{K}$ admits an (integral) decomposition of the diagonal, see e.g. \cite[Lemma 7.5]{Sch21}. This decomposition of the diagonal holds after some finite field extension $F/K$. By a pull/push argument, we find that $X$ admits a rational decomposition of the diagonal. Hence the kernel of the degree map $$
        \Azero(X) := \ker \left(\CH_0(X) \to \Z \right)
    $$ is $N$-torsion for some $N \in \Z_{\geq 1}$, in fact one can choose $N = [F:k((t))]$, see e.g. \cite[proof of Lemma 1.3]{ACTP}.

    Let $Y$ denote the special fibre of the strictly semi-stable family $\mathfrak X \to \Spec R$ and let $Y_i$ denote the irreducible components of $Y$ with $i \in I$. Consider Fulton's localization exact sequence
    $$
        \CH_1(Y) \longrightarrow \CH_1(\mathfrak{X}) \longrightarrow \CH_0(X) \longrightarrow 0,
    $$
    see \cite[Section 4.4]{Ful75}. After tensoring this sequence with $\Z/l$ for some $l \in \Z_{\geq 0}$ coprime to $p = \charac k$, this sequence fits by \cite[Corollary 0.9]{SS10} into the commutative diagram
    \begin{equation}\label{eq:commdiag}
        \begin{tikzcd}
            \CH_1(Y)/l \arrow[r] \arrow[d,equal] & \CH_1(\mathfrak{X})/l \arrow[r] \arrow[d,"\cong"] & \CH_0(X)/l \arrow[d,"\deg"] \arrow[r] & 0 \\
            \CH_1(Y)/l \arrow[r,"\deg_i \circ \Phi_{\mathfrak{X}}"] & \left(\bigoplus\limits_{i \in I} \Z/l \cdot [Y_i]\right)^\vee \arrow[r,"\deg"] & \Z/l \arrow[r] & 0.
        \end{tikzcd}
    \end{equation}
    The bottom row is exact by \Cref{cor:mainresult2}. Indeed, consider the commutative diagram
    $$
        \begin{tikzcd}
            \CH_1(Y)/l \arrow[r,"\Phi_{\mathfrak{X}}"] \arrow[d,equal] & \bigoplus\limits_{i \in I}  \CH_0(Y_i)/l \arrow[r,"\deg"] \arrow[d,"\deg_i"] & \Z/l \arrow[r] \arrow[d,equal] & 0 \\
            \CH_1(Y)/l \arrow[r,"\deg_i \circ \Phi_{\mathfrak{X}}"] & \left(\bigoplus\limits_{i \in I} \Z/l \cdot [Y_i]\right)^\vee \arrow[r,"\deg"] & \Z/l \arrow[r] & 0
        \end{tikzcd}
    $$
    Since $k$ is algebraically closed, the middle arrow is surjective. The top row is exact by \Cref{cor:mainresult2}. Hence the bottom row is exact by a simple diagram chase.

    Thus, the right arrow in \eqref{eq:commdiag} is injective, in particular $\Azero(X)$ is divisible by $l$ for each $l$ coprime to $p = \charac k$. Since $\Azero(X)$ is torsion by the existence of a rational decomposition of the diagonal, the corollary follows.
\end{proof}

\begin{appendix}
\section{Two concrete resolutions}\label{appendix}
We illustrate the proof of the key proposition (\Cref{prop:key}) in two concrete examples by spelling out the constructions and arguments in \Cref{sec:CH1resolution,sec:analysis}. Recall the general setup from \Cref{setup}: We consider a strictly semi-stable family $\mathfrak X \to \Spec R$ over a dvr with special fibre $Y$ such that the irreducible components of $Y$ are geometrically integral. Let $\Tilde{\mathfrak X} \to \Spec \Tilde{R}$ be a resolution from \Cref{prop:Hartlresolution} after a finite extension $\Tilde{R}/R$ of dvr's of ramification index $r$ with induced extension $L/k$ of residue fields. We denote the special fibre of $\Tilde{\mathfrak X} \to \Spec \Tilde{R}$ by $\Tilde{Y}$ and the natural morphism by $q \colon \Tilde{Y} \to Y$.

\begin{example}\label{example:a1}
    Assume that $Y$ is a chain of three Cartier divisors and denote the three irreducible components by $Y_1,Y_2,Y_3$. The simplicial complex $\mathcal{C}_{\mathfrak X}$ associated to $\mathfrak X$ from \Cref{def:associatedcomplex} is the following
    $$
        \begin{tikzpicture}
            \filldraw[black] (0,0) circle (1pt) node[anchor=east]{$1$};
            \filldraw[black] (2,2) circle (1pt) node[anchor=south]{$2$};
            \filldraw[black] (4,0) circle (1pt) node[anchor=west]{$3$};
            \draw[-] (4,0) -- (2,2);
            \draw[-] (2,2) -- (0,0);
        \end{tikzpicture}
    $$
    The refinement from \Cref{const:refinement} is then given in the case $r = 4$ by
    $$
        \begin{tikzpicture}
        \begin{scope}[xshift = -7.5cm,every node/.style={scale=0.7}]
            \filldraw[black] (0,0) circle (1pt) node[anchor=east]{$(4,0,0)$};
            \filldraw[black] (2,2) circle (1pt) node[anchor=south]{$(0,4,0)$};
            \filldraw[black] (4,0) circle (1pt) node[anchor=west]{$(0,0,4)$};
            \filldraw[black] (0.5,0.5) circle (1pt) node[anchor=east]{$(3,1,0)$};
            \filldraw[black] (1,1) circle (1pt) node[anchor=east]{$(2,2,0)$};
            \filldraw[black] (1.5,1.5) circle (1pt) node[anchor=east]{$(1,3,0)$};
            \filldraw[black] (2.5,1.5) circle (1pt) node[anchor=west]{$(0,3,1)$};
            \filldraw[black] (3,1) circle (1pt) node[anchor=west]{$(0,2,2)$};
            \filldraw[black] (3.5,0.5) circle (1pt) node[anchor=west]{$(0,1,3)$};
            \draw[-] (4,0) -- (2,2);
            \draw[-] (2,2) -- (0,0);
        \end{scope}
        \draw[->] (-3,1) -- node[anchor=south]{$\psi$} (-0.5,1);
        \begin{scope}
            \filldraw[black] (0,0) circle (1pt) node[anchor=east]{$1$};
            \filldraw[black] (2,2) circle (1pt) node[anchor=south]{$2$};
            \filldraw[black] (4,0) circle (1pt) node[anchor=west]{$3$};
            \draw[-] (4,0) -- (2,2);
            \draw[-] (2,2) -- (0,0);
        \end{scope}
        \end{tikzpicture}
    $$
    Recall that the refinement $\psi \colon \mathcal{C}_{\Tilde{\mathfrak{X}}} \to \mathcal{C}_{\mathfrak{X}}$ corresponds to a resolution $\Tilde{\mathfrak{X}} \to \Spec \Tilde{R}$ of $\mathfrak{X}_{\Tilde{R}} \to \Spec \Tilde{R}$. The irreducible components of the special fibre of $\Tilde{\mathfrak{X}} \to \Spec \Tilde{R}$ correspond to the vertices $(i_1,i_2,i_3) \in \N^3$ of $\mathcal{C}_{\Tilde{\mathfrak X}}$: The components $\Tilde{Y}_{(4,0,0)}$, $\Tilde{Y}_{(0,4,0)}$, and $\Tilde{Y}_{(0,0,4)}$ are isomorphic to $(Y_1)_L$, $(Y_2)_L$ and $(Y_3)_L$, respectively. The component $\Tilde{Y}_{(i,_1,i_2,i_3)}$ corresponding to the vertex $(i_1,i_2,i_3)$ with $i_2, i_1+i_3 \neq 0$ in the above picture is a (Zariski) locally trivial $\proj^1$-bundle over $(Y_j \cap Y_2)_L$ where $j \in \{1,3\}$ such that $i_j \neq 0$.

    By \Cref{prop:CH1}, we have a surjection,
    \begin{equation*}
        \begin{aligned}
            f \colon \bigoplus\limits_{j=1}^3 \CH_1(\left(Y_j\right)_L) \oplus \bigoplus\limits_{\substack{i_1,i_2 \geq 1 \\ i_1 + i_2 = 4}} \CH_0(\left(Y_1 \cap Y_2\right)_L) \oplus \bigoplus\limits_{\substack{j_2,j_3 \geq 1 \\ j_2 + j_3 = 4}} \CH_0(\left(Y_2 \cap Y_3\right)_L) \longrightarrow \CH_1(\Tilde{Y}), \\
            (\gamma_1,\gamma_2,\gamma_3,\alpha_{(i_1,i_2,0)},\alpha_{(0,j_2,j_3)}) \mapsto
            \gamma_1 + \gamma_2 +  \gamma_3 + \sum\limits_{\substack{i_1,i_2 \geq 1 \\ i_1 + i_2 = 4}}  q_{(i_1,i_2,0)}^\ast \alpha_{(i_1,i_2,0)} + \sum\limits_{\substack{j_2,j_3 \geq 1 \\ j_2 + j_3 = 4}} q_{(0,j_2,j_3)}^\ast \alpha_{(0,j_2,j_3)}
        \end{aligned}
    \end{equation*}
    where $q_{(i_1,i_2,0)} \colon \Tilde{Y}_{(i_1,i_2,0)} \to \left(Y_1 \cap Y_2\right)_L$ and $q_{(0,j_2,j_3)} \colon \Tilde{Y}_{(0,j_2,j_3)} \to \left(Y_2 \cap Y_3\right)_L$ are the natural (flat) projections of the projective bundles. For simplicity we left out the push-forwards along the natural inclusion of the irreducible components to $\Tilde{Y}$. The distance function $d$ from \Cref{const:distance} can be read of the indices of the components as follows:
    $$
        d((i_1,i_2,i_3),(4,0,0)) = 4 - i_1, \quad d((i_1,i_2,i_3),(0,4,0)) = 4 - i_2, \quad d((i_1,i_2,i_3),(0,0,4)) = 4 - i_3.  
    $$
    Note that the relative walls in this examples are the new vertices $(i_1,i_2,0)$ and $(0,j_2,j_3)$ with $i_1,\ i_2,\ j_2,\ j_3 > 0$, i.e. we denote the relative walls by the vertex. Then the function $I_{\tau}(\Tilde{v})$ from \Cref{const:intersectionnumber} is given for $\tau = (2,2,0)$ by
    $$
        \begin{aligned}
            I_{(2,2,0)}((i_1,i_2,i_3)) = \begin{cases}
                1 & \text{if } (i_1,i_2,i_3) = (1,3,0) \text{ or } (3,1,0), \\
                -2 & \text{if } (i_1,i_2,i_3) = (2,2,0), \\
                0 & \text{otherwise}
            \end{cases}
        \end{aligned}
    $$
    and similarly for the other relative walls. 
    
    The explicit description of the morphism $\Phi_{\Tilde{\mathfrak X}}$ from \Cref{lem:phiexplicit} is given as follows
    $$
        \begin{aligned}
            \Phi_{{(4,0,0)}}(\gamma) &= -\restr{\gamma_1}_{\left(Y_1 \cap Y_2\right)_L} + \alpha_{(3,1,0)} &&\in \CH_0(\Tilde{Y}_{(4,0,0)}),\\
            \Phi_{{(0,4,0)}}(\gamma) &= -\restr{\gamma_2}_{\left(Y_1 \cap Y_2\right)_L} -\restr{\gamma_2}_{\left(Y_2 \cap Y_3\right)_L} + \alpha_{(0,3,1)} + \alpha_{(1,3,0)} &&\in \CH_0(\Tilde{Y}_{(0,4,0)}),\\
            \Phi_{{(0,0,4)}}(\gamma) &= -\restr{\gamma_3}_{\left(Y_2 \cap Y_3\right)_L} + \alpha_{(0,1,3)} &&\in \CH_0(\Tilde{Y}_{(0,0,4)}), \\
            \Phi_{{(3,1,0)}}(\gamma) &= \restr{\gamma_1}_{\left(Y_1 \cap Y_2\right)_L} + \alpha_{(2,2,0)} - 2 \alpha_{(3,1,0)} &&\in \CH_0(\Tilde{Y}_{(3,1,0)}),\\
            \Phi_{{(2,2,0)}}(\gamma) &= \alpha_{(1,3,0)} + \alpha_{(3,1,0)} - 2\alpha_{(2,2,0)} &&\in \CH_0(\Tilde{Y}_{(2,2,0)}),\\
            \Phi_{{(1,3,0)}}(\gamma) &= \restr{\gamma_2}_{\left(Y_1 \cap Y_2\right)_L} + \alpha_{(2,2,0)} - 2 \alpha_{(1,3,0)} &&\in \CH_0(\Tilde{Y}_{(1,3,0)}),\\
            \Phi_{{(0,3,1)}}(\gamma) &= \restr{\gamma_2}_{\left(Y_2 \cap Y_3\right)_L} + \alpha_{(0,2,2)} - 2 \alpha_{(0,3,1)} &&\in \CH_0(\Tilde{Y}_{(0,3,1)}), \\
            \Phi_{{(0,2,2)}}(\gamma) &= \alpha_{(0,3,1)} + \alpha_{(0,1,3)} -2 \alpha_{(0,2,2)} &&\in \CH_0(\Tilde{Y}_{(0,2,2)}), \\
            \Phi_{{(0,1,3)}}(\gamma) &= \restr{\gamma_3}_{\left(Y_2 \cap Y_3\right)_L} + \alpha_{(0,2,2)} - 2 \alpha_{(0,1,3)} &&\in \CH_0(\Tilde{Y}_{(0,1,3)}),
        \end{aligned}
    $$
    where $\Phi_{(i_1,i_2,i_3)} = \Phi_{\Tilde{\mathfrak X},\Tilde{Y}_{(i_1,i_2,i_3)}}$ and $\gamma = f\left(\gamma_1,\gamma_2,\gamma_3,\alpha_{(i_1,i_2,0)},\alpha_{(0,j_2,j_3)}\right) \in \CH_1(\Tilde{Y})$. 
    
    The key lemma (\Cref{prop:key}) reads in this example as follows: 
    $$
        \begin{aligned}
            \Phi_{\mathfrak X,Y_1}(q_\ast \gamma) &= \sum\limits_{i_1 = 1}^4 i_1 \cdot q_\ast \Phi_{{(i_1,4-i_1,0)}}(\gamma), \\
            \Phi_{\mathfrak X,Y_2}(q_\ast \gamma) &= 4 q_\ast \Phi_{{(0,4,0)}}(\gamma) + \sum\limits_{i_2 = 1}^3 i_2 \cdot q_\ast \Phi_{{(4-i_2,i_2,0)}}(\gamma) + i_2 \cdot q_\ast \Phi_{{(0,i_2,4-i_2)}}(\gamma), \\
            \Phi_{\mathfrak X,Y_3}(q_\ast \gamma) &= \sum\limits_{j_3 = 1}^4 j_3 \cdot q_\ast \Phi_{{(0,4-j_3,j_3)}}(\gamma),
        \end{aligned}
    $$
    where we again leave out the push-forwards along the natural inclusions. This can be checked directly from the above description of $\Phi_{\Tilde{\mathfrak X}}$ together with the observations
    $$
        \restr{(q_\ast\gamma_i)}_{Y_i \cap Y_j} = q_\ast \left(\restr{\gamma_i}_{(Y_i \cap Y_j)_L}\right), \quad q_\ast q^\ast_{(i_1,i_2,i_3)}\alpha_{(i_1,i_2,i_3)} = 0,
    $$
    see \cite[Proposition 2.3 (c)]{Ful98} for the first claim. Note that the explicit description of $\Phi_{\mathfrak X}$ in \Cref{lem:phiexplicit} reads in this example
    $$
        \begin{aligned}
            \Phi_{\mathfrak X,Y_1}(q_\ast \gamma) &= - \restr{\left(q_\ast \gamma_1\right)}_{Y_1 \cap Y_2} + \restr{\left(q_\ast \gamma_2\right)}_{Y_1 \cap Y_2}, \\
            \Phi_{\mathfrak X,Y_2}(q_\ast \gamma) &= - \restr{\left(q_\ast \gamma_2\right)}_{Y_1 \cap Y_2} - \restr{\left(q_\ast \gamma_2\right)}_{Y_2 \cap Y_3} + \restr{\left(q_\ast \gamma_1\right)}_{Y_1 \cap Y_2} + \restr{\left(q_\ast \gamma_3\right)}_{Y_2 \cap Y_3}, \\
            \Phi_{\mathfrak X,Y_3}(q_\ast \gamma) &= - \restr{\left(q_\ast \gamma_3\right)}_{Y_2 \cap Y_3} + \restr{\left(q_\ast \gamma_2\right)}_{Y_2 \cap Y_3},
        \end{aligned}
    $$
    where we left out again the push forward along the natural inclusions.
\end{example}

\begin{example}
    Assume that $Y$ is a complete intersection of three Cartier divisors $Y_1$, $Y_2$, and $Y_3$, i.e. the simplicial complex $\mathcal{C}_{\mathfrak X}$ associated to $\mathfrak X$ is the standard $2$-simplex:
    $$
        \begin{tikzpicture}
            \filldraw[black] (0,0) circle (1pt) node[anchor=east]{$1$};
            \filldraw[black] (2,2) circle (1pt) node[anchor=south]{$2$};
            \filldraw[black] (4,0) circle (1pt) node[anchor=west]{$3$};
            \draw[-] (4,0) -- (2,2);
            \draw[-] (2,2) -- (0,0);
            \draw[-] (0,0) -- (4,0);
            \draw[-] (1.5,0.5) -- (1.9,1);
            \draw[-] (1.8,0.5) -- (2.2,1);
            \draw[-] (2.1,0.5) -- (2.5,1);
        \end{tikzpicture}
    $$
    For $r = 3$, a possible refinement looks like:
    $$
        \begin{tikzpicture}
            \begin{scope}[xshift = -7.5cm,every node/.style={scale=0.7}]
            \filldraw[black] (0,0) circle (1pt) node[anchor=east]{$(3,0,0)$};
            \filldraw[black] (0.67,0.67) circle (1pt) node[anchor=east]{$(2,1,0)$};
            \filldraw[black] (1.33,1.33) circle (1pt) node[anchor=east]{$(1,2,0)$};
            \filldraw[black] (2,2) circle (1pt) node[anchor=south]{$(0,3,0)$};
            \filldraw[black] (1.33,0) circle (1pt) node[anchor=north]{$(2,0,1)$};
            \filldraw[black] (2.67,0) circle (1pt) node[anchor=north]{$(1,0,2)$};
            \filldraw[black] (4,0) circle (1pt) node[anchor=west]{$(0,0,3)$};
            \filldraw[black] (2.67,1.33) circle (1pt) node[anchor=west]{$(0,2,1)$};
            \filldraw[black] (3.33,0.67) circle (1pt) node[anchor=west]{$(0,1,2)$};
            \filldraw[black] (2,0.67) circle (1pt) node[anchor=north west]{$(1,1,1)$};
            \draw[-] (4,0) -- (2,2);
            \draw[-] (2,2) -- (0,0);
            \draw[-] (0,0) -- (4,0);
            \draw[-] (0.67,0.67) -- (3.33,0.67);
            \draw[-] (1.33,1.33) -- (2.67,1.33);
            \draw[-] (0.67,0.67) -- (1.33,0);
            \draw[-] (1.33,1.33) -- (2.67,0);
            \draw[-] (3.33,0.67) -- (2.67,0);
            \draw[-] (2.67,1.33) -- (1.33,0);
            \end{scope}
            \draw[->] (-3,1) -- node[anchor=south]{$\psi$} (-0.5,1);
            \begin{scope}
            \filldraw[black] (0,0) circle (1pt) node[anchor=east]{$1$};
            \filldraw[black] (2,2) circle (1pt) node[anchor=south]{$2$};
            \filldraw[black] (4,0) circle (1pt) node[anchor=west]{$3$};
            \draw[-] (4,0) -- (2,2);
            \draw[-] (2,2) -- (0,0);
            \draw[-] (0,0) -- (4,0);
            \draw[-] (1.5,0.5) -- (1.9,1);
            \draw[-] (1.8,0.5) -- (2.2,1);
            \draw[-] (2.1,0.5) -- (2.5,1);
        \end{scope}
        \end{tikzpicture}
    $$
    This refinement can be obtained by the blow-ups along the components corresponding to the vertices $(3,0,0),(3,0,0),(0,3,0),(1,2,0),(0,3,0),(1,2,0),(2,1,0)$ in that order. The component $\Tilde{Y}_{(1,1,1)}$ corresponding to the central vertex $(1,1,1)$ in the picture is a Zariski locally trivial $\proj^1 \times \proj^1$-bundle blown up along two disjoint sections. The components indexed by $(3,0,0),\ (0,3,0),$ and $(0,0,3)$ are isomorphic to $(Y_1)_L$, $(Y_2)_L$, and $(Y_3)_L$, respectively. The other components are blow-ups of a Zariski locally trivial $\proj^1$-bundle over the intersection $(Y_i \cap Y_j)_L$ of two components of $Y$ along the smooth locus $(Y_1 \cap Y_2 \cap Y_3)_L$ viewed via a section as a subvariety in the projective bundle. There are two types of relative walls which are depicted below in two different colors:
    $$
        \begin{tikzpicture}
            \begin{scope}[xshift = -7.5cm,every node/.style={scale=0.7}]
            \draw[-] (4,0) -- (2,2);
            \draw[-] (2,2) -- (0,0);
            \draw[-] (0,0) -- (4,0);
            \draw[-,green] (0.67,0.67) -- node[anchor=east,black]{$\tau'_{1}$} (1.33,0);
            \draw[-,green] (1.33,1.33) -- node[anchor=south,black]{$\tau'_2$} (2.67,1.33);
            \draw[-,green] (3.33,0.67) -- node[anchor=west,black]{$\tau'_3$} (2.67,0);
            \draw[-,green] (0.67,0.67) -- (2,0.67);
            \draw[->] (0.5,1) node[anchor=east,black]{$\tau'_{(2,1,0)}$} to[out=0,in=90] (1.33,0.67);
            \draw[-,green] (1.33,1.33) -- (2,0.67);
            \draw[->] (1.3,1.7) node[anchor=east,black]{$\tau'_{(1,2,0)}$} to[out=-45,in=45] (1.7,1);
            \draw[-,green] (2,0.67) -- (1.33,0);
            \draw[->] (1.6,-0.5) node[anchor=east,black]{$\tau'_{(2,0,1)}$} to[out=0,in=-45] (1.7,0.33);
            \draw[-,green] (2,0.67) -- (2.67,0);
            \draw[->] (2.3,-0.5) node[anchor=west,black]{$\tau'_{(1,0,2)}$} to[out=-180,in=225] (2.3,0.33);
            \draw[-,green] (2.67,1.33) -- (2,0.67);
            \draw[->] (2.6,1.7) node[anchor=west,black]{$\tau'_{(0,2,1)}$} to[out=-135,in=135] (2.3,1);
            \draw[-,green] (2,0.67) -- (3.33,0.67);
            \draw[->] (3.5,1) node[anchor=west,black]{$\tau'_{(0,1,2)}$} to[out=-180,in=90] (2.67,0.67);
            \filldraw[black] (0,0) circle (1pt);
            \filldraw[blue] (0.67,0.67) circle (1pt) node[anchor=east]{$\tau_{(2,1,0)}$};
            \filldraw[blue] (1.33,1.33) circle (1pt) node[anchor=east]{$\tau_{(1,2,0)}$};
            \filldraw[black] (2,2) circle (1pt);
            \filldraw[blue] (1.33,0) circle (1pt) node[anchor=north]{$\tau_{(2,0,1)}$};
            \filldraw[blue] (2.67,0) circle (1pt) node[anchor=north]{$\tau_{(1,0,2)}$};
            \filldraw[black] (4,0) circle (1pt);
            \filldraw[blue] (2.67,1.33) circle (1pt) node[anchor=west]{$\tau_{(0,2,1)}$};
            \filldraw[blue] (3.33,0.67) circle (1pt) node[anchor=west]{$\tau_{(0,1,2)}$};
            \filldraw[black] (2,0.67) circle (1pt);
            \end{scope}
            \draw[->] (-3,1) -- node[anchor=south]{$\psi$} (-0.5,1);
            \begin{scope}
            \filldraw[black] (0,0) circle (1pt); 
            \filldraw[black] (2,2) circle (1pt);
            \filldraw[black] (4,0) circle (1pt);
            \draw[-] (4,0) -- (2,2);
            \draw[-] (2,2) -- (0,0);
            \draw[-] (0,0) -- (4,0);
            \draw[-] (1.5,0.5) -- (1.9,1);
            \draw[-] (1.8,0.5) -- (2.2,1);
            \draw[-] (2.1,0.5) -- (2.5,1);
        \end{scope}
        \end{tikzpicture}
    $$
    To simplify notation, we consider the following action of the symmetric group $S_3$ on the vertices: $\sigma(i_1,i_2,i_3) = (i_{\sigma(1)},i_{\sigma(2)},i_{\sigma(3)})$ for every $(i_1,i_2,i_3) \in \N_0^3$. By \Cref{prop:CH1}, we can write any one-cycle $\gamma \in \CH_1(\Tilde{Y})$ as
    \begin{equation*}
        \gamma = \gamma_1 + \gamma_2 + \gamma_3 + (q'_1)^\ast \alpha'_{1} + (q'_2)^\ast \alpha'_2 + (q'_3)^\ast \alpha'_3 + \sum\limits_{\sigma \in S_3} \left(q_{\sigma(2,1,0)}\right)^\ast \alpha_{\sigma(2,1,0)} + \left(q'_{\sigma(2,1,0)}\right)^\ast \alpha'_{\sigma(2,1,0)}
    \end{equation*}
    in $\CH_1(\Tilde{Y})$ for some $\gamma_i \in \CH_1((Y_i)_L)$, $\alpha_{(i_1,i_2,i_3)} \in \CH_0(\Tilde{Y}_{(i_1,i_2,i_3)})$, and $\alpha_1,\alpha_2,\alpha_3, \alpha'_{(i_1,i_2,i_3)} \in \CH_0((Y_1 \cap Y_2 \cap Y_3)_L)$, where the zero-cycles $\alpha_{\dots}$ and $\alpha'_{\dots}$ correspond to the respective relative wall in the picture above and the morphisms $q_{\dots}$ and $q'_{\dots}$ are the natural projection of the associated $\proj^1$-bundle. Note that we leave out the push-forwards along the respective natural inclusions.

    As in \Cref{example:a1}, the distance function $d$ from \Cref{const:distance} can be read of the indices of the components as follows:
    $$
        d((i_1,i_2,i_3),(4,0,0)) = 4 - i_1, \quad d((i_1,i_2,i_3),(0,4,0)) = 4 - i_2, \quad d((i_1,i_2,i_3),(0,0,4)) = 4 - i_3.  
    $$
    We write down the function $I$ representing the intersection numbers by the matrix
    $$
        \left(\begin{array}{*{15}c}
            1 & 0 & 1 & 0 & 0 & 0 & 1 & 0 & 0 & 0 & 0 & 0 & 0 & 0 & 0 \\
            0 & 1 & 0 & 0 & 1 & 0 & 0 & 1 & 0 & 0 & 0 & 0 & 0 & 0 & 0 \\
            0 & 0 & 0 & 1 & 0 & 1 & 0 & 0 & 1 & 0 & 0 & 0 & 0 & 0 & 0 \\
            -2 & 1 & 0 & 0 & 0 & 0 & -1 & 0 & 0 & -1 & 1 & 1 & 0 & 0 & 0 \\
            1 & -2 & 0 & 0 & 0 & 0 & 0 & -1 & 0 & 1 & -1 & 0 & 0 & 1 & 0 \\
            0 & 0 & -2 & 1 & 0 & 0 & -1 & 0 & 0 & 1 & 0 & -1 & 1 & 0 & 0 \\
            0 & 0 & 1 & -2 & 0 & 0 & 0 & 0 & -1 & 0 & 0 & 1 & -1 & 0 & 1 \\
            0 & 0 & 0 & 0 & -2 & 1 & 0 & -1 & 0 & 0 & 1 & 0 & 0 & -1 & 1 \\
            0 & 0 & 0 & 0 & 1 & -2 & 0 & 0 & -1 & 0 & 0 & 0 & 1 & 1 & -1 \\
            0 & 0 & 0 & 0 & 0 & 0 & 1 & 1 & 1 & -1 & -1 & -1 & -1 & -1 & -1
        \end{array}\right)
    $$
    where the columns are labelled in the following order:
    $$
        \tau_{(2,1,0)}, \tau_{(1,2,0)}, \tau_{(2,0,1)}, \tau_{(1,0,2)}, \tau_{(0,2,1)}, \tau_{(0,1,2)}, \tau'_1, \tau'_2, \tau'_3, \tau'_{(2,1,0)}, \tau'_{(1,2,0)}, \tau'_{(2,0,1)}, \tau'_{(1,0,2)}, \tau'_{(0,2,1)}, \tau'_{(0,1,2)},
    $$
    and the rows represent the component $\Tilde{Y}_{(i_1,i_2,i_3)}$ in the following order
    $$
        (3,0,0), (0,3,0), (0,0,3), (2,1,0), (1,2,0), (2,0,1), (1,0,2), (0,2,1), (0,1,2), (1,1,1). 
    $$
    For simplicity, we denote the map $\Phi_{\Tilde{\mathfrak{X}},\Tilde{Y}_{(i_1,i_2,i_3)}}$ by $\Phi_{(i_1,i_2,i_3)}$. Then, for $\gamma \in \CH_1(\Tilde{Y})$ of the above form the map $\Phi_{\Tilde{\mathfrak X}}$ is given
    $$
        \begin{aligned}
            \Phi_{(3,0,0)}(\gamma) &= -\restr{\gamma_1}_{(Y_1 \cap Y_2)_L} - \restr{\gamma_1}_{(Y_1 \cap Y_3)_L} + \alpha_{(2,1,0)} + \alpha_{(2,0,1)} + \alpha'_1 &&\in \CH_0(\Tilde{Y}_{(3,0,0)}), \\
            \Phi_{(0,3,0)}(\gamma) &= -\restr{\gamma_2}_{(Y_1 \cap Y_2)_L} - \restr{\gamma_2}_{(Y_2 \cap Y_3)_L} + \alpha_{(1,2,0)} + \alpha_{(0,2,1)} + \alpha_2' &&\in \CH_0(\Tilde{Y}_{(0,3,0)}), \\
            \Phi_{(0,0,3)}(\gamma) &= -\restr{\gamma_3}_{(Y_1 \cap Y_3)_L} - \restr{\gamma_3}_{(Y_2 \cap Y_3)_L} + \alpha_{(1,0,2)} + \alpha_{(0,1,2)} + \alpha_3' &&\in \CH_0(\Tilde{Y}_{(0,0,3)}), \\
            \Phi_{(2,1,0)}(\gamma) &= \restr{\gamma_1}_{(Y_1 \cap Y_2)_L} -2 \alpha_{(2,1,0)} + \alpha_{(1,2,0)} - \alpha'_1 - \alpha'_{(2,1,0)} + \alpha'_{(1,2,0)} + \alpha'_{(2,0,1)} &&\in \CH_0(\Tilde{Y}_{(2,1,0)}), \\
            \Phi_{(1,2,0)}(\gamma) &= \restr{\gamma_2}_{(Y_1 \cap Y_2)_L} + \alpha_{(2,1,0)} - 2\alpha_{(1,2,0)} - \alpha'_2 + \alpha'_{(2,1,0)} - \alpha'_{(1,2,0)} + \alpha'_{(0,2,1)} &&\in \CH_0(\Tilde{Y}_{(1,2,0)}), \\
            \Phi_{(2,0,1)}(\gamma) &= \restr{\gamma_1}_{(Y_1 \cap Y_3)_L} -2\alpha_{(2,0,1)} + \alpha_{(1,0,2)} - \alpha'_1 + \alpha'_{(2,1,0)} - \alpha'_{(2,0,1)} + \alpha'_{(1,0,2)} &&\in \CH_0(\Tilde{Y}_{(2,0,1)}), \\
            \Phi_{(1,0,2)}(\gamma) &= \restr{\gamma_3}_{(Y_1 \cap Y_3)_L} + \alpha_{(2,0,1)} -2\alpha_{(1,0,2)} - \alpha'_3 + \alpha'_{(2,0,1)} - \alpha'_{(1,0,2)} + \alpha'_{(0,1,2)} &&\in \CH_0(\Tilde{Y}_{(1,0,2)}), \\
            \Phi_{(0,2,1)}(\gamma) &= \restr{\gamma_2}_{(Y_2 \cap Y_3)_L} - 2\alpha_{(0,2,1)} + \alpha_{(0,1,2)} - \alpha'_2 + \alpha'_{(1,2,0)} - \alpha'_{(0,2,1)} + \alpha'_{(0,1,2)} &&\in \CH_0(\Tilde{Y}_{(0,2,1)}), \\
            \Phi_{(0,1,2)}(\gamma) &= \restr{\gamma_3}_{(Y_2 \cap Y_3)_L} + \alpha_{(0,2,1)} - 2\alpha_{(0,1,2)} - \alpha'_3 + \alpha'_{(1,0,2)} + \alpha'_{(0,2,1)} - \alpha'_{(0,1,2)} &&\in \CH_0(\Tilde{Y}_{(0,1,2)}), \\
            \Phi_{(1,1,1)}(\gamma) &= \alpha'_1 + \alpha'_2 + \alpha'_3 - \sum\limits_{\sigma \in S_3} \alpha'_{\sigma(2,1,0)} &&\in \CH_0(\Tilde{Y}_{(1,1,1)}), \\
        \end{aligned}
    $$
    where we left out the push forwards along the natural inclusions. Note that $\CH_0(\Tilde{Y}_{(2,1,0)}) = \CH_0((Y_1 \cap Y_2)_L)$ as $\Tilde{Y}_{(2,1,0)}$ is a blow-up in smooth centers of a locally trivial $\proj^1$-bundle over $(Y_1 \cap Y_2)_L$ (and similarly for the other components), i.e. we view the zero-cyles $\alpha_{(1,2,0)}$ as a zero-cycle on $\Tilde{Y}_{(2,1,0)}$. The key lemma reads
    $$
        \begin{aligned}
            \sum\limits_{\substack{i_1,i_2,i_3 \\ i_1 > 0}} i_1 q_\ast \Phi_{(i_1,i_2,i_3)}(\gamma) &= - \restr{\left(q_\ast \gamma_1\right)}_{Y_1 \cap Y_2} - \restr{\left(q_\ast \gamma_1\right)}_{Y_1 \cap Y_3} + \restr{\left(q_\ast \gamma_2\right)}_{Y_1 \cap Y_2} + \restr{\left(q_\ast \gamma_3\right)}_{Y_1 \cap Y_3} = \Phi_{\mathfrak X,Y_1}(q_\ast \gamma), \\
            \sum\limits_{\substack{i_1,i_2,i_3 \\ i_2 > 0}} i_2 q_\ast \Phi_{(i_1,i_2,i_3)}(\gamma) &= - \restr{\left(q_\ast \gamma_2\right)}_{Y_1 \cap Y_2} - \restr{\left(q_\ast \gamma_2\right)}_{Y_2 \cap Y_3} + \restr{\left(q_\ast \gamma_1\right)}_{Y_1 \cap Y_2} + \restr{\left(q_\ast \gamma_3\right)}_{Y_2 \cap Y_3} = \Phi_{\mathfrak X,Y_2}(q_\ast \gamma), \\
            \sum\limits_{\substack{i_1,i_2,i_3 \\ i_3 > 0}} i_3 q_\ast \Phi_{(i_1,i_2,i_3)}(\gamma) &= - \restr{\left(q_\ast \gamma_3\right)}_{Y_1 \cap Y_3} - \restr{\left(q_\ast \gamma_3\right)}_{Y_2 \cap Y_3} + \restr{\left(q_\ast \gamma_1\right)}_{Y_1 \cap Y_3} + \restr{\left(q_\ast \gamma_2\right)}_{Y_2 \cap Y_3} = \Phi_{\mathfrak X,Y_3}(q_\ast \gamma),
        \end{aligned}
    $$
    which can be checked immediately by the above description together with the observations:
    $$
        q_\ast \gamma = q_\ast \gamma_1 + q_\ast \gamma_2 + q_\ast \gamma_3, \text{ and } \restr{(q_\ast(\gamma_i))}_{Y_i \cap Y_j} = q_\ast \left(\restr{\gamma_i}_{(Y_i \cap Y_j)_L}\right),
    $$
    where the latter is again \cite[Proposition 2.3 (c)]{Ful98}.
\end{example}
\end{appendix}

\printbibliography
\end{document}